\DeclarePairedDelimiterX\set[1]\{\}{#1}
\DeclarePairedDelimiterX\dual[2]{\langle}{\rangle}{#1,#2}
\DeclarePairedDelimiter\abs{\lvert}{\rvert}
\DeclarePairedDelimiter\norm{\lVert}{\rVert}
\DeclarePairedDelimiter\parens()
\newcommand\NN{\mathcal{N}}
\newcommand\RR{\mathcal{R}}
\newcommand\TT{\mathcal{T}}
\newcommand\R{\mathbb{R}}
\newcommand{\weakly}{\rightharpoonup}
\newcommand\Uad{U_{\mathrm{ad}}}
\newcommand\cl{\operatorname{cl}}
\renewcommand\d{\mathop{}\!\mathrm{d}}
\newtheorem{theorem}{Theorem}[section]
\newtheorem{lemma}[theorem]{Lemma}
\newtheorem{corollary}[theorem]{Corollary}
\newtheorem{remark}[theorem]{Remark}
\newtheorem{definition}[theorem]{Definition}
\newtheorem{example}[theorem]{Example}
\newtheorem{proposition}[theorem]{Proposition}
\numberwithin{equation}{section}
\begin{document}
\title[No-gap second-order conditions for additive manufacturing]{No-gap second-order optimality conditions for additive manufacturing}
\author{Hiba Hmede}
\author{Luc Paquet}
\author{Gerd Wachsmuth}

\address{Univ. Polytechnique  Hauts-de-France, INSA Hauts-de-France,  CERAMATHS Laboratoire de Mat\'eriaux C\'eramiques et de Math\'ematiques, F-59313 Valenciennes, France
and
Lebanese University, Faculty of sciences 1, Khawarizmi Laboratory of Mathematics and Applications-KALMA, Hadath, Beirut, Lebanon}
\address{Univ. Polytechnique  Hauts-de-France, INSA Hauts-de-France,  CERAMATHS Laboratoire de Mat\'eriaux C\'eramiques et de Math\'ematiques, F-59313 Valenciennes, France}
\address{Brandenburgische Technische Universit\"at Cottbus-Senftenberg, Institute of Mathematics, 03046 Cottbus, Germany}

\begin{abstract}
Additive manufacturing by laser fusion on a metal oxides powder bed has developed considerably in the last 
few years and allows to produce a wide range of complex parts. The mathematical 
models correspond to initial boundary value problems for the heat equation with moving 
heat sources according to the laser trajectories. The main questions concern the 
optimization of the trajectories scanned by the laser and of the thermal treatment time in order to melt the powder where it is desired to make the part and to minimize the thermal gradients. Our purpose in this current paper is to pursue the study of the optimization model that we have introduced in a previous paper. Here, we consider second-order optimality conditions for non-necessarily convex constraints on the laser paths. In particular, we obtain no gap between the second-order sufficient optimality condition and the necessary second-order optimality condition. To achieve this goal, we reformulate our optimal control problem in order to fit it in the framework of the abstract theory of optimization under constraints in Banach spaces. Higher regularity of the trajectories for local minimizers is also proved implying higher regularity of the corresponding Lagrange multipliers. The case of the regularity of the trajectories for stationary points is left open.
\end{abstract}
\keywords{Optimal control problems, parabolic equations, heat equation with moving heat sources, time of thermal treatment, existence of an optimal control, adjoint problem, first order necessary optimality conditions, abstract theory of optimization under constraints in Banach spaces, higher regularity of the trajectories for local minimizers, Lagrange multipliers, constraint qualifications, second-order optimality conditions, quadratic Legendre forms, polyhedric sets}
\maketitle

\section{\textbf{Introduction}}
\label{sec:intro}
Additive manufacturing is a method used to produce 3D objects from metal oxides
	powder, layer by layer, by applying  a Gaussian laser beam to each slice as a heat source to fuse
	together the powder.
	However, the main problem is that repeated and rapid heating and cooling cycles of the metal oxide powder during this {process} cause high temperature gradients that contribute to inner stresses and sometimes result in cracks in the fabricated part.
	
	As a possible answer to this problem, we have introduced  and studied in \cite{HNPW} a laser path optimization model aiming to design laser paths such that, during the thermal process, the temperature reaches a melting value across the structure to be built, while minimizing thermal gradients. In this model we also seek to adjust the temperature distribution in two time phases of length $r$ each, within the shortest global time $\tau$. The necessity of a preheating phase during the process of selective laser melting has been motivated by \cite{preheat} where the importance of this first phase is highlighted to reduce cracks and achieve higher density. Requirement about the shortest treatment time has been considered in the context of tumor treatment by cytotoxic drugs in \cite{Alsay}. Related problems but with fewer requirements have been considered in \cite{TMA,AlNiPa,MBois}. 
	In its full generality, we assume   $\Omega\subset\mathbb{R}^3$ {to} be a bounded Lipschitz domain with boundary $\Gamma$ decomposed into $\Gamma_i, i=1,2,3$, disjoint open subsets of $\Gamma$ such that {${\textstyle\bigcup_{i=1}^{3}}
	\ \bar{\Gamma}_{i}=\Gamma$}. We assume that $\Gamma_1$ is contained in a plane of $\mathbb{R}^3$ that we identify with $\mathbb{R}^2$ and that for every $x=(x_1,x_2,x_3) \in \Omega$, such that $(x_1,x_2,0)\equiv (x_1,x_2) \in \Gamma_1$,  $x_3 <0$. We consider $\Gamma_S$ a fixed non-empty compact subset of $\Gamma_1$ and $T>0$ a fixed maximal global time for the laser thermal process.
In practical applications, $\Omega=\Gamma_1\times(-h,0),$ is the powder bed layer, where $h>0$ is its thickness,  $\Gamma_1\subset \mathbb{R}^2$ is its upper surface, $\Gamma_3$ is the bottom of the powder layer i.e. the top of the previous layer or the building platform, and $\Gamma_2$ its lateral surface (see Figure \ref{fig1}). The industrial goal is to solidify  $\Gamma_S\times(-h,0),$ by using a laser path $\gamma:t\in [0,T]\mapsto \gamma(t)\in \Gamma_S$ (physically $\gamma$ means the trajectory of the laser spot  on $\Gamma_1$). 

\begin{figure}[htp]
	\centering
	\includegraphics[scale=0.5]{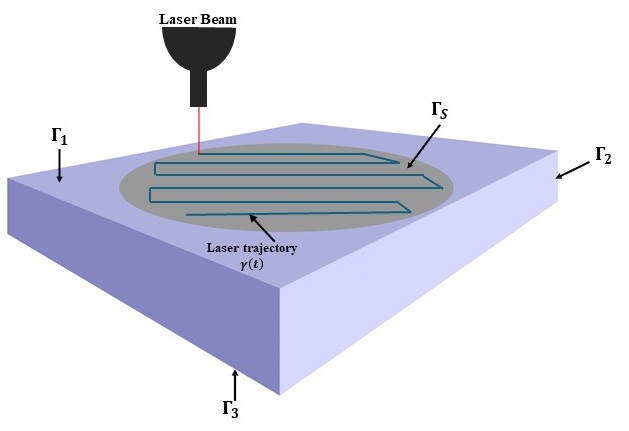}\caption{The powder bed layer.}
	\label{fig1}
\end{figure}

  To reach all these purposes, the following optimal control problem has been introduced in \cite{HNPW}:
Minimize the cost functional
		$J_r:W(0,T)\times H^1(0,T;\R^2)\times[3r,T]\rightarrow \R $ defined by
	\begin{equation}\label{eq:costfunctional}
		\begin{aligned}
			J_r (y,\gamma,\tau)
			&=
			\frac{\lambda_Q}{2}\frac{1}{r}\int_{\frac{\tau}{2}-\frac{r}{2}}^{\frac{\tau}{2}+\frac{r}{2}}\int_{\Omega}{|y(x,t)-y_Q(x,t)|^2 \d x \d t }
			\\&\qquad
			+\frac{\lambda_\Omega}{2}\frac{1}{r}\int_{\tau-r}^{\tau}\int_{\Omega}{|y(x,t)-y_\Omega(x,t)|^2 \d x \d t }
			\\&\qquad +\frac{1}{2}\int_0^T\int_\Omega |\nabla y(x,t)|^2 \d x \d t  +\frac{\lambda_\gamma}{2}
			|\gamma| ^2_{H^1(0,T;\mathbb{R}^2)}+\beta_T\tau
		\end{aligned}
	\end{equation}
subject to
\begin{itemize}
\item the parabolic linear initial boundary value problem with Robin boundary conditions:
\begin{equation}\label{eqn:linsystem}
	\begin{cases}
	\rho c \partial_t y - \kappa \Delta y = 0 \qquad\qquad & \text{in }  Q=\Omega\times]0,T[,\\
	-\kappa \partial_\nu y = hy-g_\gamma  &\text{on }  \Sigma_1:=\Gamma_1\times]0,T[, \\
	-\kappa \partial_\nu y = hy  & \text{in }   \Sigma_2:=\Gamma_2\times]0,T[, \\
	-\kappa \partial_\nu y = h(y-y_B)   &   \text{in }  \Sigma_3=\Gamma_3\times]0,T[,  \\
		y(x,0) = y_0(x) &  \text{for }  x\in \Omega,
	\end{cases}
\end{equation} 
where $g_{\gamma}$ is the source of heat (see \eqref{iglb}) corresponding to the laser path $\gamma$ on $\Gamma_1$, $y_0$ is a given initial condition  and $y_B $  a given distribution of temperature on $\Gamma_3$; 
\item	  
$\gamma$ to belong to the set of admissible paths $\Uad$ for the laser path on $\Gamma_1$:
\begin{equation}
	\label{eq:setadmcdts}
	\Uad := \set{ \gamma \in H^1(0,T;\mathbb{R}^2);\ 
	\gamma(t) \in \Gamma_S \text{, for all } t\in[0,T]},
\end{equation}
\end{itemize}
	where
\begin{equation}
	\label{eq:seminorm}
	\abs{\gamma}_{H^1(0,T; \mathbb{R}^2)}:
	=
	\parens*{
		\int_0^T \abs{\dot{\gamma}(t)}^2 d t}^{1/2}
\end{equation}
is the seminorm in $H^1(0,T;\R^2)$,
$r \in ]0,\frac{T}{3}[$,
$\lambda_Q,\ \lambda_\Omega$ and $\lambda_\gamma$ are positive constants, \ $\beta_T$ is a  
non-negative constant (thus allowed to be 0), $y_Q$ and $y_{\Omega}$ are target functions.

The control constraint set $\Uad$ takes into account the constraint on the laser range: $R(\gamma):=\gamma([0,T]) \subset \Gamma_S$. This constraint physically describes that $\gamma$ must ensure that the region \textquotedblleft below\textquotedblright \ $\Gamma_1 \setminus \Gamma_S $ is not affected by the fusion process.

In \cite{HNPW}, we have shown the existence of an optimal control $(\gamma,\tau)$, proved the first order and second order differentiability of the state mapping as well as of the reduced cost functional $\hat{J}_r$ associated to \eqref{eq:costfunctional}-\eqref{eqn:linsystem}, introduced the adjoint problem, and derived the first order necessary optimality condition. We also have derived a second order sufficient optimality condition but only when $\Gamma_S$ is convex.

It is our purpose in the current paper to pursue the study of that laser path optimization model introduced in \cite{HNPW}. Here, we consider necessary and sufficient second order optimality conditions without the restriction of $\Gamma_S$ being convex, but requiring its boundary to be smooth. We obtain no gap between the second order sufficient optimality condition and the necessary second order optimality condition. To achieve this goal, we reformulate our optimal control problem in order to apply the abstract theory of optimization in Banach spaces, see \eqref{eq:setadmcdtsb}, \eqref{genminpb}, \eqref{refopbeq1}, \eqref{refopbeq2}, \eqref{ourminpb}. Higher regularity of the trajectories for local minimizers of $\hat{J}_r$ is also proved implying higher regularity of the corresponding Lagrange multipliers. The case of the regularity of the trajectories for stationary points is left open.

The paper is structured as follows. In section \ref{sec:prelim}, we precisely state our optimal control problem with its standing assumptions and recall results that we have obtained in  \cite{HNPW} and \cite{AlNiPa} about the state mapping \eqref{CotoStaop}, the adjoint system and the first order necessary conditions. Then, we recall the results concerning the second order continuous Fréchet differentiability of the state operator as well as of the reduced cost functional $\hat{J}_r$ proved in \cite{HNPW}. The $H^2(0,T;\mathbb{R}^2)$-regularity of the trajectories $\bar{\gamma}$ for local minimizers $(\bar{\gamma},\bar{\tau})$ of our reduced cost functional $\hat{J}_r$ on the set $\Uad \times [3r,T]$ is established in section \ref{sec:regularity} and it will be shown in section \ref{sec:soc} that this implies the Lagrange multiplier $\bar{\lambda}$ to belong to $L^2([0,T])$ in this case. In section \ref{subsec:soc}, we recall the basis of the general theory of optimization with constraints in Banach spaces \cite[pp.29-33]{GW}, that we will apply in section \ref{sec:sec_deriv_Leg_form} to our optimal control problem  \eqref{ourminpb}, which is an equivalent reformulation of \eqref{eq:costfunctional}-\eqref{eqn:linsystem}-\eqref{eq:setadmcdts}\}, in order to obtain second order optimality conditions.

In section \ref{sec:soc}, supposing that the function $g$ defining $\Gamma_S$  \eqref{eq:setadmcdtsb} belongs to $C^{3}(\mathbb{R}^{2})$, we prove that the mapping $G:H^{1}(0,T;\mathbb{R}^{2})\rightarrow H^{1}(0,T):\gamma\mapsto g\circ\gamma$ is two times
continuously Fr\'{e}chet differentiable from $H^{1}(0,T;\mathbb{R}^{2})$ into $H^{1}(0,T)$. Then, we prove that the strict Robinson-Zowe-Kurcyusz condition is satisfied at any feasible point $(\bar{\gamma},\bar{\tau})$ for our optimal control problem \eqref{eq:costfunctional}-\eqref{eqn:linsystem}-\eqref{eq:setadmcdts} recast as \eqref{ourminpb} in the general framework of the theory of optimization with constraints in Banach spaces \cite[p.29]{GW} establishing consequently the existence and uniqueness of a Lagrange multiplier $(\bar\lambda,\bar\mu) $ at any $(\bar \gamma,\bar\tau)$ verifying the first order necessary conditions. We prove that $\bar\lambda$ is a positive regular Borel measure on the time interval $[0,T]$.

In section \ref{sec:sec_deriv_Leg_form}, we prove that if $(\bar \gamma,\bar\tau)$ is a stationary point with Lagrange multiplier $(\bar\lambda,\bar\mu) $, that the quadratic form associated to the second-derivative of the Lagrangian
\begin{equation*}
D^2_{(\gamma,\tau)} L(\bar \gamma,\bar\tau,\bar\lambda,\bar\mu): H^1(0,T;\mathbb{R}^2) \times \mathbb{R} \rightarrow \mathbb{R}: (\delta\gamma,\delta\tau) \mapsto D^2_{(\gamma,\tau)} L(\bar \gamma,\bar\tau,\bar\lambda,\bar\mu)[\delta\gamma,\delta\tau]^2
\end{equation*}
is a Legendre form, and we deduce a sufficient condition for $(\bar \gamma,\bar\tau)$ to be a local minimizer of the reduced cost functional $\hat{J}_r$ without gap with respect to the necessary condition. 

We close our paper by mentioning an open problem in the appendix.       

\section{\textbf{Problem setting and preliminaries}}
\label{sec:prelim}
We assume that the compact subset $\Gamma_S \subset \Gamma_1$ can be described by a function $g \colon \R^2 \to \R$
satisfying the conditions
\begin{enumerate} 
	\item
		$g$ is $C^3$,
	\item
		$\Gamma_S = \set{x \in \R^2 ; g(x) \leq 0}$,
	\item
		$g'(x) \ne 0$ for all $x \in \R^2$ with $g(x) = 0.$
\end{enumerate}
Consequently, $\Gamma_S$ is a differentiable submanifold with boundary of class $C^3$ of $\mathbb{R}^2$. In this setting, we can rewrite $\Uad$, given in \eqref{eq:setadmcdts}, as
\begin{equation}\label{eq:setadmcdtsb} 
\begin{array}[c]{c}
\Uad =\set{ \gamma \in H^1(0,T;\mathbb{R}^2);\ 
		(g \circ\gamma)(t)\leq 0 \text{, for all } t\in[0,T]}.
\end{array}
\end{equation}

\subsection{\textbf{Problem setting and standing assumptions}}
\label{subsec:asm}
Let $J_r$ from $W(0,T)\times H^1(0,T;\mathbb{R}^2)\times[3r,T]$ to $\R$ be defined as in \eqref{eq:costfunctional} and \begin{equation}
	W(0,T):=\left\{u\in L^2(0,T;H^1(\Omega));\frac{\d u}{\d t}\in L^2(0,T;H^1(\Omega)^{\star})\right\}.
	\label{deW0T}
\end{equation}
Our optimal control problem associated with the initial boundary value problem \eqref{eqn:linsystem} is formulated as
\begin{equation}
	\left\{\begin{array}{ll}
	 \label{OCP}
\text{minimize} \  J_r(y_{\gamma},\gamma,\tau)  \\ 
\gamma\in \Uad, \tau\in [3r,T]
    \end{array}\right.
\end{equation}
where $y_{\gamma} \in W(0,T)$ denotes the weak solution of the initial boundary value problem \eqref{eqn:linsystem} in the sense of \eqref{Varpb2} below, representing the temperature distribution in $\Omega$ during the time interval $[0,T]$ corresponding to the heat source $g_{\gamma}$ given in \eqref{iglb}.
For this problem, we postulate the following assumptions.
\begin{itemize}
	\item In the definition of the initial boundary value problem \eqref{eqn:linsystem}, $\rho, c, \kappa$ and $h$ are all positive constants. The initial and boundary data satisfies $y_0 \in L^2(\Omega)$ and $y_B \in L^2(\Sigma_3)$. In practical applications, $y_B$ denotes the distribution of temperatures on the top of the previous layer or on the building platform. The fixed final time is given by $T>0$.
	\item
		The heat source induced by the Gaussian laser beam $\gamma$ on $\Gamma_1$ is given by
		\begin{equation} \label{iglb}
			g_{\gamma}(x,t)=\alpha\frac{2P}{\pi R^{2}}\exp\left(-2\frac{\left\vert
			x-\gamma(t)\right\vert ^{2}}{R^{2}}\right) \qquad \text{for all }(x,t)\in\Gamma_{1}%
		\times]0,T[,		  
		\end{equation}
		where the positive constant $P$ denotes the power of the Gaussian laser beam, the positive constant $R$ its radius and the positive constant $\alpha$ the absorption coefficient of the powder. Let us note that \eqref{iglb} is meaningful for any $\gamma \in H^1(0,T;\mathbb{R}^2)$.

	\item In the definition of the cost functional \eqref{eq:costfunctional}, $r\in]0,\frac{T}{3}[$ is a fixed real number representing the length of each of the two time intervals in the heating process considered in \eqref{eq:costfunctional}. Further $\lambda_{Q},\lambda_{\Omega},\lambda_{\gamma}$ are positive constants and $\beta_{T}$ a non-negative constant.
		The desired temperatures enjoy the regularity $y_Q, y_\Omega \in C([0,T];L^2(\Omega))$.
\end{itemize}

\begin{remark}
In the definition \eqref{eq:setadmcdtsb} of the set of the admissible paths $\Uad$,
we have removed the condition that $|\dot{\gamma}|$ is uniformly bounded from the corresponding definition given in \cite{HNPW},
but we assume that the coefficient $\lambda_{\gamma}$ in the cost functional \eqref{eq:costfunctional} is positive.
Consequently, the results of \cite{HNPW} remain valid in our actual setting.

Let us also note that $|\gamma(t)|_{\mathbb{R}^2}$ is uniformly bounded for $\gamma \in \Uad$ and $t \in [0,T]$ due to the fact that the range $R(\gamma)$ of $\gamma$ is contained in the compact set $\Gamma_S$. This allows us to replace $\lVert \gamma \rVert_{H^1(0,T;\mathbb{R}^2)}^2$ in the definition of the cost functional $J_r$ by the squared semi-norm $| \gamma |_{H^1(0,T;\mathbb{R}^2)}^2$.
Again, this is a small difference with respect to the setting in \cite{HNPW}.    
\end{remark}

\subsection{\textbf{Results concerning the state mapping}}
\label{subsec:PDE}
We say that $y \in W(0,T)$ is a weak solution of the heat equation \eqref{eqn:linsystem}, if and only if
\begin{equation}\label{Varpb2}	
	\begin{aligned}
		\MoveEqLeft
		\rho c \langle \frac{\d y}{\d t},\xi \rangle _{H^1(\Omega)^{\star},H^1(\Omega)} + \kappa \int_{\Omega}\nabla y\cdot\nabla\xi   \d x+h\int_{\Gamma}y\cdot\xi \d S(x)
		\\&
		=\int_{\Gamma_1}g_\gamma\cdot\xi \d S(x)+h\int_{\Gamma_3}y_B\cdot\xi \d S(x),
	\end{aligned}
\end{equation} 
holds in $L^{2}([0,T])$
for all $\xi \in H^1(\Omega)$ 
together with the initial condition $y(0)=y_0\in L^{2}(\Omega)$, which makes sense as $W(0,T)\hookrightarrow C([0,T];L^2(\Omega))$\footnote{Here and below, $X\hookrightarrow Y$ means that $X$ is continuously embedded into $Y$.}.
\begin{proposition}
	Under our general assumptions, for every given initial datum $y_{0} \in L^{2}(\Omega)$ and every $\gamma \in H^1(0,T;\mathbb{R}^2)$, problem \eqref{eqn:linsystem} has a unique weak solution $y_{\gamma}$ (we say that $y_{\gamma}$ is the state corresponding to $\gamma$).
\end{proposition}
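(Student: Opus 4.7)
The plan is to apply the classical variational theory of J.-L. Lions for linear parabolic equations, since \eqref{Varpb2} fits the abstract template of a parabolic problem in the Gelfand triple $H^1(\Omega) \hookrightarrow L^2(\Omega) \hookrightarrow H^1(\Omega)^\star$. The result is standard but requires checking that the data fit the abstract framework; it was in fact already established in \cite{HNPW}, so at the level of the paper one may simply cite it. Below I sketch the argument one would give from scratch.

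First, I would introduce the time-independent bilinear form
\begin{equation*}
a(u,v) := \kappa \int_\Omega \nabla u \cdot \nabla v \dx + h \int_\Gamma u\, v \d S(x),
\qquad u,v \in H^1(\Omega).
\end{equation*}
Continuity of $a$ on $H^1(\Omega) \times H^1(\Omega)$ follows from Cauchy-Schwarz together with the continuity of the trace $H^1(\Omega) \to L^2(\Gamma)$. Coercivity of $a$ is the key ingredient: since $\kappa, h > 0$ and $\Omega$ is a bounded Lipschitz domain, the expression $\kappa \norm{\nabla u}_{L^2(\Omega)}^2 + h \norm{u}_{L^2(\Gamma)}^2$ defines a norm on $H^1(\Omega)$ equivalent to the usual one, by a standard Poincaré-type argument using that the trace on $\Gamma$ controls, together with the gradient, the $L^2(\Omega)$-norm of $u$.

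Second, I would verify that the right-hand side of \eqref{Varpb2} defines an element of $L^2(0,T; H^1(\Omega)^\star)$. For the term involving $y_B$, the assumption $y_B \in L^2(\Sigma_3)$ combined with the continuity of the trace $H^1(\Omega) \to L^2(\Gamma_3)$ yields that $\xi \mapsto h\int_{\Gamma_3} y_B(\cdot,t)\, \xi \d S(x)$ lies in $L^2(0,T;H^1(\Omega)^\star)$. For the laser term, the explicit formula \eqref{iglb} shows that for every $\gamma \in H^1(0,T;\mathbb{R}^2)$ the function $g_\gamma$ is uniformly bounded on $\Sigma_1$ by $\alpha\,\frac{2P}{\pi R^2}$, hence in particular $g_\gamma \in L^2(\Sigma_1)$; together with the trace continuity $H^1(\Omega) \to L^2(\Gamma_1)$ this again gives the required dual-space regularity.

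Third, with $a$ bounded and coercive, the right-hand side in $L^2(0,T; H^1(\Omega)^\star)$, and the initial datum $y_0 \in L^2(\Omega)$, the abstract Lions theorem for parabolic variational equations (see, e.g., Dautray-Lions or Lions-Magenes) yields a unique $y_\gamma \in W(0,T)$ satisfying \eqref{Varpb2} pointwise a.e.\ in $t$; the initial condition $y_\gamma(0) = y_0$ is meaningful thanks to the embedding $W(0,T) \hookrightarrow C([0,T];L^2(\Omega))$ recalled immediately before the statement. I do not foresee a genuine obstacle here: coercivity of $a$ and the $L^\infty$ bound on the Gaussian source $g_\gamma$ are the only two points that require a line of justification, and both are immediate from the standing assumptions.
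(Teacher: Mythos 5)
Your proposal is correct and follows the same route as the paper, which simply invokes the general variational theory for linear parabolic initial boundary value problems (Dautray--Lions, Chipot) applied to the formulation \eqref{Varpb2}; you merely make explicit the verification of the hypotheses (boundedness and coercivity of the bilinear form via the Friedrichs-type inequality with the boundary term, and the $L^2$ regularity of the right-hand side using the uniform bound $g_\gamma \le \alpha\frac{2P}{\pi R^2}$ and trace continuity). All of these checks are sound, so nothing is missing.
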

\begin{proof}
	The existence and uniqueness of a weak solution to \eqref{eqn:linsystem} with initial condition $y(0)=y_0 \in L^{2}(\Omega)$, follows easily from the general results for linear I.B.V.P for parabolic equations \cite[pp.512-513]{9} or \cite[Ch.11]{Chi} applied to the variational formulation \eqref{Varpb2}. 
\end{proof}
\begin{proposition}
	The feasible set $	\Uad$ is a weakly closed subset of $H^1(0,T;\mathbb{R}^2).$ 
\end{proposition}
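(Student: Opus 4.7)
The plan is to show that $\Uad$ is \emph{sequentially} weakly closed, which is the notion needed in the optimization arguments that follow (and in fact suffices for weak closedness arguments on bounded sets via the metrizability of the weak topology on bounded subsets of the reflexive separable space $H^1(0,T;\mathbb{R}^2)$). The key ingredient is the compact embedding $H^1(0,T;\mathbb{R}^2) \hookrightarrow C([0,T];\mathbb{R}^2)$, which follows from the one-dimensional Sobolev embedding $H^1(0,T) \hookrightarrow C^{0,1/2}([0,T])$ combined with the Arzel\`a–Ascoli theorem.

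Concretely, I would take an arbitrary sequence $(\gamma_n) \subset \Uad$ with $\gamma_n \weakly \gamma$ in $H^1(0,T;\mathbb{R}^2)$. Since weakly convergent sequences are bounded and the embedding into $C([0,T];\mathbb{R}^2)$ is compact, up to a subsequence $\gamma_n \to \tilde\gamma$ uniformly on $[0,T]$ for some $\tilde\gamma \in C([0,T];\mathbb{R}^2)$. By uniqueness of weak limits (uniform convergence entails weak convergence in $H^1$ after identifying the limits via, e.g., testing against $L^2$ functions), $\tilde\gamma = \gamma$, so $\gamma_n \to \gamma$ uniformly on $[0,T]$.

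Now I would exploit the continuity of $g$. Since $g \in C^3(\mathbb{R}^2)$ and the range of every $\gamma_n$ lies in the compact set $\Gamma_S$, uniform convergence $\gamma_n \to \gamma$ on $[0,T]$ yields $(g\circ\gamma_n) \to (g\circ\gamma)$ uniformly on $[0,T]$. For each fixed $t \in [0,T]$, $(g\circ\gamma_n)(t) \le 0$ passes to the limit to give $(g\circ\gamma)(t) \le 0$. Equivalently (using the description of $\Gamma_S$ as the zero sublevel set of $g$), $\gamma(t) \in \Gamma_S$ for every $t \in [0,T]$, so $\gamma \in \Uad$ by \eqref{eq:setadmcdtsb}.

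I do not expect a serious obstacle here: the proof is essentially a Rellich-type compactness argument followed by pointwise passage to the limit in the continuous constraint. The only point that deserves attention is precisely why the compact embedding into $C([0,T];\mathbb{R}^2)$ holds in one time variable, but this is classical. Note that convexity of $\Gamma_S$ plays no role in this argument, which is consistent with the paper's aim of treating non-convex $\Gamma_S$.
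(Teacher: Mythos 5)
Your argument is correct and follows essentially the same route as the paper's proof: weak convergence in $H^1(0,T;\mathbb{R}^2)$ plus the compact embedding into $C([0,T];\mathbb{R}^2)$ yields uniform convergence of (a subsequence of) the paths, after which the constraint passes to the pointwise limit. The only cosmetic difference is that the paper concludes directly from the closedness of the compact set $\Gamma_S$ while you use the equivalent sublevel-set description $g\circ\gamma\le 0$ from \eqref{eq:setadmcdtsb}; both proofs establish sequential weak closedness, which is what is actually used later.
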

\begin{proof}
	Let $(\gamma_n)_n$  be a sequence in $\Uad$ weakly convergent to some $\gamma$ in $H^1(0,T;\mathbb{R}^2)$. Then, given the compact embedding from $H^1(0,T;\mathbb{R}^2)$ into $C([0,T];\mathbb{R}^2)$, it implies that $(\gamma_n)_n$ is strongly convergent in $C([0,T];\R^2).$
	For the corresponding ranges, we have $R(\gamma_n)\subset\Gamma_{S}$ and therefore $R(\gamma)\subset\Gamma_{S}$  since  $\Gamma_{S}$ is compact, thus closed.
\end{proof}

We define the path-to-state mapping $S$ associated with the state equation \eqref{eqn:linsystem} via
\begin{equation} \label{CotoStaop}
	S:H^{1}(0,T;\mathbb{R}^{2})\rightarrow W(0,T):\gamma\mapsto S(\gamma):=y_{\gamma}.
\end{equation}

\begin{remark}
	\begin{itemize}
		\item We will also sometimes use the notation $y(\gamma)$ instead of $y_{\gamma}$, if more convenient.
		\item In \cite{HNPW}, the path-to-state mapping is denoted $G$ but to avoid confusion with \eqref{defopG} below, we have denoted it by $S$ in this paper.
	\end{itemize}
\end{remark}

\begin{proposition}\label{GWSC}
	The path-to-state map $S	\colon H^{1}(0,T;\mathbb{R}^{2})\to W(0,T)$ is sequentially continuous from $H^{1}(0,T;\mathbb{R}^{2})$ endowed with the weak topology into $W(0,T)$ endowed with the strong topology.
\end{proposition}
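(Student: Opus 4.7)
The plan is to reduce the claim to the standard continuous-dependence estimate for the linear parabolic initial boundary value problem \eqref{eqn:linsystem}, exploiting that the control $\gamma$ enters only through the boundary source $g_\gamma$ on $\Sigma_1$. So, suppose $\gamma_n \weakly \gamma$ in $H^1(0,T;\mathbb{R}^2)$. The compact embedding $H^1(0,T;\mathbb{R}^2) \hookrightarrow C([0,T];\mathbb{R}^2)$ already used in the previous proposition yields $\gamma_n \to \gamma$ uniformly on $[0,T]$.

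Next, I would show that the map $\gamma \mapsto g_\gamma$ is continuous from $C([0,T];\mathbb{R}^2)$ into $L^2(\Sigma_1)$. The function $\xi \mapsto \exp(-2|\xi|^2/R^2)$ is Lipschitz on $\mathbb{R}^2$ with some fixed constant $L$; hence for every $(x,t)\in\Sigma_1$,
\begin{equation*}
|g_{\gamma_n}(x,t) - g_\gamma(x,t)| \le \alpha\, \frac{2P}{\pi R^{2}}\, L\, |\gamma_n(t)-\gamma(t)|.
\end{equation*}
Since $\Gamma_1$ has finite surface measure, integrating the square of this bound over $\Sigma_1$ gives $\|g_{\gamma_n}-g_\gamma\|_{L^2(\Sigma_1)} \le C_1\, \|\gamma_n-\gamma\|_{C([0,T];\mathbb{R}^2)} \to 0$.

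Finally, linearity of the state equation closes the argument. The difference $z_n := y_{\gamma_n}-y_\gamma$ is the weak solution in $W(0,T)$ of \eqref{eqn:linsystem} with zero initial datum, zero data on $\Sigma_3$, and boundary source $g_{\gamma_n}-g_\gamma$ on $\Sigma_1$. The well-posedness result used in the previous proposition provides a constant $C_2>0$, independent of $n$, such that
\begin{equation*}
\|z_n\|_{W(0,T)} \le C_2\, \|g_{\gamma_n}-g_\gamma\|_{L^2(\Sigma_1)},
\end{equation*}
which tends to $0$. I expect no genuine obstacle: both ingredients — Lipschitz continuity of the Gaussian profile and linear continuous dependence on $L^2$-source data — are standard, and the real content of the proposition is the soft fact that the compact embedding upgrades weak convergence of the control to strong convergence of the source term.
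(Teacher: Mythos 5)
Your proposal is correct and follows essentially the same route as the paper: compact embedding of $H^1(0,T;\mathbb{R}^2)$ into $C([0,T];\mathbb{R}^2)$, continuity of $\gamma \mapsto g_\gamma$ into $L^2(\Sigma_1)$, and continuous dependence of the weak solution of \eqref{eqn:linsystem} on its data (the paper cites \cite[Theorem 3.13, p.150]{FrTr} for this last step). You merely spell out the Lipschitz estimate for the Gaussian profile and the use of linearity, which the paper leaves implicit.
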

\begin{proof}
The sequential continuity from $\Uad$ endowed with the weak topology into $W(0,T)$ endowed with the strong topology follows from the compact embedding of $H^1(0,T;\mathbb{R}^2)$ into  $C([0,T];\mathbb{R}^2)$, from the continuity of the mapping $\gamma \mapsto g_{\gamma}$ \eqref{iglb} from $C([0,T];\mathbb{R}^2)$ into $L^2(\Sigma_1)$, and from the continuity of the solution of the parabolic linear initial boundary value problem with Robin boundary conditions \eqref{eqn:linsystem} with respect to its data
 \cite[Theorem 3.13, p.150]{FrTr}. 
\end{proof}

As there is no difference between our path-to-state mapping  $S$ with that one considered in \cite{AlNiPa}, we also have the following result about $S$.
\begin{theorem} \cite{AlNiPa} \label{FDG}
	The path-to-state mapping   $S:H^1(0,T;\R^2) \longrightarrow W(0,T)$ is a continuously Fréchet differentiable mapping, and for all $\gamma,\ \delta\gamma\in H^1(0,T;\R^2)$
	the Fréchet derivative is given by
	\begin{equation} \label{diffOfG}
		DS(\gamma)\cdot\delta \gamma =\frac{8\alpha P}{\pi R^4}\  \sigma\big(e^{w(\gamma)}\tilde{\gamma}(\gamma)\cdot\delta\gamma\big),
	\end{equation}
	where $\tilde\gamma(\gamma)(x,t):=x-\gamma(t)$
	for all $(x,t)\in \bar\Gamma_1\times[0,T]$,
	$w(\gamma) := -\frac{2}{R^2}|\tilde\gamma(\gamma)|^2$
	and $\sigma$ is the linear continuous mapping defined by
	\begin{equation} \label{sigop}
		\begin{aligned}
			\sigma \colon L^2(\Sigma_1) \to W(0,T)\colon	\delta v  \mapsto \delta y_2,
	\end{aligned} \end{equation}
	where $\delta y_2$ is the unique weak solution of 
	\begin{equation}\label{ivbp}
		\begin{cases}
			\rho c \partial_t \delta y_2 - \kappa \Delta \delta y_2 = 0  \quad \ \   &\text{ in }   Q,\\
			\kappa \partial_\nu \delta y_2 + h\delta y_2=\delta v \quad   &\text{ on }   \Sigma_1, \\
			\kappa \partial_\nu \delta y_2 + h\delta y_2=0 \quad\quad \ \ \ &  \text{ on }   \Sigma_2 \cup \Sigma_3,\\
			\delta y_2(\cdot,0) = 0 \qquad  &\text{ in  }\  \Omega.
		\end{cases}
	\end{equation}
\end{theorem}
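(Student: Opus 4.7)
The plan is to exploit the fact that $\gamma$ enters problem \eqref{eqn:linsystem} only through the boundary source $g_\gamma$ on $\Sigma_1$, and that \eqref{eqn:linsystem} is linear. Writing the full data-to-state map for \eqref{eqn:linsystem} as an affine continuous operator from $L^2(\Omega)\times L^2(\Sigma_3)\times L^2(\Sigma_1)$ (the space of the triples $(y_0,y_B,v)$) into $W(0,T)$, one sees that $S(\gamma)=S(0)+\sigma(g_\gamma-g_0)$, where $\sigma$ is the operator defined in \eqref{sigop}. Hence the chain rule reduces the theorem to (i) continuous linearity of $\sigma\colon L^2(\Sigma_1)\to W(0,T)$, which follows from standard parabolic existence and continuous-dependence results (as already invoked in the previous proposition), and (ii) continuous Fréchet differentiability of the source map $\Phi\colon\gamma\mapsto g_\gamma$ from $H^1(0,T;\mathbb{R}^2)$ into $L^2(\Sigma_1)$, with $D\Phi(\gamma)\cdot\delta\gamma=\frac{8\alpha P}{\pi R^4}\,e^{w(\gamma)}\,\tilde\gamma(\gamma)\cdot\delta\gamma$.

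To establish (ii), I would first use the compact (in particular continuous) embedding $H^1(0,T;\mathbb{R}^2)\hookrightarrow C([0,T];\mathbb{R}^2)$, so that it suffices to prove $\Phi$ is Fréchet differentiable as a map from $C([0,T];\mathbb{R}^2)$ into $L^\infty(\Sigma_1)\hookrightarrow L^2(\Sigma_1)$. Since $\Gamma_1$ is bounded and the kernel $(x,z)\mapsto \exp(-2|x-z|^2/R^2)$ together with all its derivatives in $z$ is bounded on $\Gamma_1\times K$ for any bounded $K\subset\mathbb{R}^2$, a pointwise Taylor expansion in $z=\gamma(t)$ gives
\begin{equation*}
g_{\gamma+\delta\gamma}(x,t)-g_\gamma(x,t)=\tfrac{8\alpha P}{\pi R^4}e^{w(\gamma)(x,t)}(x-\gamma(t))\cdot\delta\gamma(t)+r(x,t;\delta\gamma),
\end{equation*}
with a remainder $r$ satisfying $\|r(\cdot,\cdot;\delta\gamma)\|_{L^\infty(\Sigma_1)}=o(\|\delta\gamma\|_{C([0,T];\mathbb{R}^2)})$ as $\delta\gamma\to 0$. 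The announced derivative formula \eqref{diffOfG} then follows by composing with $\sigma$.

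For the \emph{continuous} differentiability, one must show $\gamma\mapsto D\Phi(\gamma)$ is continuous from $H^1(0,T;\mathbb{R}^2)$ into $\mathcal{L}(H^1(0,T;\mathbb{R}^2),L^2(\Sigma_1))$. Factoring once more through $C([0,T];\mathbb{R}^2)$, this reduces to the uniform continuity of $(x,z)\mapsto e^{-2|x-z|^2/R^2}(x-z)$ on $\Gamma_1\times K$ for bounded $K\subset\mathbb{R}^2$, combined with the fact that a convergent sequence in $C([0,T];\mathbb{R}^2)$ stays in a bounded set. Post-composition with the fixed continuous linear operator $\sigma$ preserves the continuity of the derivative.

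The main obstacle I anticipate is controlling the Taylor remainder uniformly in $(x,t)\in\Sigma_1$: since $\gamma$ is only in $H^1(0,T;\mathbb{R}^2)$ and $\gamma(t)$ is not a priori confined to a bounded set of $\mathbb{R}^2$ (unlike the case $\gamma\in\Uad$), one must first restrict $\delta\gamma$ to a neighbourhood of the base point in $C([0,T];\mathbb{R}^2)$ via the continuous embedding, and then exploit the exponential decay of the Gaussian and its derivatives in order to obtain bounds uniform in $x\in\Gamma_1$. Once this localisation is in place, the remaining verifications are routine calculus.
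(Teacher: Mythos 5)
Your argument is correct, and it is essentially the only natural route: the paper itself gives no proof of this theorem but imports it from \cite{AlNiPa}, and the very form of \eqref{diffOfG} (the fixed linear solution operator $\sigma$ applied to the pointwise $z$-derivative of the Gaussian kernel at $z=\gamma(t)$) already encodes the decomposition $S(\gamma)=S(0)+\sigma(g_\gamma-g_0)$ you use, which is valid by linearity of \eqref{eqn:linsystem} in the data $(y_0,y_B,g_\gamma)$ and uniqueness of weak solutions. One remark: the ``main obstacle'' you anticipate is in fact vacuous. The kernel $(x,z)\mapsto\exp(-2\abs{x-z}^2/R^2)$ has all its $z$-derivatives bounded \emph{globally} on $\R^2\times\R^2$ (Gaussian decay dominates the polynomial factors), so the second-order Taylor remainder is bounded by $C\abs{\delta\gamma(t)}^2$ with a constant independent of $x$ and of the base point $\gamma$, and the map $\gamma\mapsto e^{w(\gamma)}\tilde\gamma(\gamma)$ is even globally Lipschitz from $C([0,T];\R^2)$ into $L^\infty(\Sigma_1;\R^2)$; no localisation to a bounded set or to $\Uad$ is needed. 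The only hypotheses you should state explicitly are that $\Sigma_1$ has finite measure (so that $L^\infty(\Sigma_1)\hookrightarrow L^2(\Sigma_1)$) and the continuity of the embedding $H^1(0,T;\R^2)\hookrightarrow C([0,T];\R^2)$, both of which you already invoke.
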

\subsection{\textbf{Results obtained previously concerning our optimal control problem}}
\label{subsec:oc}

We recall hereafter definitions and results from \cite{HNPW}.
\begin{definition} \label{defrcf} Given a fixed $r \in ]0,T/3[$, we define the reduced cost functional
	\begin{equation}\label{costfunctional}
		\hat J_r	\colon
		H^1(0,T;\mathbb{R}^2)\times[3r,T]  \to \mathbb{R}\colon		( \gamma ,\tau )  \mapsto  \hat J_r(\gamma,\tau)=J_r(S(\gamma),\gamma,\tau),
	\end{equation} 
	where 
	$J_r$ was given in \eqref{eq:costfunctional}.
\end{definition}
Using the reduced cost functional, we can reformulate \eqref{OCP} as
\begin{equation}
	\label{ROCP}
	\left\{
		\begin{aligned}
			&\text{minimize} \  \hat J_r(\gamma,\tau)  \\ 
			&\gamma\in \Uad, \tau\in [3r,T].
		\end{aligned}
	\right.
\end{equation}
\begin{theorem}\label{EGMin} \cite{HNPW} {(Existence of a global minimizer)}\\
	There exists $(\bar \gamma, \bar \tau)\in \Uad\times[3r,T]$ such that $\hat J_r(\bar \gamma, \bar \tau)\leq \hat J_r(\gamma,\tau)$ for every  $(\gamma,\tau)$ in $\Uad\times[3r,T],$ i.e. the optimal control problem \eqref{ROCP} admits at least one global minimizer.
\end{theorem}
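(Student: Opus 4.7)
The plan is to apply the direct method of the calculus of variations, exploiting the two weak-convergence tools already at our disposal: the weak closedness of $\Uad$ (proved in the previous proposition) and the weak-to-strong sequential continuity of $S$ (Proposition \ref{GWSC}). Since $\hat{J}_r$ is a sum of squared terms plus $\beta_T\tau \geq 0$, it is bounded below by $0$, so an infimum $m := \inf_{\Uad \times [3r,T]} \hat{J}_r \in [0,+\infty)$ exists and a minimizing sequence $(\gamma_n,\tau_n)_n \subset \Uad \times [3r,T]$ with $\hat{J}_r(\gamma_n,\tau_n) \to m$ can be chosen.

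Next I would extract a convergent subsequence. Compactness of $[3r,T]$ yields a subsequence (not relabelled) such that $\tau_n \to \bar\tau \in [3r,T]$. For the paths, the inclusion $\gamma_n([0,T]) \subset \Gamma_S$ and the compactness of $\Gamma_S$ supply a uniform $L^\infty$, hence $L^2(0,T;\R^2)$, bound on $(\gamma_n)$. Because $\hat{J}_r(\gamma_n,\tau_n)$ is bounded and $\lambda_\gamma > 0$, the semi-norm $|\gamma_n|_{H^1(0,T;\R^2)}$ is uniformly bounded as well. Together these give a uniform bound on $\|\gamma_n\|_{H^1(0,T;\R^2)}$, and by reflexivity there is a further subsequence with $\gamma_n \weakly \bar\gamma$ in $H^1(0,T;\R^2)$. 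Feasibility of the limit is then immediate: $\bar\tau \in [3r,T]$ and, by the weak closedness of $\Uad$, $\bar\gamma \in \Uad$.

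It remains to establish weak lower semicontinuity of $\hat{J}_r$ along the selected subsequence. By Proposition \ref{GWSC}, $y_{\gamma_n} = S(\gamma_n) \to S(\bar\gamma) = y_{\bar\gamma}$ strongly in $W(0,T)$. Since $W(0,T) \hookrightarrow C([0,T];L^2(\Omega))$ and $W(0,T) \hookrightarrow L^2(0,T;H^1(\Omega))$, the two time-integrated tracking terms against $y_Q$ and $y_\Omega$ pass to the limit by continuity, and the Dirichlet-energy term $\tfrac{1}{2}\int_0^T\int_\Omega |\nabla y|^2$ converges as well because $\nabla y_{\gamma_n} \to \nabla y_{\bar\gamma}$ strongly in $L^2(Q)$. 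The boundary-time term $\beta_T \tau_n \to \beta_T \bar\tau$ trivially. Finally, the seminorm-squared term $\tfrac{\lambda_\gamma}{2}|\gamma|_{H^1}^2$ is weakly lower semicontinuous on $H^1(0,T;\R^2)$. Summing these,
\[
\hat{J}_r(\bar\gamma,\bar\tau) \leq \liminf_{n\to\infty} \hat{J}_r(\gamma_n,\tau_n) = m,
\]
which forces equality and identifies $(\bar\gamma,\bar\tau)$ as a global minimizer.

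The one place to exercise care, compared with the corresponding result in \cite{HNPW}, is the $H^1$-boundedness step: the current $\Uad$ no longer contains a uniform bound on $|\dot\gamma|$, so boundedness of the minimizing sequence in $H^1$ must be read off the cost itself. The positivity assumption $\lambda_\gamma > 0$ is exactly what makes this work, and in combination with the $L^\infty$-bound inherited from the compact range constraint $\gamma(t)\in\Gamma_S$ it closes the only potential gap in the argument.
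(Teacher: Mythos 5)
Your proposal is correct and follows essentially the same route as the paper: direct method, $H^1$-boundedness of the minimizing sequence extracted from $\lambda_\gamma>0$ together with the $L^\infty$-bound from the compact range constraint, weak closedness of $\Uad$, strong convergence of the states via Proposition \ref{GWSC}, and weak lower semicontinuity of the squared seminorm. The only place where the paper is more explicit is the convergence of the two tracking terms, whose integration limits depend on $\tau_{n}$: the paper splits the difference and invokes dominated convergence for the moving indicator functions, whereas you compress this into ``pass to the limit by continuity''; your sketch is sound (uniform control via $W(0,T)\hookrightarrow C([0,T];L^2(\Omega))$ handles the moving endpoints), but that step deserves a line of justification.
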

\begin{proof}
We give a different proof than in \cite{HNPW} based on our new Proposition \ref{GWSC}.
First, we can observe that 	$\hat J_r$  is non-negative in $ \textstyle{\Uad\times[3r,T]}$, hence $\textstyle{\hat J_r}$ is bounded from below and admits a finite infimum $\textstyle{L}$
	\begin{equation}\label{Ldef}
\		{L:= \inf_{(\gamma,\tau) \in {W_{ad}} } \hat J_r(\gamma,\tau)}. \end{equation}
	Thus, let us consider a minimizing sequence $\textstyle{\{(\gamma_n,\tau_n)\}_{n\in\mathbb{N}}}$ in $\textstyle{\Uad \times [3r,T]}$ such that
	\begin{equation}\lim_{n\xrightarrow{} +\infty } {\hat J_r(\gamma_n,\tau_n)= L
		}\geq 0.	\end{equation}
		Since $\lambda_\gamma > 0$ and since $\Gamma_S$ is bounded,
		the sequence $\textstyle{({\gamma_n})_{n \in \mathbb{N}}}$ is bounded in $H^1(0,T;\mathbb{R}^2)$, so that there exists a subsequence $\textstyle{(\gamma_{n_j})_{j\in\mathbb{N}}}$ such that $\gamma_{n_j} \xrightharpoonup[j \xrightarrow{} \infty]
{ }\bar\gamma$ in $H^1(0,T;\mathbb{R}^2)$. Consequently by \cite[Theorem 2.12]{FrTr} we get  $ \Arrowvert\bar\gamma\Arrowvert_{H^1(0,T;\mathbb{R}^2)}^2\leq \liminf_{j\to +\infty}\Arrowvert\gamma_{n_j}\Arrowvert_{H^1(0,T;\mathbb{R}^2)}^2$. 
	The sequence $\textstyle{\left(\tau_{n_j}\right)_{j\in\mathbb{N}}}$ is such that $\textstyle{3r \leq\tau_{n_j}\leq T}$, so that $\textstyle{(\tau_{n_j})_{j\in \mathbb{N}}}$ is bounded in $\mathbb{R}$. Therefore, there exists a subsequence $\textstyle{(\tau_{n_{j_{k}}})_{k\in\mathbb{N}}}$ that we rename $\textstyle{(\tau_{n_j})_{j \in \mathbb{N}}}$, and $\textstyle{\bar \tau \in [3r,T]}$, such that 
	$\lim_{j\rightarrow +\infty}\tau_{n_j} = \bar \tau \in [3r,T]$. By Proposition \ref{GWSC}, $S(\gamma_{n_j}) $ converges strongly to $S(\bar\gamma)$ in $ W(0,T)$. 
	Thus, in particular $S(\gamma_{n_j}) \xrightarrow[j \xrightarrow{} +\infty]{}S(\bar\gamma)$  in  $L^2(0,T;H^1(\Omega))$.  	
		
		 Consequently
\begin{equation} \label{CvGrad}
		 \frac{1}{2}\int_0^T\int_\Omega |\nabla S(\gamma_{n_j})|^2 \d x \d t  \to \frac{1}{2}\int_0^T\int_\Omega |\nabla S(\bar{\gamma})|^2 \d x \d t  \text{  as } j \to +\infty .
\end{equation}

To conclude that 
\begin{equation} \label{NV0}
L=\lim_{j \to +\infty}\hat{J}_r(\bar \gamma_{n_j},\bar \tau_{n_j})=\liminf_{j \to +\infty}\hat{J}_r(\bar \gamma_{n_j},\bar \tau_{n_j})\geq \hat{J}_r(\bar \gamma,\bar \tau),
\end{equation}
it remains to prove that 	
\begin{equation} \label{NV1}
\int_{\frac{\bar\tau}{2}-\frac{r}{2}}^{\frac{\bar\tau}{2}+\frac{r}{2}}\int_{\Omega}\arrowvert S(\bar\gamma)-y_Q\arrowvert^2 \d x \d t  = \lim_{j \to +\infty}	\int_{\frac{\tau_{n_j}}{2}-\frac{r}{2}}^{\frac{\tau_{n_j}}{2}+\frac{r}{2}}\int_{\Omega}\arrowvert S(\gamma_{n_j})-y_Q\arrowvert^2 \d x \d t ,
\end{equation}
and 
\begin{equation} \label{NV2}
		\int_{\bar\tau-r}^{\bar\tau}\int_{\Omega}\arrowvert S(\bar\gamma)-y_\Omega\arrowvert^2 \d x \d t  = \lim_{j \to +\infty}	\int_{\tau_{n_j}-r}^{\tau_{n_j}}\int_{\Omega}\arrowvert S(\gamma_{n_j})-y_\Omega\arrowvert^2 \d x \d t .
\end{equation}
We have
\begin{equation} \label{NV3}
	\begin{aligned}
		\MoveEqLeft
		\int_{\frac{\bar{\tau}}{2}-\frac{r}{2}}^{\frac{\bar{\tau}}{2}+\frac{r}{2}}%
		\int_{\Omega}|S(\bar{\gamma})-y_{Q}|^{2} \d x \d t-\int_{\frac{\tau_{n_{j}}}%
		{2}-\frac{r}{2}}^{\frac{\tau_{n_{j}}}{2}+\frac{r}{2}}\int_{\Omega}%
		|S(\gamma_{n_{j}})-y_{Q}|^{2} \d x \d t\\
		&=\int_{\frac{\bar{\tau}}{2}-\frac{r}{2}}^{\frac{\bar{\tau}}{2}+\frac{r}{2}%
		}\int_{\Omega}|S(\bar{\gamma})-y_{Q}|^{2} \d x \d t-\int_{\frac{\tau_{n_{j}}}%
		{2}-\frac{r}{2}}^{\frac{\tau_{n_{j}}}{2}+\frac{r}{2}}\int_{\Omega}%
		|S(\bar{\gamma})-y_{Q}|^{2} \d x \d t\\
		&\qquad+\int_{\frac{\tau_{n_{j}}}{2}-\frac{r}{2}}^{\frac{\tau_{n_{j}}}{2}+\frac{r}%
		{2}}\int_{\Omega}\Big[|S(\bar{\gamma})-y_{Q}|^{2}-|S(\gamma_{n_{j}})-y_{Q}%
		|^{2}\Big] \d x \d t.
	\end{aligned}
\end{equation}
The term on the third line in \eqref{NV3} tends to zero as $j \to +\infty$, since
\begin{align*}
	\MoveEqLeft
\Big|\int_{\frac{\tau_{n_{j}}}{2}-\frac{r}{2}}^{\frac{\tau_{n_{j}}}{2}+\frac{r}%
{2}}\int_{\Omega}\big[|S(\bar{\gamma})-y_{Q}|^{2}-|S(\gamma_{n_{j}})-y_{Q}%
|^{2}\big] \d x \d t\Big|
\\&\leq\int_{0}^{T}\int_{\Omega}\Big||S(\bar{\gamma})-y_{Q}%
|^{2}-|S(\gamma_{n_{j}})-y_{Q}|^{2}\Big| \d x \d t\\
&={\textstyle\iint\limits_{Q}}
\Big||S(\bar{\gamma})-y_{Q}|-|S(\gamma_{n_{j}})-y_{Q}|\Big|\ \big[|S(\bar{\gamma}%
)-y_{Q}|+|S(\gamma_{n_{j}})-y_{Q}|\big] \d x \d t\\
&\leq{\textstyle\iint\limits_{Q}}
|S(\bar{\gamma})-S(\gamma_{n_{j}})|\ \big[|S(\bar{\gamma})-y_{Q}|+|S(\gamma
_{n_{j}})-y_{Q}|\big] \d x \d t\\
&\leq
\left\Vert S(\bar{\gamma})-S(\gamma_{n_{j}})\right\Vert _{L^{2} (Q)}
\parens*{
\left\Vert S(\bar{\gamma})-y_{Q}\right\Vert _{L^{2}(Q)}
+
\left\Vert S(\gamma_{n_{j}})-y_{Q}\right\Vert _{L^{2}(Q)}
}
\\
&\rightarrow0\text{ as }j\rightarrow+\infty,
\end{align*}
because $S(\gamma_{n_{j}})\to S(\bar\gamma)$ in $L^2(Q)$. 

By Lebesgue's dominated convergence theorem
\begin{align*}
\int_{\frac{\tau_{n_{j}}}{2}-\frac{r}{2}}^{\frac{\tau_{n_{j}}}{2}+\frac{r}{2}%
}\int_{\Omega}|S(\bar{\gamma})-y_{Q}|^{2} \d x \d t&=\int_{0}^{T}\int_{\Omega
}\mathds{1}_{\left[  \frac{\tau_{n_{j}}}{2}-\frac{r}{2},\frac{\tau_{n_{j}}}%
{2}+\frac{r}{2}\right]  }(t)\ |S(\bar{\gamma})-y_{Q}|^{2}(x,t) \d x \d t\\
&\rightarrow\int_{0}^{T}\int_{\Omega}\mathds{1}_{\left[  \frac{\bar{\tau}}%
{2}-\frac{r}{2},\frac{\bar{\tau}}{2}+\frac{r}{2}\right]  }(t)\ |S(\bar{\gamma
})-y_{Q}|^{2}(x,t) \d x \d t
\end{align*}
so that the term on the second line of \eqref{NV3} tends to zero as $j \to + \infty$. 
Consequently,
the term on the first line of \eqref{NV3} converges to zero as $j \to + \infty$
and this shows \eqref{NV1}.
The proof of \eqref{NV2} is similar. This proves \eqref{NV0}. 	
	By the definition of $L$ in \eqref{Ldef},  
	we obtain the reverse inequality, so that $\textstyle{L=\hat J_r(\bar\gamma,\bar\tau)}.$ 
\end{proof}
\begin{proposition}\cite{HNPW} \label{cfdrdc}
	The reduced cost functional $\hat{J}_r$
		is continuously Fréchet differentiable. For every $(\gamma,\tau)$ in $H^1(0,T;\R^2) \times [3r,T]$, we have
		\begin{equation*}
			D_{(\gamma,\tau)} \hat J_r(\gamma,\tau)\cdot(\delta\gamma,\delta \tau)=D_\gamma \hat J_r(\gamma,\tau)\cdot\delta\gamma+D_\tau \hat J_r(\gamma,\tau)\cdot\delta\tau
		\end{equation*}
		for every $(\delta \gamma,\delta \tau) \in H^{1}(0,T;\mathbb{R}^{2})\times \mathbb{R}_{+}([3r,T]-\tau)$ with $D_\gamma \hat J_r(\gamma,\tau)$ and $D_\tau \hat J_r(\gamma,\tau)$ given by
	\begin{equation}\label{dervingamma}
		\begin{aligned}
			D_\gamma \hat J_r(\gamma,\tau)\cdot\delta\gamma  &=\textstyle{\frac{\lambda_Q}{r}\int_{\frac{\tau}{2}-\frac{r}{2}}^{\frac{\tau}{2}+\frac{r}{2}}\int_\Omega}\big(S(\gamma)(x,t)-y_Q(x,t)\big)\cdot  DS(\gamma)\delta\gamma(x,t) \d x \d t \\&\quad +\textstyle{\frac{\lambda_\Omega}{r}\int_{\tau -r}^{\tau}\int_\Omega} \big(S(\gamma)(x,t)-y_\Omega(x,t)\big)\cdot DS(\gamma)\delta\gamma(x,t)  \d x \d t \\ &\quad+\textstyle{\int_0^T\int_\Omega}  \nabla S(\gamma)(x,t)  \cdot\nabla (DS(\gamma)\delta\gamma)(x,t)  \d x \d t
			\\&\quad
			+{\lambda_\gamma}(\gamma,\delta\gamma)_{H^1(0,T;\mathbb{R}^2)}\  ,
	\end{aligned}\end{equation}
and
	\begin{equation}\label{dervintime}
			\begin{aligned}
				D_\tau \hat J_r(\gamma,\tau)
		&=
\frac{\lambda_Q}{4 r}\Big[\int_{\Omega}|S(\gamma)-y_Q|^2(x,{\tfrac{\tau}{2}+\tfrac{r}{2}}) \d x-\int_\Omega |S(\gamma)-y_Q|^2(x,{\tfrac{\tau}{2}-\tfrac{r}{2}}) \d x\Big]
\\ &\quad
+  \frac{\lambda_\Omega}{2 r}\Big[\int_{\Omega}|S(\gamma)-y_\Omega|^2(x,\tau)  \d x  -\int_\Omega |S(\gamma)-y_\Omega|^2(x,\tau-r)  \d x\Big]+ \beta_T.
\end{aligned} \end{equation} 
\end{proposition}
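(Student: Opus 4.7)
The plan is to apply the chain rule to the composition $\hat{J}_r(\gamma,\tau)=J_r(S(\gamma),\gamma,\tau)$, leveraging the continuous Fr\'echet differentiability of the path-to-state map $S$ established in Theorem \ref{FDG}. I would first show that $J_r\colon W(0,T)\times H^1(0,T;\mathbb{R}^2)\times[3r,T]\to\mathbb{R}$ is continuously Fr\'echet differentiable in its three arguments separately (with one-sided derivatives in $\tau$ at the endpoints $3r$ and $T$), then combine via the chain rule to obtain \eqref{dervingamma} and \eqref{dervintime}.

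For the derivative with respect to $y\in W(0,T)$, the three integral terms in \eqref{eq:costfunctional} are continuous quadratic (respectively Dirichlet-type) forms. Using the embeddings $W(0,T)\hookrightarrow L^2(Q)$ and $W(0,T)\hookrightarrow L^2(0,T;H^1(\Omega))$, each term defines a continuous quadratic functional whose derivative is immediate. Similarly, the $\gamma$-dependence enters only through the $H^1$-seminorm, yielding the contribution $\lambda_\gamma(\gamma,\delta\gamma)_{H^1(0,T;\mathbb{R}^2)}$; and $\beta_T\tau$ trivially contributes $\beta_T$ to the $\tau$-derivative.

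The nontrivial point is the dependence on $\tau$ through the limits of integration in the $\lambda_Q$- and $\lambda_\Omega$-terms. Writing for instance
\begin{equation*}
\Phi_Q(\tau)=\int_{\frac{\tau}{2}-\frac{r}{2}}^{\frac{\tau}{2}+\frac{r}{2}} F_Q(t)\,\d t,\qquad F_Q(t):=\int_\Omega |y(x,t)-y_Q(x,t)|^2 \dx,
\end{equation*}
I would apply Leibniz's rule to obtain $\Phi_Q'(\tau)=\tfrac{1}{2}\bigl[F_Q(\tfrac{\tau}{2}+\tfrac{r}{2})-F_Q(\tfrac{\tau}{2}-\tfrac{r}{2})\bigr]$, which is legitimate provided $F_Q$ is continuous on $[0,T]$. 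Continuity follows from $W(0,T)\hookrightarrow C([0,T];L^2(\Omega))$ together with the standing assumption $y_Q\in C([0,T];L^2(\Omega))$, so that $t\mapsto y(\cdot,t)-y_Q(\cdot,t)$ is continuous into $L^2(\Omega)$. The analogous argument handles the $\lambda_\Omega$-term and produces precisely \eqref{dervintime}.

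Finally, combining the three partial derivatives through the chain rule, and inserting $DS(\gamma)\delta\gamma$ for the $y$-increment, delivers \eqref{dervingamma}. For the \emph{continuity} of the derivative, each ingredient depends continuously on $(\gamma,\tau)$: $S$ and $DS$ by Theorem \ref{FDG}, the pairings by continuity of the bilinear forms, and the pointwise evaluations $F_Q(\tfrac{\tau}{2}\pm\tfrac{r}{2})$, $F_\Omega(\tau)$, $F_\Omega(\tau-r)$ again via the embedding $W(0,T)\hookrightarrow C([0,T];L^2(\Omega))$ (which controls joint continuity in $(\gamma,\tau)$ of these time-slice evaluations). I expect the main technical obstacle to be exactly this Leibniz-rule step and the accompanying verification of temporal continuity of $F_Q,F_\Omega$, since the rest of the calculation is routine chain-rule bookkeeping.
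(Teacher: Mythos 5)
Your proposal is correct and follows the natural route: the paper itself states Proposition \ref{cfdrdc} without proof, citing \cite{HNPW}, and the argument there is exactly the chain rule $\hat J_r = J_r(S(\cdot),\cdot,\cdot)$ combined with the $C^1$-regularity of $S$ from Theorem \ref{FDG}, the Leibniz rule for the $\tau$-dependent integration limits, and the embedding $W(0,T)\hookrightarrow C([0,T];L^2(\Omega))$ together with $y_Q, y_\Omega \in C([0,T];L^2(\Omega))$ to justify the continuity of the time slices $F_Q, F_\Omega$ and of the resulting partial derivatives. Your computation reproduces \eqref{dervingamma} and \eqref{dervintime} (including the factors $\tfrac{\lambda_Q}{4r}$ and $\tfrac{\lambda_\Omega}{2r}$), and you correctly identify the one-sided nature of the $\tau$-derivative at the endpoints of $[3r,T]$ and the fact that the $\gamma$-contribution is the seminorm pairing $\lambda_\gamma\int_0^T\dot\gamma\cdot\dot{\delta\gamma}\,\d t$; no gaps.
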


\begin{definition} \cite{HNPW}
The adjoint system of our problem is the linear backward parabolic boundary value problem
	\begin{equation}\label{eqn:adjointsystem}
		\begin{aligned}
			\rho c	 p_{t}  +\kappa\Delta p&=\Delta \bar y-\frac{1}{r}\big(\lambda_Q		\mathds{1}_{[{\frac{\bar\tau}{2}-\frac{r}{2}},{\frac{\bar\tau}{2}+\frac{r}{2}}]}(t)(\bar y-y_Q) +\lambda_\Omega\mathds{1}_{[\bar{\tau}-r,\bar{\tau}]}(t) 		 (\bar y-y_\Omega)\big)&& \text{in }  Q,\\
			\kappa \partial_\nu p+hp &= \partial_\nu \bar y && \text{on } \Sigma, \\
			p(\cdot,T) &= 0 && \text{in } \Omega,
		\end{aligned}
\end{equation} 
where $\Sigma :=\Gamma\times]0,T[$.
\end{definition}
\begin{definition} \cite{HNPW} \label{defweakform}
	Let $(\bar\gamma,\bar\tau)$ be an optimal control of \eqref{ROCP} with associated state $\bar{y}=S(\bar\gamma)$. A function $p\in W(0,T)$ is said to be a weak solution to the adjoint problem \eqref{eqn:adjointsystem}, if $p(\cdot,T)=0$ in $\Omega$ and
	\begin{equation}\label{varformofAdjpb}\begin{aligned}
			\MoveEqLeft
			-\rho c\int_0^T \langle p_t(.,t),\ v(.,t)\rangle _{H^1(\Omega)^{\star},H^1(\Omega)}\d t
			+
			\kappa\iint_Q \nabla p\cdot\nabla v  \d x \d t 
			+h\iint_\Sigma p v \d S(x)\d t
			\\ &=
			\iint_Q\nabla S(\bar\gamma)\cdot\nabla v \d x \d t
			+\frac{\lambda_Q}{r}\int_{\frac{\bar\tau}{2}-\frac{r}{2}}^{\frac{\bar\tau}{2}+\frac{r}{2}}\int_\Omega (S(\bar\gamma)-y_Q) v \d x \d t
			\\&\qquad
			+\frac{\lambda_\Omega}{r}\int_{\bar\tau-r}^{\bar\tau}\int_\Omega (S(\bar\gamma)-y_\Omega)(x,t) v(x,t) \d x \d t,
	\end{aligned}\end{equation} for every $v\in L^2(0,T;H^1(\Omega)).$ \end{definition}
\begin{theorem} \cite{HNPW}
	Let $(\bar\gamma,\bar\tau)$ be an optimal control and $\bar y=S(\bar \gamma)$  be the corresponding state.
	Then the backward parabolic problem \eqref{eqn:adjointsystem}  has a unique weak solution $p\in W(0,T)$.
\end{theorem}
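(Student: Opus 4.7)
The plan is to reduce the backward problem to a forward one by time reversal, and then invoke the same linear parabolic well-posedness result that was used for the state equation. Concretely, set $q(x,t) := p(x,T-t)$; then the variational identity \eqref{varformofAdjpb} for $p$ on $[0,T]$ becomes an analogous forward variational identity for $q$ with zero initial condition $q(\cdot,0)=0$, in which each characteristic function $\mathds{1}_{[\alpha,\beta]}(t)$ is replaced by $\mathds{1}_{[T-\beta,T-\alpha]}(t)$, and $\bar y$ by the time-reversed state $\tilde y(x,t):=\bar y(x,T-t)$.

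Next, I would identify the right-hand side as a continuous linear functional on $L^2(0,T;H^1(\Omega))$. Since $\bar y = S(\bar\gamma)\in W(0,T) \subset L^2(0,T;H^1(\Omega))$, the gradient term $(v\mapsto \iint_Q \nabla\tilde y\cdot\nabla v \,\d x\d t)$ defines an element of $L^2(0,T;H^1(\Omega)^\star)$; and since $\bar y \in C([0,T];L^2(\Omega))$ with $y_Q,y_\Omega\in C([0,T];L^2(\Omega))$, the two zeroth-order terms lie in $L^2(0,T;L^2(\Omega)) \hookrightarrow L^2(0,T;H^1(\Omega)^\star)$. Thus the forward problem for $q$ has the form
\begin{equation*}
\rho c\, \langle q_t,v\rangle_{H^1(\Omega)^\star,H^1(\Omega)} + \kappa \int_\Omega \nabla q\cdot\nabla v\,\d x + h\int_\Gamma q v\,\d S = \langle F(t),v\rangle
\end{equation*}
for every $v\in H^1(\Omega)$, with $F\in L^2(0,T;H^1(\Omega)^\star)$ and $q(0)=0$.

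Applying the general existence and uniqueness theorem for linear parabolic initial boundary value problems with right-hand side in $L^2(0,T;H^1(\Omega)^\star)$ and Robin boundary condition (the very result invoked in Section~\ref{subsec:PDE} for the state equation, e.g.\ \cite[pp.512-513]{9} or \cite[Ch.11]{Chi}), one gets a unique $q\in W(0,T)$. Setting $p(x,t):=q(x,T-t)$ yields the desired weak solution of \eqref{eqn:adjointsystem}, and uniqueness of $p$ transfers directly from that of $q$.

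The only subtlety — and the thing most likely to need care — is the correct interpretation of the terms $\Delta\bar y$ in the PDE and $\partial_\nu \bar y$ on the boundary, which do not make literal sense for $\bar y\in W(0,T)$. These two ill-defined expressions are exactly reassembled, upon testing, into the single continuous functional $v\mapsto \iint_Q \nabla\bar y\cdot\nabla v\,\d x\d t$ that appears in Definition~\ref{defweakform}; checking this reassembly and the resulting $L^2(0,T;H^1(\Omega)^\star)$-regularity of $F$ is the main technical point, after which the cited parabolic well-posedness result closes the argument.
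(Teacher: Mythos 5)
Your argument is correct and is the standard route: the paper itself does not reprove this theorem (it is only recalled from \cite{HNPW}), and the expected proof is exactly your time reversal $q(\cdot,t)=p(\cdot,T-t)$ plus the observation that the right-hand side of \eqref{varformofAdjpb} defines an element of $L^2(0,T;H^1(\Omega)^\star)$ (the gradient term because $\nabla\bar y\in L^2(Q)^3$, the zeroth-order terms because $\bar y, y_Q, y_\Omega \in C([0,T];L^2(\Omega))$), after which the same Lions-type well-posedness result \cite[pp.512-513]{9} or \cite[Ch.11]{Chi} used for the state equation applies. Your closing remark is also the right reading of Definition~\ref{defweakform}: the formal terms $\Delta\bar y$ and $\partial_\nu\bar y$ are given meaning only through the combined functional $v\mapsto\iint_Q\nabla\bar y\cdot\nabla v\,\dx\d t$, so no extra regularity of $\bar y$ is needed.
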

We start by introducing the tangent cone.
\begin{definition} \cite{HNPW} \cite[Section~2.2.4]{BoSh} \label{def:caddir}
	Let $V$ be a normed vector space and $U$ a non-empty subset of $V$. For every $u\in U$, the tangent cone at $u$ is
	\begin{equation} \label{caddir}
		\TT_U(u) := \set*{d\in V;\exists ( u_{n} ) \subset U,
			\, u_n \to u,
			\, \exists (t_n) \subset \R_+^\ast,
			\, t_n \to 0,
			\frac{u_n - u}{t_n} \to d
		}.
	\end{equation}
\end{definition}
This cone is always closed but not necessarily convex, see \cite[p.45]{BoSh}.
\begin{proposition} \cite{HNPW} \label{NOC}
	If $(\bar \gamma,\bar\tau) \in \Uad \times  [3r,T]$ is an optimal control to the minimizing problem \eqref{ROCP} with associated state $\bar y$ and $p\in W(0,T)$ the corresponding adjoint state that solves \eqref{eqn:adjointsystem}, then $(\bar \gamma,\bar\tau)$ verifies the variational inequalities
	\begin{equation}\label{varineq}
		\begin{aligned}
			D_\gamma \hat J_r(\bar\gamma,\bar\tau)\cdot\delta\bar\gamma
		&=
	c_R\textstyle\iint_{\Sigma_1}e^{w(\bar\gamma)(x,t)}\tilde\gamma(\bar\gamma)(x,t)\cdot(\delta\bar\gamma)(t)p(x,t) \d S(x)\d t
\\&\qquad
+ {\lambda_\gamma}\textstyle\int_0^T{\bar\gamma'(t)\cdot\delta\bar\gamma'(t)}\d t
\\&
\geq 0, \quad \text{ for all } \delta\bar\gamma\in \TT_{\Uad}(\bar\gamma),
\end{aligned}
	\end{equation}
	where $c_R:=\frac{8 \alpha P}{\pi R^4}$, $\TT_{\Uad}(\bar\gamma)$ denotes the tangent cone to $\Uad$ at $\bar{\gamma}$ defined by \eqref{caddir} with $V=H^{1}(0,T;\mathbb{R}^{2})$,
	and
	\begin{equation} \label{varineq2}
		\begin{aligned}D_\tau \hat{J}_r(\bar\gamma,\bar\tau)(s-\bar\tau)
	\geq 0 \text{ for all } s\in [3r,T],\end{aligned} \end{equation}
	where $D_\tau \hat{J}_r(\bar\gamma,\bar\tau)$ was given in \eqref{dervintime}.
\end{proposition}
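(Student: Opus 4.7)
The idea is to combine two standard ingredients: first-order necessary conditions for a Fr\'echet differentiable functional restricted to the product set $\Uad\times[3r,T]$, and the elimination of $DS(\bar\gamma)\delta\gamma$ from the derivative formula for $\hat J_r$ via duality with the adjoint state $p$.

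\textbf{Step 1 (raw first-order conditions).} Since $\hat J_r$ is continuously Fr\'echet differentiable by Proposition~\ref{cfdrdc} and $(\bar\gamma,\bar\tau)$ minimizes it on $\Uad\times[3r,T]$, I first fix $\bar\tau$ and vary $\gamma$. For any $d\in\TT_{\Uad}(\bar\gamma)$, Definition~\ref{def:caddir} supplies $\gamma_n\in\Uad$ and $t_n\downarrow 0$ with $(\gamma_n-\bar\gamma)/t_n\to d$; writing $0\leq \hat J_r(\gamma_n,\bar\tau)-\hat J_r(\bar\gamma,\bar\tau) = D_\gamma\hat J_r(\bar\gamma,\bar\tau)\cdot(\gamma_n-\bar\gamma)+o(\|\gamma_n-\bar\gamma\|)$, dividing by $t_n>0$ and passing to the limit yields $D_\gamma\hat J_r(\bar\gamma,\bar\tau)\cdot d\geq 0$. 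Convexity of $[3r,T]$ lets me feed $\tau=\bar\tau+\theta(s-\bar\tau)$, $\theta\in(0,1]$, into the analogous argument and obtain $D_\tau\hat J_r(\bar\gamma,\bar\tau)(s-\bar\tau)\geq 0$ for every $s\in[3r,T]$, which is \eqref{varineq2}.

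\textbf{Step 2 (adjoint identity).} To rewrite $D_\gamma\hat J_r(\bar\gamma,\bar\tau)\cdot\delta\gamma$ through $p$, I test the weak adjoint formulation \eqref{varformofAdjpb} with $v:=\delta y_2:=DS(\bar\gamma)\delta\gamma\in W(0,T)\hookrightarrow L^2(0,T;H^1(\Omega))$. Comparing the three terms on the right-hand side of \eqref{varformofAdjpb} with formula \eqref{dervingamma} (where $\delta y_2$ appears linearly in exactly those three contributions) shows that this right-hand side equals $D_\gamma\hat J_r(\bar\gamma,\bar\tau)\cdot\delta\gamma - \lambda_\gamma(\bar\gamma,\delta\gamma)_{H^1(0,T;\mathbb{R}^2)}$. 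The left-hand side of \eqref{varformofAdjpb} is handled by an integration by parts in time: since $p,\delta y_2\in W(0,T)\hookrightarrow C([0,T];L^2(\Omega))$ with $p(\cdot,T)=0$ and $\delta y_2(\cdot,0)=0$, the standard $W(0,T)$ formula gives $-\rho c\int_0^T\langle p_t,\delta y_2\rangle\,dt = \rho c\int_0^T\langle(\delta y_2)_t,p\rangle\,dt$. Then I test the variational formulation of \eqref{ivbp} for $\delta y_2$ with $\xi=p(\cdot,t)$ and integrate in $t$, so that the last quantity equals $-\kappa\iint_Q\nabla\delta y_2\cdot\nabla p\,dx\,dt - h\iint_\Sigma p\,\delta y_2\,dS\,dt + \iint_{\Sigma_1}\delta v\cdot p\,dS\,dt$ with $\delta v:=c_R e^{w(\bar\gamma)}\tilde\gamma(\bar\gamma)\cdot\delta\gamma$ read off from \eqref{diffOfG}--\eqref{sigop}. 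Combined with the two remaining terms in \eqref{varformofAdjpb}, this collapses the left-hand side exactly to $\iint_{\Sigma_1}\delta v\cdot p\,dS\,dt$. Substituting the explicit $\delta v$, and recalling (Remark after \eqref{iglb}) that the cost functional uses the $H^1$-seminorm so that only $\int_0^T\bar\gamma'\cdot\delta\gamma'\,dt$ survives from $(\bar\gamma,\delta\gamma)_{H^1(0,T;\mathbb{R}^2)}$, I obtain the equality part of \eqref{varineq}; combined with Step~1 this yields the claimed nonnegativity.

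\textbf{Expected obstacle.} No step is analytically deep; the difficulty is purely bookkeeping. The critical points are to match the three target/gradient contributions on the right-hand side of \eqref{varformofAdjpb} with the three state-derivative terms in \eqref{dervingamma}, and to justify the time integration by parts for $W(0,T)$-functions via the classical identity $[(p,\delta y_2)_{L^2(\Omega)}]_0^T = \int_0^T\langle p_t,\delta y_2\rangle\,dt + \int_0^T\langle(\delta y_2)_t,p\rangle\,dt$, which is legitimate because $p_t,(\delta y_2)_t\in L^2(0,T;H^1(\Omega)^\star)$ and $W(0,T)\hookrightarrow C([0,T];L^2(\Omega))$.
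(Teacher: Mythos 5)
Your proposal is correct. Note that the paper itself gives no proof of Proposition \ref{NOC}: it is recalled verbatim from \cite{HNPW}, so there is nothing in this manuscript to compare against line by line. Your two-step argument — (i) the elementary tangent-cone/convexity derivation of $D_\gamma \hat J_r(\bar\gamma,\bar\tau)\cdot d\geq 0$ for $d\in\TT_{\Uad}(\bar\gamma)$ and of \eqref{varineq2}, and (ii) testing the adjoint weak formulation \eqref{varformofAdjpb} with $v=DS(\bar\gamma)\delta\gamma$, integrating by parts in time using $p(\cdot,T)=0$ and $\delta y_2(\cdot,0)=0$, and invoking the variational form of \eqref{ivbp} with $\delta v = c_R e^{w(\bar\gamma)}\tilde\gamma(\bar\gamma)\cdot\delta\gamma$ to collapse the left-hand side to $\iint_{\Sigma_1}\delta v\, p \dx[S]\d t$ — is exactly the standard route such a result is established by, and all the bookkeeping (matching the three target/gradient terms of \eqref{varformofAdjpb} with the first three terms of \eqref{dervingamma}, and the seminorm versus full $H^1$ inner product) checks out.
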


Let us now recall the main second order results obtained in \cite{HNPW}. 

Firstly, we have proved that the state mapping $S$ is two times continuously Fréchet differentiable from $H^1(0,T;\mathbb{R}^2)$ into $W(0,T)$ and have given the expression of its second order Fréchet derivative (let us recall that in \cite{HNPW}, the state mapping is denoted $G$ but to avoid confusion with \eqref{defopG}, we have denoted it $S$ in this paper). More precisely, we have proved the following proposition:
\begin{proposition} \cite{HNPW}
The control to state map $S$ is twice continuously Fréchet differentiable from $H^{1}(0,T;\mathbb{R}^2)$ into $W(0,T)$. The second derivative $D^{2}S(\gamma)$ at point $\gamma \in H^{1}(0,T;\mathbb{R}^2)$ is the bilinear continuous mapping which sends $(\delta \gamma_1,\delta \gamma_2) \in [H^{1}(0,T;\mathbb{R}^2)]^{2}$ to $\frac{8\alpha P}{\pi R^4}y$, where $y=y(\delta \gamma_1,\delta \gamma_2)\in W(0,T)$ is the weak solution of
\begin{equation}\label{addivbp2}
	\begin{cases}
		\rho c \partial_t y - \kappa \Delta y = 0 \qquad  \quad \ \   &\text{ in }   Q,\\
		\kappa \partial_\nu y + hy=e^{w(\gamma)}(\frac{4}{R^2}\tilde\gamma(\gamma).\delta\gamma_1\tilde\gamma(\gamma).\delta\gamma_2-\delta\gamma_1.\delta\gamma_2) \quad   &\text{ on }   \Sigma_1, \\
		\kappa \partial_\nu y + hy=0 \quad\quad \ \ \ &  \text{ on }   \Sigma_2 \cup \Sigma_3,\\
		y(\cdot,0) = 0\qquad  \qquad  &\text{ in  }\  \Omega.
	\end{cases}
\end{equation}
\end{proposition}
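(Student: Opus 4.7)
The plan is to differentiate the first-derivative formula from Theorem \ref{FDG} once more. Recall that $DS(\gamma)\cdot\delta\gamma_1 = c_R\,\sigma\bigl(e^{w(\gamma)}\tilde\gamma(\gamma)\cdot\delta\gamma_1\bigr)$ with $c_R=8\alpha P/(\pi R^4)$ and $\sigma \colon L^2(\Sigma_1)\to W(0,T)$ continuous linear. Since $\sigma$ is linear and independent of $\gamma$, it suffices to show that the mapping
\begin{equation*}
F\colon H^1(0,T;\R^2)\to\mathcal{L}(H^1(0,T;\R^2),L^2(\Sigma_1)),
\quad F(\gamma)[\delta\gamma_1] := e^{w(\gamma)}\tilde\gamma(\gamma)\cdot\delta\gamma_1,
\end{equation*}
is continuously Fréchet differentiable and to identify its derivative; composing with $c_R\sigma$ and invoking the definition of $\sigma$ as the solution operator of the parabolic Robin problem then yields the weak solution $y$ of \eqref{addivbp2}.

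First I would compute the directional derivative in $\delta\gamma_2$ formally. From $\tilde\gamma(\gamma)(x,t)=x-\gamma(t)$ and $w(\gamma)=-\tfrac{2}{R^2}|\tilde\gamma(\gamma)|^2$, we get $D_\gamma\tilde\gamma(\gamma)[\delta\gamma_2]=-\delta\gamma_2(t)$, $D_\gamma w(\gamma)[\delta\gamma_2]=\tfrac{4}{R^2}\tilde\gamma(\gamma)\cdot\delta\gamma_2$, and $D_\gamma e^{w(\gamma)}[\delta\gamma_2]=e^{w(\gamma)}\tfrac{4}{R^2}\tilde\gamma(\gamma)\cdot\delta\gamma_2$. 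The Leibniz rule then gives
\begin{equation*}
D_\gamma F(\gamma)[\delta\gamma_2,\delta\gamma_1]
= e^{w(\gamma)}\Bigl(\tfrac{4}{R^2}(\tilde\gamma(\gamma)\cdot\delta\gamma_1)(\tilde\gamma(\gamma)\cdot\delta\gamma_2)-\delta\gamma_1\cdot\delta\gamma_2\Bigr),
\end{equation*}
which is exactly the boundary datum on $\Sigma_1$ prescribed in \eqref{addivbp2}. By symmetry in $(\delta\gamma_1,\delta\gamma_2)$, this also confirms that the candidate second derivative is a symmetric bilinear form, as it should be.

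Next I would make this rigorous. The embedding $H^1(0,T;\R^2)\hookrightarrow C([0,T];\R^2)$ together with the boundedness of $\Gamma_1$ ensures that $\tilde\gamma(\gamma)$, $w(\gamma)\leq 0$, and hence $e^{w(\gamma)}$ are uniformly bounded in $L^\infty(\Sigma_1)$ on bounded subsets of $H^1(0,T;\R^2)$, and depend locally Lipschitz-continuously on $\gamma$ in the $L^\infty$-norm. Applying a Taylor expansion of $s\mapsto e^s$ about $w(\gamma)$, the remainder $e^{w(\gamma+\delta\gamma_2)}-e^{w(\gamma)}-e^{w(\gamma)}D_\gamma w(\gamma)[\delta\gamma_2]$ is $o(\|\delta\gamma_2\|_{H^1})$ in $L^\infty(\Sigma_1)$. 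Combining this with the polynomial estimates for $\tilde\gamma(\gamma)\cdot\delta\gamma_i$ in $C([0,T];\R^2)$ and estimating the $L^2(\Sigma_1)$-norm uniformly over $\delta\gamma_1$ of unit $H^1$-norm yields Fréchet differentiability of $F$. Continuous dependence of $\gamma\mapsto D_\gamma F(\gamma)$ in the operator norm follows by applying the same uniform bounds to differences $D_\gamma F(\gamma)-D_\gamma F(\gamma')$.

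The main technical obstacle is precisely this last step: converting the formal pointwise Leibniz computation into genuine Fréchet differentiability of $F$ valued in the operator space $\mathcal{L}(H^1(0,T;\R^2),L^2(\Sigma_1))$, since $F$ depends nonlinearly on $\gamma$ through the exponential. Once the $L^\infty(\Sigma_1)$-control of the exponential remainder is available, the remaining factors are polynomial in $\tilde\gamma(\gamma)$ and $\delta\gamma_i$, and are estimated using the embedding $H^1(0,T;\R^2)\hookrightarrow C([0,T];\R^2)$ and the finite measure of $\Sigma_1$.
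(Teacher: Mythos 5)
Your proposal is correct and follows the natural route that the paper's own setup dictates: the paper only recalls this proposition from \cite{HNPW} without reproving it, but since Theorem \ref{FDG} writes $DS(\gamma)\delta\gamma_1=c_R\,\sigma\bigl(e^{w(\gamma)}\tilde\gamma(\gamma)\cdot\delta\gamma_1\bigr)$ with $\sigma$ a fixed bounded linear solution operator, differentiating the superposition factor once more in $\mathcal{L}(H^1(0,T;\R^2),L^2(\Sigma_1))$ and pushing the result through $\sigma$ is exactly the intended argument, and your Leibniz computation reproduces the boundary datum of \eqref{addivbp2} precisely. Your handling of the rigorous step (uniform $L^\infty(\Sigma_1)$ bounds via $H^1(0,T;\R^2)\hookrightarrow C([0,T];\R^2)$, $w\le 0$, and the finite measure of $\Sigma_1$, noting that $w$ is quadratic in $\gamma$ so the exponential remainder is genuinely $O(\norm{\delta\gamma_2}^2)$) is sound.
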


Secondly, we have proved in \cite{HNPW}, that under the additional assumptions $\dot{y}_{B}\in L^{2}(\Sigma_{3})$,
$y_{0}\in L^{2}(\Omega)$, $D_{t}y_{Q}\in C([r,\frac{2}{3}T];L^{2}(\Omega))$,
$D_{t}y_{\Omega}\in C([2r,T];L^{2}(\Omega))$, that $D_tS(\gamma) \in C([\varepsilon,T];L^2(\Omega))$ for every $\epsilon >0$, that the mapping $H^1(0,T;\mathbb{R}^2) \rightarrow C([\varepsilon,T];L^2(\Omega)): \gamma \mapsto D_t(S(\gamma)|_{[\varepsilon,T]})$ is continuous and that the reduced cost
functional
\[
\hat{J}_{r}:H^{1}(0,T;\mathbb{R} ^{2})\times\lbrack3r,T]\rightarrow\mathbb{R} :\left(  \gamma,\tau\right)  \longmapsto\hat{J}_{r}\left(  \gamma,\tau\right)
\]
is two times continuously Fr\'{e}chet differentiable. We also have the following expressions for the second order Fréchet derivatives of the reduced cost functional $\hat{J}_r$  \cite{HNPW}
\begin{equation} \label{DgghatJr}
	\begin{aligned}
		\MoveEqLeft
		D_{\gamma\gamma}  \hat J_r(\gamma,\tau)\cdot[\delta\gamma_1,\delta\gamma_2]
		\\
		&=
		\frac{\lambda_Q}{r}\int_{\frac{\tau}{2}-\frac{r}{2}}^{\frac{\tau}{2}+\frac{r}{2}}\int_\Omega\Big((S(\gamma)-y_Q) D_{\gamma\gamma}S(\gamma)\cdot[\delta\gamma_1,\delta\gamma_2] +v(\gamma,\delta\gamma_2) .v(\gamma,\delta\gamma_1)  \Big) \d x \d t 
		\\
		&\quad
		+\frac{\lambda_\Omega}{r}\int_{\tau -r}^{\tau}\int_\Omega  \Big((S(\gamma)-y_\Omega) D_{\gamma\gamma}S(\gamma)\cdot[\delta\gamma_1,\delta\gamma_2] +v(\gamma,\delta\gamma_2) .v(\gamma,\delta\gamma_1)  \Big)  \d x \d t 
		\\
		&\quad
		+\int_0^T\int_\Omega\Big(  \nabla v(\gamma,\delta\gamma_2)  .\nabla v(\gamma,\delta\gamma_1) + \nabla S(\gamma) .\nabla \Big(D_{\gamma\gamma}S(\gamma).[\delta\gamma_1,\delta\gamma_2]\Big) \Big)  \d x \d t 
		\\
		&\quad
		+{\lambda_\gamma}\int_0^T \frac{\d\delta\gamma_1}{\d t}\cdot\frac{\d\delta\gamma_2}{\d t} \d t, 
\end{aligned}\end{equation}
\begin{equation} \label{DtghatJr}
\begin{aligned}
	\MoveEqLeft
	D_{\tau\gamma}  \hat J_r(\gamma,\tau).[\delta\gamma,\delta\tau]=D_{\gamma \tau}  \hat J_r(\gamma,\tau).[\delta\tau,\delta\gamma]
	\\
	&=
	\frac{\lambda_Q\delta\tau}{2r}\int_\Omega\Big((S(\gamma)-y_Q)(x,\textstyle\frac{\tau}{2}+\frac{r}{2})v(\gamma,\delta\gamma)(x,\textstyle\frac{\tau}{2}+\frac{r}{2}) 
		\\&\qquad\qquad\qquad
	-(S(\gamma)-y_Q)(x,\textstyle\frac{\tau}{2}-\frac{r}{2})v(\gamma,\delta\gamma)(x,\textstyle\frac{\tau}{2}-\frac{r}{2})\Big) \d x
	\\
	&\qquad
	+\frac{\lambda_\Omega\delta\tau}{r}\int_\Omega\Big((S(\gamma)-y_{\Omega})(x,\tau)v(\gamma,\delta\gamma)(x,\tau)
		\\&\qquad\qquad\qquad
		-
	(S(\gamma)-y_\Omega)(x,\tau-r)v(\gamma,\delta\gamma)(x,\tau-r)\Big)   \d x, 
	\end{aligned}\end{equation}
and
\begin{equation}\label{sectideriv}
\begin{aligned}
	\MoveEqLeft
	D_{\tau\tau}\hat{J}_{r}(\gamma,\tau).[\delta\tau_{1},\delta\tau_{2}%
	]
	\\
	&=\tfrac{\lambda_{Q}\delta\tau_1\delta\tau_2}{4r}\Big(\int_{\Omega}(S(\gamma)-y_{Q})(x,\textstyle\frac{\tau}{2}+\frac{r}{2})D_{t}(S(\gamma)-y_{Q})(x,\textstyle\frac{\tau}{2}+\frac{r}{2}) \d x\\
	&\qquad\qquad\qquad-\int_\Omega(S(\gamma)-y_{Q})(x,\textstyle\frac{\tau}{2}-\frac{r}%
	{2})D_{t}(S(\gamma)-y_{Q})(x,\textstyle\frac{\tau}%
	{2}-\frac{r}{2}) \d x\Big)\\
	&\quad+\tfrac{\lambda_{\Omega}\delta\tau_1\delta\tau_2}{2r}\Big(\int_{\Omega}(S(\gamma)-y_{\Omega})%
	(x,\tau)D_{t}(S(\gamma)-y_{\Omega})(x,\tau)\\
	&\qquad\qquad\qquad
	-	\int_\Omega(S(\gamma)-y_{\Omega})(x,\tau-r)D_{t}(S(\gamma)-%
	y_{\Omega})(x,\tau-r) \d x\Big).
\end{aligned}
\end{equation}	
In these formulas $v(\gamma,\delta\gamma):=DS(\gamma)\delta\gamma$.
As an application of these second-order regularity results, we have given in \cite{HNPW} in case $\Gamma_S$ is a closed convex subset of $\Gamma_1 \subset \mathbb{R}^2$, for $(\bar\gamma,\bar\tau) \in \Uad\times [3r,T]$ satisfying the first order necessary conditions \eqref{varineq}-\eqref{varineq2}, a sufficient condition for $(\bar\gamma,\bar\tau)$ to be a strict local minimizer of the reduced cost functional $\hat{J}_{r}$.

\section{\textbf{Higher regularity of the trajectories for local minimizers}}
\label{sec:regularity}

In the next theorem, we prove the $H^2$-regularity of the trajectory $\bar{\gamma}$ of a \textit{local} minimizer $(\bar{\gamma
},\bar{\tau})$ of problem  \eqref{ROCP}.
We start with a lemma.

\begin{lemma} \label{cdrte}
The mapping $\mathcal{J}:H^{1}(0,T;\mathbb{R} ^{2})\rightarrow\mathbb{R} :\gamma\longmapsto\int_{0}^{T}[g(\gamma(t))]_{+}^{2} \d t$ is continuously
differentiable and
\[
D\mathcal{J}(\gamma)\delta\gamma=2\int_{0}^{T}[g(\gamma
(t))]_{+}\nabla g(\gamma(t))\cdot\delta\gamma(t) \d t,\text{ for every }%
\delta\gamma\in H^{1}(0,T;\mathbb{R} ^{2})\text{. }%
\]

\end{lemma}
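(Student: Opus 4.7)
The plan is to write $\mathcal{J}(\gamma)=\int_0^T \phi(\gamma(t))\,\d t$, where $\phi\colon\mathbb{R}^2\to\mathbb{R}$ is defined by $\phi(x):=[g(x)]_+^2$. First I would observe that the scalar function $s\mapsto[s]_+^2$ is continuously differentiable on $\mathbb{R}$ (with derivative $s\mapsto 2[s]_+$, the right and left one-sided derivatives agreeing at $s=0$). Since $g\in C^{3}(\mathbb{R}^2)$ is in particular $C^{1}$, the chain rule yields that $\phi\in C^{1}(\mathbb{R}^2;\mathbb{R})$ with $\nabla\phi(x)=2[g(x)]_+\nabla g(x)$. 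Because $H^{1}(0,T;\mathbb{R}^2)\hookrightarrow C([0,T];\mathbb{R}^2)$, both $\gamma(t)$ and $(\gamma+\delta\gamma)(t)$ live in a fixed bounded subset $K\subset\mathbb{R}^2$ as soon as $\|\delta\gamma\|_{H^1(0,T;\mathbb{R}^2)}\le 1$.

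Next I would establish Fréchet differentiability by Taylor's formula: for each $t$,
\[
\phi(\gamma(t)+\delta\gamma(t))-\phi(\gamma(t))=\nabla\phi(\gamma(t))\cdot\delta\gamma(t)+R(t),
\]
with
\[
|R(t)|\le\Big(\sup_{s\in[0,1]}\bigl|\nabla\phi(\gamma(t)+s\delta\gamma(t))-\nabla\phi(\gamma(t))\bigr|\Big)\,|\delta\gamma(t)|.
\]
Since $\nabla\phi$ is uniformly continuous on the compact set $K$, the supremum above is bounded by a modulus of continuity $\omega(\|\delta\gamma\|_{C([0,T];\mathbb{R}^2)})\to 0$ as $\|\delta\gamma\|_{H^1}\to 0$. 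Using Cauchy--Schwarz and the embedding $H^1\hookrightarrow L^2$,
\[
\Big|\int_0^T R(t)\,\d t\Big|\le \omega(\|\delta\gamma\|_{C([0,T];\mathbb{R}^2)})\,\sqrt{T}\,\|\delta\gamma\|_{L^2(0,T;\mathbb{R}^2)}=o\bigl(\|\delta\gamma\|_{H^1(0,T;\mathbb{R}^2)}\bigr).
\]
This identifies $D\mathcal{J}(\gamma)$ as the linear form $\delta\gamma\mapsto\int_0^T 2[g(\gamma(t))]_+\nabla g(\gamma(t))\cdot\delta\gamma(t)\,\d t$, which is continuous on $H^{1}(0,T;\mathbb{R}^2)$ by Cauchy--Schwarz and the boundedness of $\nabla\phi(\gamma(\cdot))$ on $[0,T]$.

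Finally, for continuity of $\gamma\mapsto D\mathcal{J}(\gamma)$, I would take $\gamma_n\to\gamma$ in $H^{1}(0,T;\mathbb{R}^2)$; by the compact embedding into $C([0,T];\mathbb{R}^2)$, $\gamma_n\to\gamma$ uniformly, and the continuity of $\nabla\phi$ combined with dominated convergence yields $\nabla\phi(\gamma_n(\cdot))\to\nabla\phi(\gamma(\cdot))$ in $L^{2}(0,T;\mathbb{R}^2)$. Consequently
\[
\|D\mathcal{J}(\gamma_n)-D\mathcal{J}(\gamma)\|_{H^{1}(0,T;\mathbb{R}^2)^{\star}}\le\sqrt{T}\,\|\nabla\phi(\gamma_n(\cdot))-\nabla\phi(\gamma(\cdot))\|_{L^{2}(0,T;\mathbb{R}^2)}\longrightarrow 0.
\]
The only slightly delicate point — and what I would regard as the main obstacle — is handling the non-smoothness of $s\mapsto[s]_+$ at $s=0$, which is why one squares it: this restores $C^{1}$-regularity of the scalar function and makes the Nemytskii superposition $\gamma\mapsto\phi\circ\gamma$ differentiable even though $g\circ\gamma$ itself is not differentiated through a non-smooth kink. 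The rest is a standard Nemytskii argument made available by the continuous (in fact compact) embedding $H^{1}\hookrightarrow C$.
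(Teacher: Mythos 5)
Your proof is correct, but it takes a genuinely different route from the paper's. The paper factors $\mathcal{J}$ through the intermediate space $H^{1}(0,T)$: it writes $\mathcal{J}=f\circ G$ with $G\colon\gamma\mapsto g\circ\gamma\in H^{1}(0,T)$ (whose $C^1$-differentiability is the content of Proposition \ref{GfD}, proved later and requiring the higher smoothness of $g$) and $f\colon\varphi\mapsto\norm{\varphi_+}_{L^2(0,T)}^2$, whose differentiability on $H^1(0,T)$ is obtained by citing \cite[Theorem~7]{GKT} together with $H^1(0,T)\hookrightarrow C([0,T])$; the chain rule then gives the formula. You instead compose at the pointwise level, observing that $\phi=[g]_+^2$ is already $C^1$ on $\mathbb{R}^2$ with $\nabla\phi=2[g]_+\nabla g$, and then run a direct Taylor/Nemytskii argument for the integral functional $\gamma\mapsto\int_0^T\phi(\gamma(t))\,\d t$ using the embedding $H^1(0,T;\mathbb{R}^2)\hookrightarrow C([0,T];\mathbb{R}^2)$ and uniform continuity of $\nabla\phi$ on compacta. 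Your argument is more elementary and self-contained — it needs only $g\in C^1$ and no external superposition-operator theorem — while the paper's route costs nothing extra because the differentiability of $G$ into $H^1(0,T)$ is needed anyway for the second-order analysis in Section \ref{sec:soc}. Two cosmetic remarks: in the final continuity step the compactness of the embedding is not needed (you have strong $H^1$-convergence, so the continuous embedding suffices), and the factor $\sqrt{T}$ in your last displayed estimate is superfluous — Cauchy--Schwarz together with $\norm{\delta\gamma}_{L^2}\le\norm{\delta\gamma}_{H^1}$ already gives the operator-norm bound by $\norm{\nabla\phi(\gamma_n(\cdot))-\nabla\phi(\gamma(\cdot))}_{L^2(0,T;\mathbb{R}^2)}$. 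Neither affects the validity of the proof.
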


\begin{proof}
	We will prove in Proposition \ref{GfD} below that the mapping
\[
G:H^{1}(0,T;\mathbb{R} ^{2})\rightarrow H^{1}(0,T):\gamma\mapsto g\circ\gamma
\]
is continuously differentiable and that
\[
DG(\gamma).\delta\gamma=(\left(  \nabla g\right)  \circ\gamma)\cdot
\delta\gamma,\text{ for every }\delta\gamma\in H^{1}(0,T;\mathbb{R} ^{2})\text{. }%
\]
Thus by the well-known chain rule,
it suffices to prove that the mapping%
\[
f:H^{1}(0,T)\rightarrow\mathbb{R} :\varphi\longmapsto\int_{0}^{T}[\varphi(t)]_{+}^{2} \d t=\left\Vert \varphi
_{+}\right\Vert _{L^{2}(0,T)}^{2}%
\]
is continuously differentiable and that%
\[
Df(\varphi).\delta\varphi=2\int_{0}^{T}[\varphi(t)]_{+}\delta
\varphi(t) \d t,\text{ for every }\delta\varphi\in H^{1}(0,T)\text{.}%
\]
This follows from standard arguments,
see, e.g., \cite[Theorem~7]{GKT}
combined with the continuous embedding $H^1(0,T) \hookrightarrow C([0,T])$.
\end{proof}

\begin{theorem} \label{regoptraj}
Let $\left(  \overline{\gamma},\bar{\tau}\right)  $ be a local minimizer of
\eqref{ROCP}.
Then $\overline{\gamma}\in H^{2}(0,T;\mathbb{R} ^{2})$ and $\overline{\gamma}^{\prime}(0)=\overline{\gamma}^{\prime}(T)=0_{\mathbb{R} ^{2}}$.
\end{theorem}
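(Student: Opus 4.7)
The plan is to use a Moreau--Yosida type penalization of the pointwise inequality constraint $g\circ\gamma\leq 0$, supplemented by a small $L^2$-localization near $\bar\gamma$, so that each approximate problem becomes unconstrained in $\gamma$ and admits a regular Euler--Lagrange equation with natural boundary conditions. For $\varepsilon>0$, set
\[
F_\varepsilon(\gamma,\tau) := \hat J_r(\gamma,\tau) + \frac{1}{2\varepsilon}\mathcal{J}(\gamma) + \frac{1}{2}\|\gamma-\bar\gamma\|_{L^2(0,T;\mathbb{R}^2)}^2,
\]
and minimize $F_\varepsilon$ over $\bar B_\rho(\bar\gamma)\times[3r,T]$, where $\bar B_\rho(\bar\gamma)\subset H^1(0,T;\mathbb{R}^2)$ is a closed $H^1$-ball of radius $\rho>0$ chosen small enough that $\bar\gamma$ realizes $\min\hat J_r$ on $(\Uad\cap\bar B_\rho(\bar\gamma))\times[3r,T]$. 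Existence of a minimizer $(\gamma_\varepsilon,\tau_\varepsilon)$ is standard from the weak $H^1$-compactness of $\bar B_\rho(\bar\gamma)$ together with Proposition~\ref{GWSC} and Lemma~\ref{cdrte}.

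From $F_\varepsilon(\gamma_\varepsilon,\tau_\varepsilon)\leq F_\varepsilon(\bar\gamma,\bar\tau)=\hat J_r(\bar\gamma,\bar\tau)$, the bound $\mathcal{J}(\gamma_\varepsilon)=O(\varepsilon)$ combined with $H^1\hookrightarrow C([0,T];\mathbb{R}^2)$ forces any weak $H^1$-limit $\gamma^{\ast}$ of $(\gamma_\varepsilon)$ to satisfy $g\circ\gamma^{\ast}\leq 0$, while the $L^2$-localization together with the local optimality of $\bar\gamma$ identifies $\gamma^{\ast}=\bar\gamma$. Because each non-negative summand of $\hat J_r$ is weakly lower semicontinuous, the equality $\lim\hat J_r(\gamma_\varepsilon,\tau_\varepsilon)=\hat J_r(\bar\gamma,\bar\tau)$ descends to each summand, in particular $|\gamma_\varepsilon|_{H^1}\to|\bar\gamma|_{H^1}$, which together with weak convergence upgrades to strong $H^1$-convergence $\gamma_\varepsilon\to\bar\gamma$. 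For $\varepsilon$ sufficiently small, $(\gamma_\varepsilon,\tau_\varepsilon)$ lies in the interior of $\bar B_\rho(\bar\gamma)$ in the $\gamma$-coordinate, so $D_\gamma F_\varepsilon(\gamma_\varepsilon,\tau_\varepsilon)\cdot\delta\gamma=0$ for every $\delta\gamma\in H^1(0,T;\mathbb{R}^2)$.

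Using Proposition~\ref{cfdrdc} and Lemma~\ref{cdrte}, this Euler--Lagrange equation rearranges as
\[
\lambda_\gamma\int_0^T\gamma_\varepsilon'\cdot\delta\gamma'\,\d t = -\int_0^T\Psi_\varepsilon\cdot\delta\gamma\,\d t \qquad\forall\,\delta\gamma\in H^1(0,T;\mathbb{R}^2),
\]
where $\Psi_\varepsilon:=A_\varepsilon+\mu_\varepsilon\nabla g(\gamma_\varepsilon)+(\gamma_\varepsilon-\bar\gamma)\in L^2(0,T;\mathbb{R}^2)$ collects the adjoint-type part $A_\varepsilon$ of $D_\gamma\hat J_r$, the penalty contribution $\mu_\varepsilon\nabla g(\gamma_\varepsilon)$ with $\mu_\varepsilon:=\tfrac{1}{\varepsilon}[g(\gamma_\varepsilon)]_+\geq 0$, and the perturbation $\gamma_\varepsilon-\bar\gamma$. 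Testing first with $\delta\gamma\in C_c^\infty((0,T);\mathbb{R}^2)$ identifies $\lambda_\gamma\gamma_\varepsilon''=\Psi_\varepsilon$ in the distributional sense, hence in $L^2$, so $\gamma_\varepsilon\in H^2(0,T;\mathbb{R}^2)$; integrating by parts against arbitrary $\delta\gamma\in H^1$ then shows that the boundary term $\lambda_\gamma[\gamma_\varepsilon'\cdot\delta\gamma]_0^T$ must vanish for every such $\delta\gamma$, yielding the natural boundary conditions $\gamma_\varepsilon'(0)=\gamma_\varepsilon'(T)=0_{\mathbb{R}^2}$.

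The main obstacle is deriving a uniform $H^2$-bound on $(\gamma_\varepsilon)$. I would test the ODE $\lambda_\gamma\gamma_\varepsilon''=\Psi_\varepsilon$ against $\gamma_\varepsilon''$: the $A_\varepsilon$ and $(\gamma_\varepsilon-\bar\gamma)$ contributions are absorbed by Young's inequality, while the delicate penalty term $\int_0^T\mu_\varepsilon\nabla g(\gamma_\varepsilon)\cdot\gamma_\varepsilon''\,\d t$ is treated via the chain-rule identity
\[
\nabla g(\gamma_\varepsilon)\cdot\gamma_\varepsilon''=(g\circ\gamma_\varepsilon)''-(\gamma_\varepsilon')^{\top}Hg(\gamma_\varepsilon)\,\gamma_\varepsilon'.
\]
Integrating $\int_0^T\mu_\varepsilon(g\circ\gamma_\varepsilon)''\,\d t$ by parts twice produces no boundary contribution (since $\gamma_\varepsilon'(0)=\gamma_\varepsilon'(T)=0$ implies $(g\circ\gamma_\varepsilon)'(0)=(g\circ\gamma_\varepsilon)'(T)=0$) and yields the non-positive quantity $-\tfrac{1}{\varepsilon}\int_0^T\mathds{1}_{\{g(\gamma_\varepsilon)>0\}}\bigl((g\circ\gamma_\varepsilon)'\bigr)^2\,\d t$; using the non-degeneracy $|\nabla g|\geq c>0$ near $\{g=0\}$ together with the smallness $\|[g(\gamma_\varepsilon)]_+\|_{L^\infty}\to 0$, this absorbs the normal-to-$\nabla g$ component of the Hessian term, while the tangential component is controlled on the shrinking active set. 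Once the uniform $H^2$-bound is established, weak $H^2$-compactness combined with the previous identification of the limit gives $\gamma_\varepsilon\rightharpoonup\bar\gamma$ in $H^2$, so $\bar\gamma\in H^2(0,T;\mathbb{R}^2)$, and the continuous embedding $H^2\hookrightarrow C^1$ propagates the endpoint conditions to $\bar\gamma'(0)=\bar\gamma'(T)=0_{\mathbb{R}^2}$.
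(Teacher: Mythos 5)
Your strategy is the same as the paper's: penalize the constraint $g\circ\gamma\le 0$ by $\kappa\int_0^T[g\circ\gamma]_+^2\,\d t$, localize with an $L^2$-proximal term, pass to the Euler--Lagrange ODE with natural boundary conditions, and test with $\gamma_\varepsilon''$. Most of the skeleton is sound (one small repair: a merely local minimizer of \eqref{ROCP} is local in $(\gamma,\tau)$ jointly, so you must also localize in $\tau$ and add $|\tau-\bar\tau|^2$ to the penalized functional, as in \eqref{add2}--\eqref{add3}; otherwise the inequality $\hat J_r(\bar\gamma,\bar\tau)\le\hat J_r(\gamma^\ast,\tau^\ast)$ used to identify the limit is not available). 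Also, the identity $\int_0^T\mu_\varepsilon(g\circ\gamma_\varepsilon)''\,\d t=-\frac{1}{\varepsilon}\int_0^T\mathds{1}_{\{g\circ\gamma_\varepsilon>0\}}\bigl((g\circ\gamma_\varepsilon)'\bigr)^2\d t$ follows from a single integration by parts, not two, but the resulting expression you state is correct and matches \eqref{add31}.

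The genuine gap is in the step you yourself flag as the main obstacle: the absorption of the Hessian contribution $\int_0^T\mu_\varepsilon\,(\gamma_\varepsilon')^{\top}\nabla^2 g(\gamma_\varepsilon)\,\gamma_\varepsilon'\,\d t$ with $\mu_\varepsilon=\frac{1}{\varepsilon}[g\circ\gamma_\varepsilon]_+$. The sign-definite term produced by the integration by parts only controls $\frac{1}{\varepsilon}\mathds{1}_{\{g>0\}}(\nabla g(\gamma_\varepsilon)\cdot\gamma_\varepsilon')^2$, i.e.\ the component of $\gamma_\varepsilon'$ along $\nabla g(\gamma_\varepsilon)$; the tangential part of the Hessian term still carries the full factor $\mu_\varepsilon$, which is a priori unbounded as $\varepsilon\to 0$. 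Your claim that it is ``controlled on the shrinking active set'' is not an argument: the Lebesgue measure of $\{g\circ\gamma_\varepsilon>0\}$ tending to zero does not compensate the $\frac{1}{\varepsilon}$ blow-up, and $\|[g\circ\gamma_\varepsilon]_+\|_{L^\infty}\to 0$ only tells you $\mu_\varepsilon=o(1)/\varepsilon$. What is missing is a quantitative bound on the multiplier itself. The paper obtains it by projecting the Euler--Lagrange ODE \eqref{add22} onto $\nabla g(\tilde\gamma_\kappa)/|\nabla g(\tilde\gamma_\kappa)|^2$ on the active set --- where $|\nabla g(\tilde\gamma_\kappa)|$ is uniformly bounded away from zero for large $\kappa$ because $\tilde\gamma_\kappa\to\bar\gamma$ in $C([0,T];\R^2)$ and $\nabla g\ne 0$ on $\partial\Gamma_S$ --- which yields $\|2\kappa[g\circ\tilde\gamma_\kappa]_+\|_{L^2(0,T)}\le C(1+\|\tilde\gamma_\kappa''\|_{L^2(0,T;\R^2)})$, see \eqref{add33}--\eqref{add35}. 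Combined with the interpolation $\|\tilde\gamma_\kappa'\|_{L^\infty}\lesssim\|\tilde\gamma_\kappa''\|_{L^2}^{1/2}$ (valid thanks to $\tilde\gamma_\kappa'(0)=\tilde\gamma_\kappa'(T)=0$), this turns the Hessian term into a power $\|\tilde\gamma_\kappa''\|_{L^2}^{3/2}$, which closes the estimate \eqref{add42}. Without this multiplier bound your energy estimate does not close, so the uniform $H^2$ bound --- and hence the conclusion --- is not established as written.
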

\begin{proof}
Let us recall (section \ref{sec:prelim}), that $\Gamma_{S}$ is the compact subdomain of $\Gamma_{1}\subset\mathbb{R} ^{2}$ ($\Gamma_{1}$ is an open subset of $\partial\Omega$) defined by a
function $g:\mathbb{R} ^{2}\rightarrow\mathbb{R} :x\mapsto g(x)$ of class $C^{3}$ such that $\Gamma_{S}=\{x\in\Gamma
_{1};g(x)\leq0\}$ and $\nabla g(x)\neq0_{\mathbb{R}^{2}}$ for every $x\in\partial\Gamma_{S}$. 

For $\kappa>0$, we define the \textit{regularized} cost functional:%
\begin{equation}
\tilde{J}_{\kappa}\left(  \gamma,\tau\right)  :=\hat{J}_{r}\left(  \gamma
,\tau\right)  +\kappa\int_{0}^{T}\left[  g(\gamma(t)\right]  _{+}%
^{2}\d t+\left\Vert \gamma-\overline{\gamma}\right\Vert _{L^{2}(0,T;\mathbb{R} ^{2})}^{2}+|\tau-\bar{\tau}|^{2}, \label{add2}%
\end{equation}
where the reduced cost functional $\hat J_r$ was defined in
Definition \ref{defrcf}.

For $\rho>0$, let us consider%
\begin{equation}
C:=\bar{B}(\overline{\gamma};\rho)\times([3r,T]\cap\bar{B}(\overline{\tau
};\rho)) \label{add3}%
\end{equation}
where $\bar{B}(\overline{\gamma};\rho)$ (resp. $\bar{B}(\overline{\tau};\rho
)$) denotes the closed ball of center $\overline{\gamma}$ (resp.\ $\overline{\tau}$)
and radius $\rho$ in $H^{1}(0,T;\mathbb{R} ^{2})$ (resp.\ $\mathbb{R} $).
Clearly, $C$ is a closed convex subset of the Hilbert space $H^{1}(0,T;\mathbb{R} ^{2})\times\mathbb{R} $. We fix $\rho>0$ sufficiently small such that $(\overline{\gamma
},\bar{\tau})$ is a global minimizer of $\hat{J}_{r}$ on $(\Uad\cap\bar
{B}(\overline{\gamma};\rho))\times([3r,T]\cap\bar{B}(\overline{\tau};\rho))$.

\underline{First step}: In this first step, for $\kappa >0$ fixed, we consider the minimization problem:
\begin{equation}
\min\tilde{J}_{\kappa}\left(  \gamma,\tau\right)  \text{ for }\left(
\gamma,\tau\right)  \in C. \label{add4}%
\end{equation}
We are going to prove that $\inf\limits_{(\gamma,\tau)\in C}\tilde{J}_{\kappa}\left(  \gamma,\tau\right) $ is attained at some point $\left(  \tilde{\gamma}_{\kappa},\tilde{\tau}_{\kappa
}\right)$ in $C$. 

The closed convex set $C$ is also bounded in the
Hilbert space $H^{1}(0,T;\mathbb{R} ^{2})\times\mathbb{R} $ so that it is a weakly sequentially compact subset of $H^{1}(0,T;\mathbb{R} ^{2})\times\mathbb{R} $ \cite[pp.46-47]{FrTr}.
From the proof of Theorem \ref{EGMin},
we get that $\hat J_r$ is weakly sequentially lower semicontinuous.
Owing to the compact embedding from
$H^1(0,T; \R^2)$ in $C([0,T]; \R^2)$,
one can check that the mapping $\mathcal{J}$
from \ref{cdrte}
is weakly sequentially lower semicontinuous.
Now, standard arguments apply
and yield the existence of a solution $(\tilde \gamma_\kappa, \tilde \tau_\kappa)$ of \eqref{add4}.

\underline{Second step:} In this second step, we study the behaviour of $(\tilde{\gamma}_{\kappa},\tilde{\tau}_{\kappa})$ as $\kappa$ tends to $+\infty$. In particular, we are going to prove that the sequence $\left(  \tilde{\gamma}_{\kappa}%
,\tilde{\tau}_{\kappa}\right)  _{\kappa=1}^{+\infty}$ converges strongly in $H^{1}(0,T;
\mathbb{R}^{2})\times\mathbb{R}$ to $\left(  \bar{\gamma},\bar{\tau}\right).$  

The sequence $\left(  \tilde{\gamma}_{\kappa}%
,\tilde{\tau}_{\kappa}\right)  _{\kappa=1}^{+\infty}$ is trivially bounded in
$H^{1}(0,T;\mathbb{R} ^{2})\times\mathbb{R} $ as $(\tilde{\gamma}_{\kappa},\tilde{\tau}_{\kappa})\in\bar{B}(\overline
{\gamma};\rho)\times\lbrack3r,T]$, so that it possesses a subsequence $\left(
\tilde{\gamma}_{\kappa_{l}},\tilde{\tau}_{\kappa_{l}}\right)_{l=0}^{+\infty}$, weakly convergent to some $\left(  \tilde{\gamma
},\tilde{\tau}\right)  \in H^{1}(0,T;\mathbb{R} ^{2})\times\lbrack3r,T]$. We also have that $\left(  \tilde{\gamma}%
,\tilde{\tau}\right)  \in C$, since $C$ is weakly closed.

The tuple $\left(  \bar{\gamma},\bar{\tau}\right)  $ belongs also to $C$, thus we have
\begin{equation}
\tilde{J}_{\kappa}\left(  \tilde{\gamma}_{\kappa},\tilde{\tau}%
_{\kappa}\right)  \leq\tilde{J}_{\kappa}\left(  \bar{\gamma},\bar
{\tau}\right)  =\hat{J}_{r}\left(  \bar{\gamma},\bar{\tau}\right)  \label{add7}%
\end{equation}
due to the definition \eqref{add2} of the regularized cost
functional $\tilde{J}_{\kappa}$ and due to the fact that the range of
$\bar{\gamma}$ being contained in $\Gamma_{S}$, we have $g(\bar{\gamma}(t))\leq0$
for all $t\in\lbrack0,T]$ so that $\left[  g(\bar{\gamma}(t)\right]  _{+}=0$
for all $t\in\lbrack0,T]$. Inequality \eqref{add7} implies that%
\begin{equation}
\kappa\int_{0}^{T}\left[  g(\tilde{\gamma}_{\kappa}(t))\right]
_{+}^{2}\d t\leq\hat{J}_{r}\left(  \bar{\gamma},\bar{\tau}\right)
<+\infty\label{add8}%
\end{equation}
so that $\int_{0}^{T}\left[  g(\tilde{\gamma}_{\kappa}(t))\right]  _{+}%
^{2}\d t\rightarrow0$ as $\kappa\rightarrow+\infty$.
Due to the strong convergence of
$\tilde{\gamma}_{\kappa_{l}}$ to $\tilde{\gamma}$ in $C([0,T];\mathbb{R} ^{2})$, it follows
that $\int_{0}^{T}\left[
g(\tilde{\gamma}(t)\right]  _{+}^{2}\d t=0$. Thus $g(\tilde{\gamma}(t))\leq0$
for all $t\in\lbrack0,T]$ as
$g\circ\tilde{\gamma}$ is continuous. Consequently, $\tilde{\gamma}%
(t)\in\Gamma_{S}$ for all $t\in\lbrack0,T]$. Thus $\tilde{\gamma}\in \Uad$.

By \eqref{add7}, we have a fortiori%
\begin{equation}
\hat{J}_{r}\left(  \tilde{\gamma}_{\kappa},\tilde{\tau}_{\kappa%
}\right)  +\left\Vert \tilde{\gamma}_{\kappa}-\overline{\gamma}\right\Vert
_{L^{2}(0,T;\mathbb{R} ^{2})}^{2}+|\tilde{\tau}_{\kappa}-\bar{\tau}|^{2}\leq\tilde{J}_{\kappa}\left(  \bar{\gamma},\bar{\tau}\right)  =\hat{J}_{r}\left(  \bar{\gamma
},\bar{\tau}\right)  .\label{add9}%
\end{equation}
Taking $\kappa=\kappa_{l}$ in \eqref{add9} and passing to the limit as $l \to +\infty$, we obtain by using
the weak sequential lower semicontinuity of $\hat J_r$
that
\begin{equation}
\hat{J}_{r}\left(  \tilde{\gamma},\tilde{\tau}\right)  +\left\Vert \tilde{\gamma}-\overline{\gamma}\right\Vert _{L^{2}(0,T;\mathbb{R} ^{2})}^{2}+|\tilde{\tau}-\bar{\tau}|^{2}
\leq\hat{J}_{r}\left(  \bar{\gamma
},\bar{\tau}\right)  .\label{add10}%
\end{equation}
As $\left(  \overline{\gamma},\bar{\tau}\right)  $ is a solution of the
problem $\hat{J}_{r}\left(  \overline{\gamma},\bar{\tau}\right)
=\min\limits_{\left(  \gamma,\tau\right)  \in(\Uad\times\mathbb{R} )\cap C}\hat{J}_{r}\left(  \gamma,\tau\right)  $ and as we have seen that
$\tilde{\gamma}\in \Uad$, we also have that:%
\begin{equation}
\hat{J}_{r}\left(  \bar{\gamma},\bar{\tau}\right)  \leq\hat{J}_{r}\left(
\tilde{\gamma},\tilde{\tau}\right)  .\label{add11}%
\end{equation}
From \eqref{add10} and \eqref{add11} follows that
$\tilde{\gamma
}=\overline{\gamma}$ and $\tilde{\tau}=\bar{\tau}$. Thus the subsequence
$\left(  \tilde{\gamma}_{\kappa_{l}},\tilde{\tau}_{\kappa_{l}}\right)  $
converges weakly in $H^{1}(0,T;\mathbb{R} ^{2})\times\lbrack3r,T]$ to $\left(  \bar{\gamma},\bar{\tau}\right).$ 
Further, \eqref{add9} and \eqref{add11} imply
\begin{equation*}
	\hat{J}_{r}\left(  \bar{\gamma},\bar{\tau}\right)
	=
	\lim_{l \to \infty}
	\hat{J}_{r}\left( \gamma_{\kappa_l}, \tau_{\kappa_l}\right)
	.
\end{equation*}
Since the reduced objective contains the $H^1$-seminorm of $\gamma$
and since all other terms are weakly continuous (see Proposition \ref{GWSC}),
this implies
\begin{equation*}
    \abs{ \bar{\gamma} }_{H^1(0,T;\R^2)}
    =
    \lim_{l \to \infty}
    \abs{ \gamma_{\kappa_l} }_{H^1(0,T;\R^2)}
    .
\end{equation*}
Further, $H^1(0,T;\R^2)$ is compactly embedded into $L^2(0,T;\R^2)$
and
this implies the convergence of the $L^2$-norms.
This yields
\begin{equation*}
    \norm{ \bar{\gamma} }_{H^1(0,T;\R^2)}
    =
    \lim_{l \to \infty}
    \norm{ \gamma_{\kappa_l} }_{H^1(0,T;\R^2)}
    .
\end{equation*}

Consequently,
the convergence $\gamma_{\kappa_l} \to \bar\gamma$
is strong in $H^1(0,T;\R^2)$.

The usual subsequence-subsequence argument yields the convergence of the entire sequence.

\underline{Third step}:
We evaluate the optimality conditions at the minimizer
$(\tilde \gamma_\kappa, \tilde \tau_\kappa)$.
Note that the strong convergence in $H^1(0,T;\R^2)$ implies that $\tilde{\gamma}_{\kappa}$ will
belong to the open ball $B(\overline{\gamma};\rho)$ in $H^{1}(0,T;\mathbb{R} ^{2})$ for large $\kappa$.
Consequently,
$\tilde{\gamma}_{\kappa}$ verifies the optimality
condition%
\begin{equation}
D_{\gamma}\tilde{J}_{\kappa}\left(  \tilde{\gamma}_{\kappa},\tilde{\tau
}_{\kappa}\right)  =0_{H^{1}(0,T;\mathbb{R} ^{2})^{\ast}}.\label{add13}%
\end{equation}
Thus, by \eqref{add2}, \eqref{varineq}, and Lemma \ref{cdrte}
we get
\begin{equation}%
	\begin{aligned}
		\MoveEqLeft
D_{\gamma}\tilde{J}_{\kappa}\left(  \tilde{\gamma}_{\kappa},\tilde{\tau
}_{\kappa}\right)  \delta\tilde{\gamma}
\\
&=D_{\gamma}\hat{J}_{r}(\tilde{\gamma
}_{\kappa},\tilde{\tau}_{\kappa})\delta\tilde{\gamma}
\\&\qquad
+2(\tilde{\gamma}%
_{\kappa}-\overline{\gamma},\delta\tilde{\gamma})_{L^{2}(0,T;\mathbb{R} ^{2})}+2\kappa\int_{0}^{T}\left[  g(\tilde{\gamma}_{\kappa}(t))\right]
_{+}\nabla g(\tilde{\gamma}_{\kappa}(t))\cdot\delta\tilde{\gamma}(t) \d t\\
&=c_{R}\int_{0}^{T}\int_{\Gamma_{1}}e^{w(\tilde{\gamma}_{\kappa
})(x,t)}(x-\tilde{\gamma}_{\kappa}(t))\cdot\delta\tilde{\gamma}(t)p_{\kappa
}(x,t) \d S(x)\d t+\lambda_{\gamma}\int_{0}^{T}\tilde{\gamma}_{\kappa}^{\prime
}\cdot\delta\tilde{\gamma}^{\prime}\d t\\
&\qquad
+2(\tilde{\gamma}_{\kappa}-\overline{\gamma}, \delta\tilde{\gamma})_{L^{2}(0,T;\mathbb{R} ^{2})}+2\kappa\int_{0}^{T}\left[  g(\tilde{\gamma}_{\kappa}(t))\right]
_{+}\nabla g(\tilde{\gamma}_{\kappa}(t))\cdot\delta\tilde{\gamma}(t) \d t
\\&=0,
	\end{aligned}
\label{add17}%
\end{equation}
for every $\delta\tilde{\gamma}\in H^{1}(0,T;\mathbb{R} ^{2})$, the adjoint state $p_{\kappa}:(x,t)\mapsto p_{\kappa}(x,t)$ being defined by \eqref{varformofAdjpb}, but with $S(\bar{\gamma})$ replaced by
$S(\tilde{\gamma}_{\kappa})$ and $\bar{\tau}$ by $\tilde{\tau}_{\kappa}$ in the right-hand side of equation \eqref{varformofAdjpb}.

With the abbreviation
\begin{equation}
P_{\kappa}(t):=\int_{\Gamma_{1}}c_{R}\exp(w(\tilde{\gamma}_{\kappa
})(x,t))(x-\tilde{\gamma}_{\kappa}(t))p_{\kappa}(x,t) \d S(x), \label{add20}%
\end{equation}
the equation \eqref{add17} becomes
\begin{equation}%
	\begin{aligned}
		0
		&=
		\int_{0}^{T}P_{\kappa}(t)\cdot\delta\tilde{\gamma}(t)\d t+\lambda_{\gamma}%
		\int_{0}^{T}\tilde{\gamma}_{\kappa}^{\prime}(t)\cdot\delta\tilde{\gamma
		}^{\prime}(t)\d t+2\int_{0}^{T}(\tilde{\gamma}_{\kappa}(t)-\bar{\gamma}%
		(t))\cdot\delta\tilde{\gamma}(t)\d t\\
		&\qquad
		+2\kappa\int_{0}^{T}\left[  g(\tilde{\gamma}_{\kappa}(t))\right]  _{+}\nabla
		g(\tilde{\gamma}_{\kappa}(t))\cdot\delta\tilde{\gamma}(t) \d t,
	\end{aligned}
	\label{add21}%
\end{equation}
for every $\delta\tilde{\gamma}\in H^{1}(0,T;\mathbb{R} ^{2})$.
Let us prove that $P_{\kappa}\in L^{2}(0,T;\mathbb{R} ^{2})$.
We have $p_{\kappa}\in W(0,T)\subset L^{2}(0,T;H^{1}(\Omega))$ so that
$p_{\kappa}|_{\Gamma_{1}}\in L^{2}(0,T;L^{2}(\Gamma_{1}))$.
Further, $\tilde{\gamma
}_{\kappa}\in H^{1}(0,T;\mathbb{R} ^{2})\hookrightarrow C([0,T];\mathbb{R} ^{2})$.
Finally $\exp(w(\tilde{\gamma}_{\kappa})(x,t))=\exp(-\frac{2}{R^{2}}%
|x-\tilde{\gamma}_{\kappa}(t)|^{2})\leq1$. Putting these together in equation 
\eqref{add20}, we obtain that $P_{\kappa}\in L^{2}(0,T;\mathbb{R} ^{2})$. Moreover the norm of $P_{\kappa}$ in $L^{2}(0,T;\mathbb{R} ^{2})$ is uniformly bounded as the norm of $p_{\kappa}$ is uniformly bounded
in $W(0,T)$ due to $\left\Vert \tilde{\gamma}_{\kappa}\right\Vert _{H^{1}(0,T;\mathbb{R} ^{2})}\leq C(\bar{\gamma},\rho)$ (a positive constant depending only on $\bar{\gamma}$ and $\rho$), Proposition \ref{GWSC}, \eqref{varformofAdjpb}, and  \cite[Ch.XVIII]{9} or \cite[Ch.11]{Chi}.
We also know that $\tilde{\gamma}_{\kappa}$ and $\bar{\gamma}$
belong to $H^{1}(0,T;\mathbb{R} ^{2})$ so that $\tilde{\gamma}_{\kappa}-\bar{\gamma}\in H^{1}(0,T;\mathbb{R} ^{2})$. As $\tilde{\gamma}_{\kappa} \in \bar{B}(\bar\gamma,\rho)$, $\left[  g(\tilde{\gamma}_{\kappa%
}(.)\right]  _{+}\in L^{\infty}(\Gamma_{S,\varepsilon})$ and  $\nabla g(\tilde{\gamma}_{\kappa}(\cdot))\in L^{\infty}%
(\Gamma_{S,\varepsilon})^{2}$ with uniform bounds on their $L^{\infty}$-norms.
Thus, \eqref{add21} is the weak formulation of a second-order ODE in $\R^2$
and standard arguments yield
\begin{equation}
\lambda_{\gamma}\tilde{\gamma}_{\kappa}^{{\prime\prime
}}(t)
=
P_{\kappa}(t)
+2(\tilde{\gamma}_{\kappa}(t)-\bar{\gamma}(t))+2\kappa\left[
g(\tilde{\gamma}_{\kappa}(t))\right]  _{+}\nabla g(\tilde{\gamma}%
_{\kappa}(t))
\label{add22}%
\end{equation}
and the boundary conditions
\begin{equation}
\tilde{\gamma}_{\kappa}^{\prime}(0)=0_{\mathbb{R} ^{2}}\text{ and }\tilde{\gamma}_{\kappa}^{\prime}(T)=0_{\mathbb{R} ^{2}}. \label{add24}%
\end{equation}

\underline{Fourth step}:
We derive a uniform bound on the $H^2$-norm of $\gamma_\kappa$
and pass to the limit.
We test equation \eqref{add22} with
$\tilde{\gamma}_{\kappa}^{{\prime\prime}}(t)$ and obtain
\begin{equation}%
	\begin{aligned}
\lambda_{\gamma}\int_{0}^{T}|\tilde{\gamma}_{\kappa}^{{\prime\prime}%
}(t)|^{2}\d t&=\int_{0}^{T}P_{\kappa}(t)\cdot\tilde{\gamma}_{\kappa%
}^{\prime\prime}(t)\d t+2\int_{0}^{T}(\tilde{\gamma}_{\kappa}(t)-\bar{\gamma}(t))\cdot\tilde{\gamma}_{\kappa}^{{\prime\prime}}(t)\d t\\
&\qquad +2\kappa\int_{0}^{T}\left[  g(\tilde{\gamma}_{\kappa}(t)\right]
_{+}\nabla g(\tilde{\gamma}_{\kappa}(t))\cdot\tilde{\gamma}_{\kappa%
}^{{\prime\prime}}(t)\d t.
	\end{aligned}
\label{add30}%
\end{equation}
Let us work on the third term on the right-hand side of equation \eqref{add30}.
By taking into account \eqref{add24} and by using integration by parts \cite[Theo. 2.8 p.28]{Chi},
we obtain
\begin{equation}%
	\begin{aligned}
		\MoveEqLeft
2\kappa\int_{0}^{T}\left[  g(\tilde{\gamma}_{\kappa}(t)\right]
_{+}\nabla g(\tilde{\gamma}_{\kappa}(t))\cdot\tilde{\gamma}_{\kappa%
}^{{\prime\prime}}(t)\d t
\\
&=-2\kappa\int_{0}^{T}\mathbf{1}_{\{g(\tilde{\gamma}_{\kappa}%
(t))>0\}}(t)\left[  \nabla g(\tilde{\gamma}_{\kappa}(t))\cdot\tilde
{\gamma}_{\kappa}^{{\prime}}(t)\right]  ^{2}\d t\\
&\qquad
-2\kappa\int_{0}^{T}\left[  g(\tilde{\gamma}_{\kappa}(t)\right]
_{+}
{\tilde\gamma}_{\kappa}^{{\prime}}(t)^\top
\nabla^{2}g\left(  \tilde{\gamma }_{\kappa}(t)\right)
{\tilde\gamma}_{\kappa}^{{\prime}}(t)
\d t
\\
&\leq
-2\kappa\int_{0}^{T}\left[  g(\tilde{\gamma}_{\kappa}(t)\right]
_{+}
{\tilde\gamma}_{\kappa}^{{\prime}}(t)^\top
\nabla^{2}g\left(  \tilde{\gamma }_{\kappa}(t)\right)
{\tilde\gamma}_{\kappa}^{{\prime}}(t)
\d t
.
	\end{aligned}
\label{add31}%
\end{equation}
To bound $\kappa\left[  g(\tilde{\gamma}_{\kappa}(t)\right]  _{+}$,
let us multiply both sides of the ODE in $\mathbb{R}^2$  
\eqref{add22} by $\frac{\nabla g(\tilde{\gamma}_{\kappa}(t))}{|\nabla
g(\tilde{\gamma}_{\kappa}(t))|^{2}}$, so that we obtain:%
\begin{equation}%
\lbrack P_{\kappa}(t)+2(\tilde{\gamma}_{\kappa}(t)-\bar{\gamma
}(t))-\lambda_{\gamma}\tilde{\gamma}%
_{\kappa}^{{\prime\prime}}(t)]\cdot\frac{\nabla g(\tilde{\gamma}%
_{\kappa}(t))}{|\nabla g(\tilde{\gamma}_{\kappa}(t))|^{2}}
+2\kappa\left[  g(\tilde{\gamma}_{\kappa}(t))\right]  _{+}=0.
\label{add33}%
\end{equation}
On the set $\{t\in [0,T];\left[  g(\tilde{\gamma}_{\kappa%
}(t))\right]  _{+}\neq0\}=\{t\in [0,T];g(\tilde{\gamma}_{\kappa%
}(t))>0\}\subset\{t\in [0,T];\tilde{\gamma}_{\kappa}(t)\in
\complement\Gamma_{S}\}$, we have that $|\nabla g(\tilde{\gamma}_{\kappa%
}(t))|$ is uniformly positive for large $\kappa$ as $\tilde{\gamma}_{\kappa%
}\rightarrow\bar{\gamma}$ in $C([0,T];\mathbb{R} ^{2})$, $R(\bar{\gamma})\subset\Gamma_{S}$ and $\inf\limits_{x\in
\partial\Gamma_{S}}|\nabla g(x)|>0$. So by \eqref{add33}, we
have%
\begin{equation}
\left\Vert 2\kappa\left[  g(\tilde{\gamma}_{\kappa}(.))\right]
_{+}\right\Vert _{L^{2}(0,T)}\leq C(1+\left\Vert \tilde{\gamma}_{\kappa%
}^{{\prime\prime}}\right\Vert _{L^{2}(0,T;\mathbb{R} ^{2})}), \label{add35}%
\end{equation}
for some fixed positive constant $C$.

Using once again the convergence
$\tilde{\gamma}_{\kappa}\rightarrow\bar{\gamma}$ in
$C([0,T];\mathbb{R} ^{2})$ when $\kappa \to + \infty$
we get the bound
\begin{equation}
	\abs{
{\tilde\gamma}_{\kappa}^{{\prime}}(t)^\top
\nabla^{2}g\left(  \tilde{\gamma }_{\kappa}(t)\right)
{\tilde\gamma}_{\kappa}^{{\prime}}(t)
}
\leq C|\tilde{\gamma
}_{\kappa}^{\prime}(t)|^{2} \label{add36}%
\end{equation}
for some fixed constant $C$. 
By combining
\eqref{add30}, \eqref{add31}, \eqref{add35}, and \eqref{add36},
we obtain
\begin{equation}%
\left\Vert \tilde{\gamma}_{\kappa}^{{\prime\prime}%
}\right\Vert _{L^{2}(0,T;\mathbb{R} ^{2})}^{2}
\lesssim
\left\Vert \tilde{\gamma}_{\kappa}^{{\prime\prime}}\right\Vert _{L^{2}(0,T;\mathbb{R} ^{2})}
+
( 1 +
\left\Vert \tilde{\gamma}_{\kappa}^{{\prime\prime}}\right\Vert _{L^{2}(0,T;\mathbb{R} ^{2})}
)
\left\Vert \tilde{\gamma}_{\kappa}^{{\prime}}\right\Vert _{L^{2}(0,T;\mathbb{R} ^{2})}
\left\Vert \tilde{\gamma}_{\kappa}^{{\prime}}\right\Vert _{L^{\infty}(0,T;\mathbb{R} ^{2})}
.
\label{add41}%
\end{equation}
Note that also the $L^2$-norm of $\tilde\gamma_\kappa'$
is uniformly bounded and it remains to derive an estimate
for the $L^\infty$-norm.

We know that $\tilde{\gamma}_{\kappa}\in H^{2}(0,T;\mathbb{R} ^{2})$ and that $\tilde{\gamma}_{\kappa}^{\prime}(0)=\tilde{\gamma}_{\kappa
}^{\prime}(T)=0_{\mathbb{R} ^{2}}$. Consequently, we have for every $t\in\lbrack0,T]$%
\[
\frac{1}{2}\left\vert \tilde{\gamma}_{\kappa}^{\prime}(t)\right\vert _{\mathbb{R} ^{2}}^{2}=\int_{0}^{t}\tilde{\gamma}_{\kappa}^{\prime}(s)\cdot\tilde{\gamma
}_{\kappa}^{{\prime\prime}}(s)\d s
\quad\text{and}\quad
\frac{1}{2}\left\vert \tilde{\gamma}_{\kappa}^{\prime}(t)\right\vert _{\mathbb{R} ^{2}}^{2}=-\int_{t}^{T}\tilde{\gamma}_{\kappa}^{\prime}(s)\cdot\tilde{\gamma
}_{\kappa}^{{\prime\prime}}(s)\d s.
\]
Summing up these two equalities, we obtain%
\[
\left\vert \tilde{\gamma}_{\kappa}^{\prime}(t)\right\vert _{\mathbb{R} ^{2}}^{2}\leq\int_{0}^{T}|\tilde{\gamma}_{\kappa}^{\prime}(s)\cdot
\tilde{\gamma}_{\kappa}^{{\prime\prime}}(s)|\d s\leq\left\Vert \tilde{\gamma
}_{\kappa}^{\prime}\right\Vert _{L^{2}(0,T;\mathbb{R} ^{2})}\left\Vert \tilde{\gamma}_{\kappa}^{{\prime\prime}}\right\Vert
_{L^{2}(0,T;\mathbb{R} ^{2})},
\]
so that
$\left\Vert \tilde{\gamma}_{\kappa}^{\prime
}\right\Vert _{L^{\infty}(0,T;\mathbb{R} ^{2})}\leq\left\Vert \tilde{\gamma}_{\kappa}^{\prime}\right\Vert _{L^{2}(0,T;\mathbb{R} ^{2})}^{1/2}\left\Vert \tilde{\gamma}_{\kappa}^{{\prime\prime}}\right\Vert
_{L^{2}(0,T;\mathbb{R} ^{2})}^{1/2}
\lesssim
\left\Vert \tilde{\gamma}_{\kappa}^{{\prime\prime}}\right\Vert
_{L^{2}(0,T;\mathbb{R} ^{2})}^{1/2}
$.
We insert this estimate in \eqref{add41}
and obtain
\begin{equation}
\left\Vert \tilde{\gamma}_{\kappa}^{{\prime\prime}%
}\right\Vert _{L^{2}(0,T;\mathbb{R} ^{2})}^{2}
\lesssim
\left\Vert \tilde{\gamma}_{\kappa}^{{\prime\prime}%
}\right\Vert _{L^{2}(0,T;\mathbb{R} ^{2})}
+
\left\Vert \tilde{\gamma}_{\kappa}^{{\prime\prime}%
}\right\Vert _{L^{2}(0,T;\mathbb{R} ^{2})}^{3/2}
 +
\left\Vert \tilde{\gamma}_{\kappa}^{{\prime\prime}%
}\right\Vert _{L^{2}(0,T;\mathbb{R} ^{2})}^{1/2}\ .
\label{add42}%
\end{equation}
This implies that the norms $\left\Vert \tilde{\gamma}_{\kappa%
}^{{\prime\prime}}\right\Vert _{L^{2}(0,T;\mathbb{R} ^{2})}$ are uniformly bounded.
Consequently,
the sequence $\left(  \tilde{\gamma}%
_{\kappa}^{{\prime\prime}}\right)_{\kappa \geq1}$
converges weakly towards $\bar\gamma$
also in the space
$H^2(0,T;\R^2)$.
Passing to the limit with the boundary conditions
\eqref{add24}, we also have $\overline{\gamma
}^{\prime}(0)=\overline{\gamma}^{\prime}(T)=0_{\mathbb{R} ^{2}}$.
\end{proof}

\section{\textbf{General constrained optimization problems in Banach spaces}}
\label{subsec:soc}
In order to obtain second-order optimality conditions, sufficient or necessary, without gap between, when $\Gamma_S$ is not necessarily convex but possesses a smooth boundary,
we will reformulate our minimization problem \eqref{ROCP} in the general setting studied in
\cite[Section~5.3]{GW} in order to apply \cite[Theorem~5.7]{GW}.
The constrained optimization problem studied in \cite{GW} is
\begin{equation} \label{genminpb}
	\text{minimize}  \  f(x) \text{ subject to }g(x)\in K.
\end{equation}
Here, $X$ and $Y$ are Banach spaces, $f:X \rightarrow \mathbb{R}$, $g:X \rightarrow Y$ are twice continuously Fréchet-differentiable functions and the set $K$ is a non-empty, closed and convex subset of the Banach space $Y$.
The feasible set of \eqref{genminpb}
is defined by
\begin{equation} \label{deffeas}
\mathcal{F}:=\{x\in X;g(x)\in K\}.
\end{equation}

\subsection{\textbf{Cones and polyhedricity}}
We define some cones that will be needed later on.
Let $K$ be a closed and convex subset of some Banach space $X$ and let $\bar{x}\in K$ be given.
We define the radial cone and the normal cone of $K$ at $\bar x$
via
\begin{align*}
	\mathcal{R}_K(\bar{x}) &:= \{r(x-\bar{x}); \ x\in K ,\ r\in\R^+\}
	,\\
	\mathcal{N}_K(\bar{x}) &:= \{x^\star\in X^\star; \langle x^\star,x-\bar{x}\rangle_{X^\star,X}\leq 0, \forall x \in K \}
	,
\end{align*}
respectively.
The tangent cone from Definition~\ref{def:caddir} satisfies
\begin{equation*}
	\mathcal{T}_K(\bar x) = \cl(\mathcal{R}_K(\bar{x}))
	\qquad\text{and}\qquad
	\mathcal{N}_K(\bar{x}) = \mathcal{T}_K(\bar x)^\circ
	,
\end{equation*}
where $\cl(\cdot)$ denotes the closure
and $(\cdot)^\circ$ denotes the polar cone,
i.e.,
\begin{equation*}
	A^\circ
	:=
	\{
		x^\star \in X^\star
		;
		\forall x \in A : \langle x^\star, x \rangle_{X^\star, X} \le 0
	\}
\end{equation*}
for $A \subset X$.

We will need the following result for Cartesian products.
\begin{proposition} \label{rcpr}
	Let $Y,Z$ be Banach spaces, $K \subset Y, L \subset Z$ be closed and convex sets.
	For all $(\xi, \eta) \in K \times L$ we have
	\begin{align*}
		\mathcal{R}_{K\times L}(\xi,\eta) &= \mathcal{R}_{K}(\xi)\times\mathcal{R}_{L}(\eta)
		, \\
		\mathcal{T}_{K\times L}(\xi,\eta) &= \mathcal{T}_{K}(\xi)\times\mathcal{T}_{L}(\eta)
		, \\
		\mathcal{N}_{K\times L}(\xi,\eta) &= \mathcal{N}_{K}(\xi)\times\mathcal{N}_{L}(\eta)
		.
	\end{align*}
\end{proposition}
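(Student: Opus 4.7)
The plan is to prove the three identities in the order they are stated, since each one implies the next by applying a general operation (closure for tangent cones, polar for normal cones).

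For the radial cone, the forward inclusion $\mathcal{R}_{K\times L}(\xi,\eta) \subset \mathcal{R}_K(\xi)\times\mathcal{R}_L(\eta)$ is immediate by reading off the components of an arbitrary element $(u,v) = r((x,y)-(\xi,\eta))$. The only genuinely nontrivial point is the reverse inclusion, where arbitrary representations $u = r_1(x-\xi)$ and $v = r_2(y-\eta)$ may involve \emph{different} scaling factors. I would resolve this by setting $r := \max(r_1,r_2)$ (handling the degenerate case $r=0$ separately) and exploiting convexity of $K$ and $L$: the points $x' := (r_1/r) x + (1-r_1/r)\xi \in K$ and $y' := (r_2/r) y + (1-r_2/r)\eta \in L$ satisfy $(u,v) = r((x',y')-(\xi,\eta))$, which exhibits $(u,v) \in \mathcal{R}_{K\times L}(\xi,\eta)$.

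For the tangent cone, I would invoke the formula $\mathcal{T}_K(\bar x) = \cl(\mathcal{R}_K(\bar x))$ recalled just before the proposition, together with the elementary topological fact that closures commute with Cartesian products in the product Banach space $Y\times Z$. Combined with the radial-cone identity already established, this immediately delivers the tangent-cone identity. For the normal cone, I would use $\mathcal{N}_K(\bar x) = \mathcal{T}_K(\bar x)^\circ$ together with the product-polar identity $(A\times B)^\circ = A^\circ \times B^\circ$ for nonempty cones $A \subset Y,\, B \subset Z$ containing the origin, under the canonical identification $(Y\times Z)^\star = Y^\star \times Z^\star$. The inclusion $A^\circ \times B^\circ \subset (A\times B)^\circ$ is obtained by adding the two defining inequalities, and the reverse inclusion by testing an arbitrary element of $(A\times B)^\circ$ against the pairs $(y,0)$ and $(0,z)$, which is legal precisely because $0$ lies in both cones. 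Since $0 \in \mathcal{T}_K(\xi) \cap \mathcal{T}_L(\eta)$, this product-polar identity applies.

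I do not anticipate any substantive obstacle: the statement is a standard product rule in convex analysis. The two small places where care is warranted are the scaling-mismatch step in the radial-cone argument (which requires convexity of the factors) and the need for $0$ to lie in each cone before invoking the product-polar identity (which holds automatically for tangent cones to convex sets at feasible points).
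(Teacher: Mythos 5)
Your proposal is correct and follows essentially the same route as the paper, which only sketches the argument: the radial-cone identity by a direct computation using convexity of the factors, the tangent-cone identity by taking closures, and the normal-cone identity by taking polars. Your rescaling trick with $r=\max(r_1,r_2)$ is exactly the ``simple calculation owing to the convexity of $K$ and $L$'' that the paper alludes to, and the remaining steps (closures commute with products; polars of products of cones containing $0$) are the standard facts the paper implicitly relies on.
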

The first identity follows from a simple calculation owing to the
convexity of $K$ and $L$.
The second identity follows by taking the closure
and taking polars yield the third one.

In order to obtain sharp second-order conditions,
we utilize the notion of polyhedricity.
\begin{definition} \cite{GW}
Let $K$ be a closed and convex subset of the Banach space $Y$. 
We say that $K$ is polyhedric at $(y,\mu)$ with $y\in K$ and $\mu\in\mathcal{N}_{K}(y)$ if
 \begin{equation*}
\mathcal{T}_{K}(y)\cap\mu^{\perp}=\cl(\mathcal{R}_{K}(y)\cap\mu^{\perp}),
 \end{equation*}
where
\begin{equation*}
	\mu^{\perp}
	=
	\{z \in Y; \dual{\mu}{z}_{Y^*,Y} = 0\}
\end{equation*}
denotes the kernel of the continuous linear form $\mu$ on $Y$. 
We say that $K$ is polyhedric at $y\in K$, if $K$ is polyhedric at $(y, \mu)$ for all $\mu \in \mathcal{N}_{K}(y)$.
Finally, we say that $K$ is polyhedric, if it is polyhedric at all $y \in K$.  
\end{definition}
 
\begin{example} \label{exa:poly}
 \begin{enumerate}
 \item
 Let us consider $Y=H^1(0,T)$ where $T>0$. Then 
 \begin{equation*}
 K=H^1(0,T)_{-}:=\{\phi \in H^1(0,T); \phi \leq 0 \}
 \end{equation*}
 is polyhedric. This follows from \cite[Example 4.21.(3)]{GW} or by applying Theorem 3.58 of \cite{BoSh}.
 \item
	 Let $K \subset \mathbb{R}^n$ be a convex polyhedron,
	 i.e., the intersection of finitely many closed halfspaces.
	 Then $K$ is polyhedric,
	 since $\RR_K(\bar x)$ is closed for all $\bar x \in K$.
 \end{enumerate}
\end{example}

By utilizing Proposition~\ref{rcpr},
we can show that Cartesian products of polyhedric sets are again polyhedric:
\begin{proposition} \label{prop:pps}
	Let $Y,Z$ be Banach spaces, $K \subset Y, L \subset Z$ be closed and convex sets.
	If $K$ and $L$ are polyhedric
	then $K \times L$ is also polyhedric.
\end{proposition}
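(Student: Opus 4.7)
The plan is to leverage Proposition~\ref{rcpr} to reduce the polyhedricity of $K \times L$ to the polyhedricity of $K$ and $L$ componentwise. Fix $(\bar y,\bar z) \in K \times L$ and $(\mu,\nu) \in \mathcal{N}_{K\times L}(\bar y,\bar z)$. By Proposition~\ref{rcpr}, we have $\mu \in \mathcal{N}_K(\bar y)$ and $\nu \in \mathcal{N}_L(\bar z)$. The goal is to prove
\begin{equation*}
\mathcal{T}_{K\times L}(\bar y,\bar z) \cap (\mu,\nu)^\perp
=
\cl\bigl(\mathcal{R}_{K\times L}(\bar y,\bar z) \cap (\mu,\nu)^\perp\bigr).
\end{equation*}

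The first step is to observe the factorization of the ``$\cap (\mu,\nu)^\perp$'' condition. For $(d_1,d_2) \in \mathcal{T}_{K\times L}(\bar y,\bar z) = \mathcal{T}_K(\bar y) \times \mathcal{T}_L(\bar z)$, one has $\dual{\mu}{d_1}_{Y^\star,Y} \le 0$ and $\dual{\nu}{d_2}_{Z^\star,Z} \le 0$ because $\mathcal{N}_K(\bar y) = \mathcal{T}_K(\bar y)^\circ$ and similarly for $L$. Hence $(d_1,d_2) \in (\mu,\nu)^\perp$, i.e., $\dual{\mu}{d_1} + \dual{\nu}{d_2} = 0$, forces both $\dual{\mu}{d_1} = 0$ and $\dual{\nu}{d_2} = 0$. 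This yields
\begin{equation*}
\mathcal{T}_{K\times L}(\bar y,\bar z) \cap (\mu,\nu)^\perp
=
\bigl(\mathcal{T}_K(\bar y) \cap \mu^\perp\bigr) \times \bigl(\mathcal{T}_L(\bar z) \cap \nu^\perp\bigr),
\end{equation*}
and the same simple argument applied to the radial cone gives $\mathcal{R}_{K\times L}(\bar y,\bar z) \cap (\mu,\nu)^\perp = (\mathcal{R}_K(\bar y) \cap \mu^\perp) \times (\mathcal{R}_L(\bar z) \cap \nu^\perp)$.

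Next I apply the polyhedricity of $K$ at $(\bar y,\mu)$ and of $L$ at $(\bar z,\nu)$: these give $\mathcal{T}_K(\bar y) \cap \mu^\perp = \cl(\mathcal{R}_K(\bar y) \cap \mu^\perp)$ and $\mathcal{T}_L(\bar z) \cap \nu^\perp = \cl(\mathcal{R}_L(\bar z) \cap \nu^\perp)$. Since the product of closures equals the closure of the product (in the product topology of $Y \times Z$), combining with the two factorizations above yields the required polyhedric identity for $K \times L$ at $((\bar y,\bar z),(\mu,\nu))$. Varying $(\bar y,\bar z)$ and $(\mu,\nu)$, the result follows.

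No step is really a significant obstacle, the argument is essentially a bookkeeping exercise; the only point that requires a moment of care is the ``sum of two nonpositives equal to zero forces each to vanish'' observation, which is what makes the orthogonality condition split across the Cartesian product. Everything else is an immediate consequence of Proposition~\ref{rcpr} and the hypothesis that $K$ and $L$ are polyhedric.
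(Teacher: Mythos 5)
Your proof is correct and follows essentially the same route as the paper: factor the tangent, radial, and orthogonality conditions across the product via Proposition~\ref{rcpr}, apply polyhedricity componentwise, and use that the closure of a product is the product of the closures. You additionally spell out why $(\mu,\nu)^\perp$ splits (the ``sum of two nonpositives equal to zero'' observation), a detail the paper leaves implicit.
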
 
\begin{proof}
Let $(y,z) \in K \times L$
and $(\mu,\lambda) \in \mathcal{N}_{K \times L}(y,z)$ be given.
We have
\begin{align*}
	\MoveEqLeft
\mathcal{T}_{K \times L}(y,z)\cap (\mu,\lambda)^{\perp}
\\
&=
(\mathcal{T}_{K}(y)\cap \mu ^{\perp}) \times (\mathcal{T}_{L}(z)\cap \lambda ^{\perp})
=
\cl(\mathcal{R}_{K}(y)\cap \mu ^{\perp}) \times \cl(\mathcal{R}_{L}(z)\cap \lambda ^{\perp})
\\
&=
\cl(\mathcal{R}_{K}(y)\cap \mu ^{\perp}) \times \mathcal{R}_{L}(z)\cap \lambda ^{\perp})
=
\cl(\mathcal{R}_{K \times L}(y,z)\cap (\mu,\lambda)^{\perp}).
\end{align*}
Thus, $K \times L$ is polyhedric.
\end{proof}  

\subsection{\textbf{First-order optimality conditions and constraint qualifications}}
We briefly recall first-order optimality conditions for Problem \eqref{genminpb} and
constraint qualifications which imply the existence (and uniqueness) of Lagrange multipliers.

The Lagrangian associated to Problem \eqref{genminpb} is
\begin{equation}
L:X \times Y^\star \rightarrow \mathbb{R}:(x,\lambda)\mapsto L(x,\lambda):=f(x)+\langle \lambda,g(x)\rangle _{Y^\star,Y}.
\end{equation}

\begin{definition}\label{statpoint}
A functional $\bar{\lambda} \in Y^{\star}$ is said to be a Lagrange multiplier at $\bar{x} \in \mathcal{F}$ if and only if
\begin{align*}
	\bar{\lambda} &\in \mathcal{N}_K(g(\bar{x})) ,
	\\
	L'(\bar{x},\bar{\lambda}) &= f'(\bar{x})+g'(\bar{x})^\star \bar{\lambda}=0_{X^\star} .
\end{align*}
Here, $L'$ denotes the partial derivative of $L$ with respect to $x$.

The ordered pair $(\bar{x},\bar{\lambda})$ is called a KKT point of \eqref{genminpb} and $\bar{x}$ is said to be a stationary point.
\end{definition}

From \cite[Lemma~3.7(i)]{BoSh}
we get the following result.
\begin{proposition} \label{implfoc}
If $\bar{x} \in \mathcal{F}$ is a local minimizer of Problem \eqref{genminpb}, then $\left\langle f^{\prime}(\bar x)  ,d\right\rangle \geq0$ for every $d \in \TT_{\mathcal{F}}(\bar x)$,
where $\TT_{\mathcal{F}}(\bar x)$ denotes the tangent cone to $\mathcal{F}$ at $\bar x$ defined by \eqref{caddir} with $V=X$.
\end{proposition}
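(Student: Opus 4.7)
The plan is to argue directly from the definition of the tangent cone together with the Fréchet differentiability of $f$ and the local minimality of $\bar{x}$. Fix an arbitrary $d \in \TT_{\mathcal{F}}(\bar{x})$; by Definition~\ref{def:caddir} there exist sequences $(x_n) \subset \mathcal{F}$ with $x_n \to \bar{x}$ and $(t_n) \subset \R_+^*$ with $t_n \to 0$ such that $(x_n - \bar{x})/t_n \to d$ in $X$. The key observation is that this forces $\|x_n - \bar{x}\|_X = t_n\bigl(\|d\|_X + o(1)\bigr)$, so $\|x_n - \bar{x}\|/t_n$ is bounded as $n \to \infty$.

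Next I would exploit the local minimality. Since $x_n \to \bar{x}$ and $x_n \in \mathcal{F}$, for $n$ large enough one has $f(x_n) \geq f(\bar{x})$. Using the Fréchet differentiability of $f$ at $\bar{x}$ one writes
\begin{equation*}
0 \;\leq\; f(x_n) - f(\bar{x}) \;=\; \langle f'(\bar{x}), x_n - \bar{x}\rangle + r(x_n),
\end{equation*}
where $r(x_n)/\|x_n-\bar{x}\|_X \to 0$. Dividing by $t_n > 0$ yields
\begin{equation*}
0 \;\leq\; \Bigl\langle f'(\bar{x}),\, \tfrac{x_n - \bar{x}}{t_n}\Bigr\rangle \;+\; \tfrac{r(x_n)}{\|x_n - \bar{x}\|_X}\cdot \tfrac{\|x_n - \bar{x}\|_X}{t_n}.
\end{equation*}

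Passing to the limit $n \to \infty$, the first term converges to $\langle f'(\bar{x}), d\rangle$ by continuity of $f'(\bar{x})\in X^\star$, while the second term is the product of a null sequence and a bounded sequence, hence converges to $0$. This gives $\langle f'(\bar{x}), d\rangle \geq 0$, and since $d \in \TT_{\mathcal{F}}(\bar{x})$ was arbitrary, the proof is complete. There is no real obstacle here; the only subtle point is the boundedness of $\|x_n-\bar{x}\|_X/t_n$, which is what lets the remainder term vanish in the limit.
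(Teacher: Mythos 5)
Your proof is correct and is exactly the standard argument: the paper itself does not prove Proposition~\ref{implfoc} but merely cites \cite[Lemma~3.7(i)]{BoSh}, and your direct computation from Definition~\ref{def:caddir} (local minimality plus Fr\'echet differentiability, with the remainder controlled via the boundedness of $\norm{x_n-\bar x}_X/t_n$) is precisely the proof behind that citation. The only cosmetic point is the degenerate case $x_n=\bar x$, where the quotient $r(x_n)/\norm{x_n-\bar x}_X$ is formally $0/0$; this is harmless since the remainder is then exactly $0$.
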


In what follows,
we will assume that
\begin{equation} \label{gcqc}
Y=g'(\bar{x})X-\mathcal{R}_{K}(g(\bar{x}))
\end{equation}
holds,
which is the so-called
Robinson-Zowe-Kurcyusz Constraint Qualification at $\bar{x}$.
Note that this implies
\begin{equation} \label{acq}
	\TT_{\mathcal{F}}(\bar x)
	=
	\{
		d \in X;
		g'(\bar x) d \in \TT_K(g(\bar x))
	\}
	,
\end{equation}
see \cite{ZK}.

\begin{theorem} \cite[Theorem 3.1]{ZK} \label{exLagMult}
Let $\bar{x}\in \mathcal{F}$ be given
and assume that the Robinson-Zowe-Kurcyusz Constraint Qualification \eqref{gcqc} is satisfied at $\bar{x}$.
Further,
suppose that
$\langle f'(\bar x),d\rangle _{X^{\star},X}\geq 0$ for all $d \in \TT_{\mathcal{F}}(\bar x)$.
Then the set of Lagrange multipliers at $\bar{x}$ is non-empty.
In particular, if $\bar{x}\in \mathcal{F}$ is a local minimizer of Problem \eqref{genminpb} satisfying the Robinson-Zowe-Kurcyusz Constraint Qualification \eqref{gcqc}, then the set of Lagrange multipliers at $\bar{x}$ is non-empty.  
\end{theorem}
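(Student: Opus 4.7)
The plan is to translate the primal first-order condition into a dual condition via an infinite-dimensional Farkas-type lemma, with the Robinson--Zowe--Kurcyusz (RZK) condition \eqref{gcqc} providing the closedness/surjectivity needed to apply a Hahn--Banach separation.

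First, I would invoke the identity \eqref{acq} (which the excerpt attributes to \cite{ZK} and which follows from \eqref{gcqc}). This turns the hypothesis $\langle f'(\bar x), d\rangle_{X^\star,X} \geq 0$ for all $d \in \mathcal{T}_{\mathcal{F}}(\bar x)$ into the equivalent linearized statement: $\langle f'(\bar x), d\rangle_{X^\star,X} \geq 0$ for every $d \in X$ with $g'(\bar x) d \in \mathcal{T}_K(g(\bar x))$. In other words, $-f'(\bar x)$ lies in the polar of the linearizing cone $\mathcal{L} := \{d \in X : g'(\bar x) d \in \mathcal{T}_K(g(\bar x))\}$.

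Second, producing $\bar\lambda \in \mathcal{N}_K(g(\bar x)) = \mathcal{T}_K(g(\bar x))^\circ$ with $f'(\bar x) + g'(\bar x)^\star \bar\lambda = 0$ amounts to showing $-f'(\bar x) \in g'(\bar x)^\star \mathcal{T}_K(g(\bar x))^\circ$. Hence the whole matter reduces to the polar calculus identity
$$\mathcal{L}^\circ = g'(\bar x)^\star \mathcal{T}_K(g(\bar x))^\circ.$$
The inclusion $\supset$ is elementary from the definitions of the polar cone and adjoint. Combined with the first step, the identity immediately yields the desired multiplier.

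The main obstacle is the reverse inclusion $\mathcal{L}^\circ \subset g'(\bar x)^\star \mathcal{T}_K(g(\bar x))^\circ$, which is exactly where \eqref{gcqc} must be exploited. My intention would be to separate, via Hahn--Banach in $Y \times \mathbb{R}$, the graph-type convex set $\{(g'(\bar x) d - v, \langle f'(\bar x), d\rangle) : d \in X,\ v \in \mathcal{R}_K(g(\bar x))\}$ from the half-space $Y \times (-\infty, 0)$; the RZK condition guarantees that the first component $g'(\bar x) X - \mathcal{R}_K(g(\bar x))$ covers all of $Y$, and via Robinson's stability theorem this surjectivity upgrades to an openness/metric-regularity estimate on the cone-valued map $d \mapsto g'(\bar x) d - \mathcal{R}_K(g(\bar x))$, which prevents a degenerate separation and delivers the required continuous linear functional $\bar\lambda$. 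The final assertion (``In particular'') then follows at once by combining the first part with Proposition~\ref{implfoc}, applied to the local minimizer $\bar x$.
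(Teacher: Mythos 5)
The paper does not prove this theorem at all: it is quoted verbatim from Zowe--Kurcyusz \cite[Theorem 3.1]{ZK}, so there is no in-paper argument to compare against, only the classical one. Your outline is essentially that classical proof: pass from $\TT_{\mathcal F}(\bar x)$ to the linearized cone $\mathcal L$ via \eqref{acq}, reduce the existence of $\bar\lambda$ to the generalized Farkas identity $\mathcal L^\circ = g'(\bar x)^\star \TT_K(g(\bar x))^\circ$, obtain the easy inclusion directly, and use the RZK condition together with the generalized open mapping theorem to make the Hahn--Banach separation non-degenerate; the ``in particular'' part via Proposition~\ref{implfoc} is also right. One step is stated incorrectly, though: you cannot separate the set $M_0:=\set{(g'(\bar x)d - v,\, \langle f'(\bar x),d\rangle) : d\in X,\ v\in \RR_K(g(\bar x))}$ from the half-space $Y\times(-\infty,0)$, because these two sets intersect whenever $f'(\bar x)\ne 0$ (take any $d$ with $\langle f'(\bar x),d\rangle<0$; nothing forces $g'(\bar x)d-v=0$ there). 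The correct target is the ray $\set{0_Y}\times(-\infty,0)$, or equivalently one separates the single point $(0_Y,-1)$ from the convex cone $M:=M_0+\set{0_Y}\times\R_{\ge 0}$, which misses that point precisely because $g'(\bar x)d\in\RR_K(g(\bar x))\subset\TT_K(g(\bar x))$ forces $\langle f'(\bar x),d\rangle\ge 0$. With that repair, the RZK condition yields (via the open-mapping/Robinson stability estimate you invoke) that the separating functional has a nonzero, positive $\R$-component, which after normalization produces $\bar\lambda\in\NN_K(g(\bar x))$ with $f'(\bar x)+g'(\bar x)^\star\bar\lambda=0$, exactly as in \cite{ZK}.
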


In general the Lagrange multiplier at $\bar{x}$ is not unique.
In order to obtain uniqueness,
we have to strengthen the condition \eqref{gcqc}.
In fact,
the strict Robinson-Zowe-Kurcyusz condition at $\bar{x}$,
namely
\begin{equation} \label{sgcqc}
	Y=g'(\bar{x})X-(\mathcal{R}_{K}(g(\bar{x}))\cap\bar{\lambda}^{\perp}),
\end{equation}
implies 
that the multiplier $\bar{\lambda}$ is the unique Lagrange multiplier at $\bar{x}$ for Problem \eqref{genminpb},
see
\cite[Theorem~2.2]{SHA}.

\subsection{\textbf{Necessary and sufficient second-order optimality conditions}}
We recall
necessary and sufficient second-order optimality conditions for the general optimization problem \eqref{genminpb}
from \cite{GW}.
To state these second-order conditions,
we need the notion of the critical cone at a point $\bar{x}\in\mathcal{F}$.
 
\begin{definition} \cite[Section~5.3]{GW}
	Let $\bar{x}$ be a feasible point of Problem \eqref{genminpb}, i.e., $\bar{x} \in \mathcal{F}$.
	The critical cone $C(\bar{x})$ at $\bar{x}$ is defined via
	\begin{equation} \label{genccindmul}
		C(\bar{x}):=\{\delta x \in X;g'(\bar{x})\delta x \in \mathcal{T}_{K}(g(\bar{x})) \text{ and }f'(\bar{x})\delta x \leq 0 \}.
	\end{equation}   
\end{definition}

The critical cone $C(\bar{x})$ is a closed and convex cone.
If $\bar{x}$ is a stationary point,
it is straightforward to check
\begin{equation} \label{ccindmul}
	C(\bar{x})=\{\delta x \in X;g'(\bar{x})\delta x \in \mathcal{T}_{K}(g(\bar{x})) \text{ and }f'(\bar{x})\delta x = 0 \}.
\end{equation}

As a final ingredient,
we need the notion of a Legendre form.
\begin{definition}
	\label{def:legendre} \cite[p.184]{FreBo}
	Let $a \colon X \times X$ be a bounded bilinear form.
	We say that $a$ is a Legendre form
	if
	$h \mapsto a(h,h)$ is sequentially weakly lower semicontinuous
	and
	\begin{equation*}
		h_k \weakly h \text{ and } a(h_k,h_k) \to a(h,h)
		\quad\Rightarrow\quad
		h_k \to h
	\end{equation*}
	holds for all sequences $\{h_k\} \subset X$.
\end{definition}

By $L''$
we denote the second-order partial derivative of the Lagrangian $L$ w.r.t.\ $x$,
i.e.,
\begin{equation*}
	L''(\bar{x},\bar{\lambda})[\delta x_1,\delta x_2]
	=
	f''(\bar{x})[\delta x_1,\delta x_2]
	+
	\langle \bar\lambda,g''(\bar{x})[\delta x_1,\delta x_2]\rangle _{Y^{\star},Y}
\end{equation*}
for all $\delta x_1, \delta x_2 \in X$.
We are now in position to state necessary or sufficient optimality conditions for Problem \eqref{genminpb}.

\begin{theorem}\label{theo:CSN} \cite[Theorem~5.7]{GW}
	Let $(\bar x, \bar\lambda)$ be a KKT point
	of Problem \eqref{genminpb}.
	Further, we assume that
	\begin{enumerate}
		\item
			$X$ is a Hilbert space,
		\item
			$K$ is polyhedric at $(g(\bar x), \bar\lambda)$,
		\item
			the strict Robinson-Zowe-Kurcyusz condition \eqref{sgcqc} holds,
		\item
			$L''(\bar x, \bar\lambda) : X \times X \to \R$
			is a Legendre form.
	\end{enumerate}
	Then, the following holds.
	\begin{enumerate}[label=(\roman*)]
		\item
			If $\bar x$ is a local minimizer,
			then
			\begin{equation}
				\label{wgcqc1}
				L''(\bar{x},\bar{\lambda})[\delta x,\delta x]
				\geq
				0
				\qquad\forall \delta x \in C(\bar x).
			\end{equation}
		\item
			If
			\begin{equation}
				\label{wgcqc2}
				L''(\bar{x},\bar{\lambda})[\delta x,\delta x]
				>
				0
				\qquad\forall \delta x \in C(\bar x) \setminus \{0\}
			\end{equation}
			then $\bar x$ is a local minimizer
			and the quadratic growth condition
			\begin{equation} \label{qgcd}
				f(x)\geq f(\bar{x})+ \alpha\lVert x-\bar{x}\rVert_{X}^{2}
				\qquad\forall x \in \mathcal{F} \cap \bar{B}(\bar x;\varepsilon)
			\end{equation}   
			is satisfied for some constants $\alpha,\varepsilon > 0$.
	\end{enumerate}
\end{theorem}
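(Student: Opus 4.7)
The plan is to adapt the classical two-norm and polyhedricity framework for second-order conditions in Banach-space optimization. The essential ingredients are already available: the strict Robinson--Zowe--Kurcyusz condition \eqref{sgcqc} (which yields Robinson-type stability for $\mathcal{F}$ and uniqueness of $\bar\lambda$), polyhedricity of $K$ at $(g(\bar x),\bar\lambda)$ (which identifies the critical cone with the closure of the radial critical directions intersected with $\bar\lambda^\perp$), and the Legendre form property of $L''(\bar x,\bar\lambda)$ (which provides both weak lower semicontinuity and the weak-to-strong upgrade along sequences with converging quadratic values). Since the statement is \cite[Theorem~5.7]{GW}, one may invoke it once the four hypotheses are checked, but the sketch below indicates how the argument runs.

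For the necessary part (i), I would first pick a radial critical direction, i.e.\ $\delta x \in X$ with $g'(\bar x)\delta x \in \RR_K(g(\bar x))\cap\bar\lambda^\perp$ and $f'(\bar x)\delta x\le 0$. The strict RZK condition \eqref{sgcqc} combined with Robinson's stability theorem produces, for small $t>0$, a path $x(t)\in\mathcal{F}$ with $x(t)=\bar x + t\,\delta x + o(t)$. Using $\bar\lambda\in\NN_K(g(\bar x))$ and $g(x(t))\in K$ one has $\langle\bar\lambda,g(x(t))-g(\bar x)\rangle\le 0$, so
\begin{equation*}
f(x(t))-f(\bar x)
\le L(x(t),\bar\lambda)-L(\bar x,\bar\lambda)
= \tfrac{t^2}{2}\,L''(\bar x,\bar\lambda)[\delta x,\delta x]+o(t^2).
\end{equation*}
The local optimality of $\bar x$ then forces $L''(\bar x,\bar\lambda)[\delta x,\delta x]\ge 0$ on the radial critical directions. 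Polyhedricity of $K$ at $(g(\bar x),\bar\lambda)$ implies that $C(\bar x)$ is the closure of these radial directions; the weak sequential lower semicontinuity of $\delta x\mapsto L''(\bar x,\bar\lambda)[\delta x,\delta x]$ (a property of Legendre forms) then extends the inequality to all $\delta x\in C(\bar x)$.

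For the sufficient part (ii), I would argue by contradiction. If \eqref{qgcd} fails for every $\alpha,\varepsilon>0$, then one obtains $x_k\in\mathcal{F}$, $x_k\to\bar x$, with $f(x_k)<f(\bar x)+(1/k)\norm{x_k-\bar x}_X^2$. Setting $t_k:=\norm{x_k-\bar x}_X$ and $h_k:=(x_k-\bar x)/t_k$, and using that $X$ is a Hilbert space, one may pass to a subsequence $h_k\weakly h$. Feasibility of $x_k$, the tangent-cone characterisation \eqref{acq}, and the bound on $f(x_k)$ yield $h\in C(\bar x)$. A second-order Taylor expansion of $L$ at $\bar x$ applied to $x_k$, together with $\langle\bar\lambda,g(x_k)-g(\bar x)\rangle\le 0$ and $L'(\bar x,\bar\lambda)=0$, gives
\begin{equation*}
\tfrac{t_k^2}{2}\,L''(\bar x,\bar\lambda)[h_k,h_k]+o(t_k^2)\le f(x_k)-f(\bar x)<\tfrac{1}{k}t_k^2,
\end{equation*}
hence $\limsup_k L''(\bar x,\bar\lambda)[h_k,h_k]\le 0$. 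If $h\ne 0$, weak lower semicontinuity combined with \eqref{wgcqc2} gives $0<L''(\bar x,\bar\lambda)[h,h]\le\liminf_k L''(\bar x,\bar\lambda)[h_k,h_k]\le 0$, a contradiction. If $h=0$, then the Legendre form property upgrades $h_k\weakly 0$ to $h_k\to 0$ strongly, contradicting $\norm{h_k}_X=1$.

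The main obstacle is the orchestration of the three structural assumptions. Polyhedricity is what allows one to replace an arbitrary critical direction by a radial one and pass to the limit by closure; the strict RZK condition is what produces feasible $C^{1}$-paths tangent to the prescribed radial critical direction; and the Legendre form property is what makes the limiting argument rigorous in both directions. Each of these is verified in the sequel for the reformulated problem \eqref{ourminpb}, so once they are in place the above scheme (equivalently, a direct appeal to \cite[Theorem~5.7]{GW}) delivers the conclusion.
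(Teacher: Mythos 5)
The paper offers no proof of this theorem at all: it is imported verbatim as \cite[Theorem~5.7]{GW}, so your primary move --- check the four hypotheses and invoke the citation --- is exactly what the paper does, and your identification of the roles of polyhedricity, the strict Robinson--Zowe--Kurcyusz condition and the Legendre form is accurate. Your sketch of part (ii) is also the standard, correct contradiction argument (weak lower semicontinuity when $h\neq 0$, the weak-to-strong upgrade against $\norm{h_k}_X=1$ when $h=0$).

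Your sketch of part (i), however, contains a genuine error. Since $\bar\lambda\in\NN_K(g(\bar x))$ and $g(x(t))\in K$, the complementarity term satisfies $\langle\bar\lambda,g(x(t))-g(\bar x)\rangle\le 0$, and therefore
\begin{equation*}
L(x(t),\bar\lambda)-L(\bar x,\bar\lambda)
= f(x(t))-f(\bar x)+\langle\bar\lambda,g(x(t))-g(\bar x)\rangle
\;\le\; f(x(t))-f(\bar x),
\end{equation*}
which is the \emph{reverse} of the inequality you display. Combined with local optimality $f(x(t))-f(\bar x)\ge 0$, this chain gives no sign information on $L''(\bar x,\bar\lambda)[\delta x,\delta x]$: the complementarity term could a priori be negative of order $t^2$ and swallow the second-order term. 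The actual proof must construct the feasible arc so that $\langle\bar\lambda,g(x(t))-g(\bar x)\rangle=o(t^2)$, which is precisely where the radiality of $g'(\bar x)\delta x$ in $\RR_K(g(\bar x))\cap\bar\lambda^\perp$ and the stability provided by the strict constraint qualification are genuinely used (one arranges $g(x(t))$ to stay, up to $o(t^2)$, on the face of $K$ on which $\bar\lambda$ vanishes). A second, smaller point: passing from radial critical directions to all of $C(\bar x)$ is not a direct consequence of polyhedricity of $K$ alone; one must also lift the density statement through $g'(\bar x)$ to the critical cone in $X$, which again uses the constraint qualification. Neither issue affects the validity of citing \cite[Theorem~5.7]{GW}, but as a standalone proof the necessary-condition half does not close as written.
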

Note that the gap between the necessary condition \eqref{wgcqc1}
and the sufficient condition \eqref{wgcqc2}
is as small as possible.
If one is only interested in necessary or sufficient conditions,
then some of the assumptions can be relaxed,
see
\cite[Theorems~5.4, 5.6]{GW}.
 
\section{\textbf{Verification of the conditions for the application of the general theory}}
\label{sec:soc}
In this section,
we verify the assumptions of Theorem~\ref{theo:CSN}
for our optimal control problem \eqref{OCP}.
First,
we simplify the problem by removing the dependence on $\tau$
and verify the constraint qualification.
Afterwards, this result is utilized to prove
the constraint qualification for Problem \eqref{OCP}.
Finally, we check that the second derivative of the Lagrangian
is a Legendre form.

\subsection{\textbf{A problem without \texorpdfstring{$\tau$}{tau}-dependence}}
We study a simplified constraint
in which we remove the $\tau$-dependence.
That is,
we introduce the constraint mapping
\begin{equation} \label{defopG}
G: H^1(0,T;\mathbb{R}^2) \rightarrow H^1(0,T): \gamma \mapsto g\circ\gamma
\end{equation}
and consider the constraint
\begin{equation}
	\label{eq:constraint_simplified}
	G(\gamma) \in K,
\end{equation}
where $K$ is the closed convex cone
\begin{equation} \label{introdeK}
	K
	:=
	H^1(0,T)_-
	:=
	\set{v \in H^1(0,T) ;\ v \leq 0}
	\subset
	H^1(0,T)
	.
\end{equation}
Note that
we get
\begin{equation*}
	\Uad
	=\set{ \gamma \in H^1(0,T;\mathbb{R}^2);\ 
	G(\gamma) \in K},
\end{equation*}
by the definition \eqref{eq:setadmcdtsb} of $\Uad$,
i.e.,
\eqref{eq:constraint_simplified}
encodes the constraint on $\gamma$
in our control problem \eqref{OCP}.

\begin{proposition}
\label{GfD}
The mapping $G$ from \eqref{defopG}
is twice continuously differentiable
and
the derivatives are given by
\begin{subequations}
	\begin{align}
		DG(\gamma) \delta\gamma
		&=
		g'(\gamma) \delta\gamma
		&&
		\forall\delta\gamma\in H^{1}(0,T;\mathbb{R} ^{2})
		,
		\label{0ba}%
		\\
		D^{2}G(\gamma) [\delta\gamma,\widetilde{\delta\gamma}]
		&=
		\delta\gamma^\top \nabla^2 g(\gamma) \widetilde{\delta\gamma}
		&&
		\forall
		\delta\gamma, \widetilde{\delta\gamma} \in H^{1}(0,T;\mathbb{R} ^{2})
		.
		\label{21ba}%
	\end{align}
	Here, $g'$ and $\nabla^2 g$ denote the Jacobian and the Hessian of $g$, respectively.
\end{subequations}
\end{proposition}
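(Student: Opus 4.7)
The plan is to exploit the continuous embedding $H^1(0,T;\mathbb{R}^2)\hookrightarrow C([0,T];\mathbb{R}^2)\cap L^\infty(0,T;\mathbb{R}^2)$ so that pointwise Taylor expansions of $g$ on a fixed compact subset of $\mathbb{R}^2$ can be combined with elementary $L^p$-product estimates on $(0,T)$. Because $g\in C^3(\mathbb{R}^2)$, the functions $g,\nabla g,\nabla^2g,\nabla^3g$ are uniformly bounded and Lipschitz on any compact set containing the ranges of the trajectories that will arise.

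First I would check well-definedness: for $\gamma\in H^1(0,T;\mathbb{R}^2)$, the classical chain rule for absolutely continuous functions gives $(g\circ\gamma)'=\nabla g(\gamma)\cdot\gamma'$ almost everywhere. Since $\nabla g(\gamma)\in L^\infty(0,T;\mathbb{R}^2)$ and $\gamma'\in L^2(0,T;\mathbb{R}^2)$, this derivative lies in $L^2(0,T)$, hence $G(\gamma)\in H^1(0,T)$. Next, for the candidate first derivative \eqref{0ba}, I would study the remainder $r_\gamma(h):=G(\gamma+h)-G(\gamma)-g'(\gamma)h$. Pointwise Taylor yields $|g(\gamma+h)-g(\gamma)-\nabla g(\gamma)\cdot h|\leq C|h|^2$, and differentiating in time produces
\begin{equation*}
r_\gamma(h)'=[\nabla g(\gamma+h)-\nabla g(\gamma)-\nabla^2 g(\gamma)h]\cdot\gamma'+[\nabla g(\gamma+h)-\nabla g(\gamma)]\cdot h'.
\end{equation*}
The first summand is pointwise $O(|h|^2)$ against $\gamma'\in L^2$, giving an $L^2$-norm bounded by $C\|h\|_{L^\infty}^2\|\gamma'\|_{L^2}\leq C'\|h\|_{H^1}^2$; the second is pointwise $O(|h||h'|)$, whose $L^2$-norm is bounded by $\|h\|_{L^\infty}\|h'\|_{L^2}\leq C\|h\|_{H^1}^2$. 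Together, $\|r_\gamma(h)\|_{H^1(0,T)}=O(\|h\|_{H^1}^2)$, as required. Continuity of $DG$ at $\gamma$ follows from the analogous decomposition: if $\gamma_n\to\gamma$ in $H^1$, then $\gamma_n\to\gamma$ in $L^\infty$, so $\nabla g(\gamma_n)\to\nabla g(\gamma)$ and $\nabla^2 g(\gamma_n)\to\nabla^2 g(\gamma)$ in $L^\infty$ by uniform continuity on a fixed compact set; combined with $\gamma_n'\to\gamma'$ in $L^2$, this gives convergence in the operator norm $\mathcal{L}(H^1(0,T;\mathbb{R}^2),H^1(0,T))$.

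For the second derivative I propose the bilinear mapping \eqref{21ba}. Its values lie in $H^1(0,T)$ because the product rule gives
\begin{equation*}
\frac{d}{dt}\bigl[\delta\gamma^\top\nabla^2 g(\gamma)\widetilde{\delta\gamma}\bigr]=(\delta\gamma')^\top\nabla^2 g(\gamma)\widetilde{\delta\gamma}+\delta\gamma^\top[\nabla^3 g(\gamma)\gamma']\widetilde{\delta\gamma}+\delta\gamma^\top\nabla^2 g(\gamma)\widetilde{\delta\gamma}',
\end{equation*}
each factor being either in $L^\infty$ (by the embedding) or in $L^2$, so the result is in $L^2$ with continuous dependence on $(\delta\gamma,\widetilde{\delta\gamma})$. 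To show this is the second Fréchet derivative, I would estimate the remainder $DG(\gamma+k)-DG(\gamma)-D^2G(\gamma)[k,\cdot]$ in operator norm: applied to $h$, it equals $[\nabla g(\gamma+k)-\nabla g(\gamma)-\nabla^2 g(\gamma)k]\cdot h$. The $L^2$-estimate uses the second-order Taylor bound $C|k|^2$ since $g\in C^3$; for the time derivative, the same decomposition as above, plus the third-order Taylor bound $|\nabla^2 g(\gamma+k)-\nabla^2 g(\gamma)-\nabla^3 g(\gamma)k|\leq C|k|^2$, controls everything by $O(\|k\|_{H^1}^2\|h\|_{H^1})$. Continuity of $D^2G$ is verified by the same uniform-continuity-on-compacta argument as for $DG$.

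The main technical obstacle is organizing the derivative-in-time computations: each differentiation creates several summands through the product rule, and one has to recognize systematically which factor to place in $L^\infty$ (via the embedding $H^1\hookrightarrow C$) and which to place in $L^2$ so that the appropriate Hölder-type estimate produces the correct $O(\|h\|_{H^1}^2)$ or $O(\|k\|_{H^1}^2\|h\|_{H^1})$ rate. Apart from that bookkeeping, the argument is a standard Nemytskii-type calculation, requiring only $g\in C^3(\mathbb{R}^2)$ for the second derivative and its continuity.
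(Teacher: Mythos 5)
Your argument is correct in substance and reaches the same conclusion, but it takes a more hands-on route than the paper. The paper's proof is essentially a reduction: after observing via the chain rule for absolutely continuous functions that $\nabla(G(\gamma)) = g'(\gamma)\cdot\nabla\gamma$, it notes that it suffices to show that $\gamma \mapsto g(\gamma) \in L^2$ and $\gamma \mapsto g'(\gamma) \in L^\infty$ are twice continuously differentiable, and delegates this to the embedding $H^1(0,T;\R^2) \hookrightarrow L^\infty(0,T;\R^2)$ together with a cited theorem on Nemytskii operators (\cite[Theorem~9]{GKT}). You instead prove the superposition-operator differentiability directly by Taylor expansion and the $L^\infty\times L^2 \to L^2$ product estimates; this is exactly the content hidden in the citation, so your version is self-contained where the paper's is short. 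The bookkeeping you describe (which factor goes into $L^\infty$ via the embedding, which into $L^2$) is precisely the right organizing principle, and your identity for $r_\gamma(h)'$ and the product-rule formula for $\frac{d}{dt}[\delta\gamma^\top\nabla^2 g(\gamma)\widetilde{\delta\gamma}]$ are correct.

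One small inaccuracy: in the second-derivative remainder you invoke the bound $\abs{\nabla^2 g(\gamma+k)-\nabla^2 g(\gamma)-\nabla^3 g(\gamma)k}\leq C\abs{k}^2$, which would require $\nabla^3 g$ to be Lipschitz (i.e.\ $g \in C^{3,1}$), whereas the standing assumption is only $g \in C^3$. This does not break the proof: for Fr\'echet differentiability of $DG$ you only need a $o(\abs{k})$ rate, and uniform continuity of $\nabla^3 g$ on the relevant compact set gives $\abs{\nabla^2 g(\gamma+k)-\nabla^2 g(\gamma)-\nabla^3 g(\gamma)k}\leq \omega(\abs{k})\,\abs{k}$ with a modulus of continuity $\omega$, which paired with $\gamma' \in L^2$ yields the required $o(\norm{k}_{H^1})\norm{h}_{H^1}$ estimate. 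With that adjustment (and the corresponding weakening of the claimed $O(\norm{k}_{H^1}^2\norm{h}_{H^1})$ rate to $o(\norm{k}_{H^1})\norm{h}_{H^1}$ for that particular term), your final remark that only $g \in C^3$ is needed is accurate.
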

\begin{proof}
	It is well known that a function belongs to $H^1(0,T)$
	if and only if it is absolutely continuous with derivative in $L^2(0,T)$.
	In this case, the classical derivative
	(which exists almost everywhere)
	coincides with the weak derivative.
	Consequently,
	the chain rule implies
	\begin{equation*}
		\nabla( G(\gamma) )
		=
		g'(\gamma) . \nabla\gamma
		\qquad\forall \gamma \in H^1(0,T;\R^2),
	\end{equation*}
	where $\nabla$ denotes the weak derivative with respect to the $t$ variable.
	This shows that $G$ is well defined as a mapping from $H^1(0,T;\R^2)$ to $H^1(0,T)$.
	In order to finish the proof,
	it is sufficient to check that the mappings
	\begin{align*}
		H^1(0,T;\R^2) \ni \gamma &\mapsto g(\gamma) \in L^2(\Omega)
		\\
		H^1(0,T;\R^2) \ni \gamma &\mapsto g'(\gamma) \in L^\infty(\Omega)
	\end{align*}
	are twice continuously differentiable.
	This, however,
	follows from the embedding
	$H^1(0,T;\R^2) \hookrightarrow L^\infty(0,T;\R^2)$,
	$g \in C^3(\R^2)$
	and \cite[Theorem~9]{GKT}.
\end{proof}

We are going to verify that the constraint qualification
\eqref{gcqc}
is satisfied in all feasible points of \eqref{eq:constraint_simplified}.
\begin{lemma}
	\label{some_lemma}
	There exists $\delta > 0$
	such that
	\begin{equation}
		\begin{aligned}
		H^{1}(0,T)
		&=
		DG(\bar{\gamma})H^{1}(0,T;\mathbb{R} ^{2})
		\\&\qquad
		-
	\Big\{w\in H^{1}(0,T);w(t)=0,\forall t\in\lbrack0,T]:g(\bar\gamma(t))  >-\frac{\delta}{2}\Big\}.
		\end{aligned}
	\label{18cq}%
\end{equation}
holds for all feasible $\bar{\gamma}\in H^{1}(0,T;\mathbb{R} ^{2})$, i.e. $G(\bar\gamma)  \in K$.
\end{lemma}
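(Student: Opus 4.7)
The plan is to construct, for an arbitrary $u\in H^{1}(0,T)$, an element $\delta\gamma\in H^{1}(0,T;\R^{2})$ such that the residual $w:=DG(\bar\gamma)\delta\gamma-u$ vanishes identically on the active set $\{t:g(\bar\gamma(t))>-\delta/2\}$. Since $DG(\bar\gamma)\delta\gamma=\nabla g(\bar\gamma)\cdot\delta\gamma$ by \eqref{0ba}, this amounts to producing a pointwise right inverse for $\nabla g$ on that active set and extending it smoothly to zero on the complementary region.

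I would first fix $\delta$ using only the geometry of $\Gamma_{S}$: because $\Gamma_{S}$ is compact, $\partial\Gamma_{S}=\{g=0\}$ is compact, and $\nabla g$ is continuous with $\nabla g\neq 0$ on $\partial\Gamma_{S}$, a standard compactness argument yields constants $\delta>0$ and $c>0$, depending only on $g$, such that
\[
|\nabla g(x)|\geq c \quad\text{for every }x\in\Gamma_{S}\text{ with }g(x)\geq -\delta.
\]
Since this $\delta$ depends only on $g$, it serves uniformly for all feasible $\bar\gamma$, as the statement requires. Next, pick a smooth cutoff $\eta\in C^{\infty}(\R;[0,1])$ with $\eta\equiv 1$ on $[-\delta/2,\infty)$ and $\eta\equiv 0$ on $(-\infty,-\delta]$, and define
\[
F\colon\Gamma_{S}\to\R^{2}, \qquad F(x):=\begin{cases} \eta(g(x))\,\dfrac{\nabla g(x)}{|\nabla g(x)|^{2}} & \text{if } g(x)>-\delta,\\ 0 & \text{if } g(x)\leq -\delta.\end{cases}
\]
The lower bound on $|\nabla g|$ throughout the support of $\eta\circ g$ makes $F$ smooth on $\{g>-\delta\}$, and the flat vanishing of $\eta$ at $-\delta$ makes $F$ Lipschitz across $\{g=-\delta\}$; by construction $\nabla g(x)\cdot F(x)=\eta(g(x))$ for every $x\in\Gamma_{S}$.

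Given $u\in H^{1}(0,T)$, the candidate is $\delta\gamma(t):=u(t)\,F(\bar\gamma(t))$. Since $F$ is Lipschitz and $\bar\gamma\in H^{1}(0,T;\R^{2})$, the composition $F\circ\bar\gamma$ lies in $H^{1}(0,T;\R^{2})$, and multiplying by $u$ preserves $H^{1}$ because $H^{1}(0,T)\hookrightarrow L^{\infty}(0,T)$ makes $H^{1}(0,T)$ a Banach algebra in one dimension. The identity above then gives
\[
DG(\bar\gamma)\delta\gamma = u\cdot\eta(g\circ\bar\gamma),
\]
so setting $w:=DG(\bar\gamma)\delta\gamma-u=u\bigl(\eta(g\circ\bar\gamma)-1\bigr)$ produces an element of $H^{1}(0,T)$ which vanishes on $\{t:g(\bar\gamma(t))>-\delta/2\}$, because $\eta=1$ on $[-\delta/2,\infty)$. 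This is exactly the decomposition $u=DG(\bar\gamma)\delta\gamma-w$ required by \eqref{18cq}.

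The only delicate point is the Lipschitz regularity of $F$ across the interface $\{g=-\delta\}$; choosing $\eta$ flat enough at $-\delta$ (e.g.\ with $\eta^{(k)}(-\delta)=0$ for $k=0,1$) handles this, since $F$ and its Jacobian then both tend to zero as $g(x)\to -\delta^{+}$ and match the zero extension. Everything else reduces to routine chain- and product-rule arguments in $H^{1}(0,T)$, and the uniformity in $\bar\gamma$ is automatic since $\delta$ and $c$ were chosen from the geometry of $\Gamma_{S}$ alone.
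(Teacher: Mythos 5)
Your proof is correct and follows essentially the same route as the paper's: both hinge on a uniform lower bound $|\nabla g|\ge c$ on the near-active region obtained by compactness, and on the decomposition $u = \nabla g(\bar\gamma)\cdot\bigl[u\,(\text{cutoff})\,\nabla g(\bar\gamma)/|\nabla g(\bar\gamma)|^{2}\bigr] + u\,(1-\text{cutoff})$, with the second summand vanishing where $g(\bar\gamma(t))>-\delta/2$. The only (cosmetic) difference is that you build the cutoff spatially as $\eta\circ g$, whereas the paper constructs a Lipschitz cutoff $\varphi(t)$ directly in time from the positive distance between $\{t: g(\bar\gamma(t))\le-\delta\}$ and $\{t: g(\bar\gamma(t))\ge-\delta/2\}$; your variant has the small virtue of making the uniformity of $\delta$ with respect to $\bar\gamma$ explicit.
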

\begin{proof}
	Owing to the assumptions on $g$,
	there exist
	$\varepsilon, \delta>0$ such that
	$|\nabla g(\bar\gamma(t))  |\geq\varepsilon$ for all
	$t \in [0,T]$ such that $g(\bar\gamma(t))  \geq-\delta$.

	Next, we introduce
	\begin{align*}
		A&:=\{t\in[0,T];(g\circ\bar{\gamma})(t)\leq-\delta\},
		&
		B&:=\Big\{t\in[0,T];(g\circ\bar{\gamma})(t)\geq-\frac{\delta}{2}\Big\},
	\end{align*}
	which are
	closed subsets of the time interval $[0,T]$.
	Due to the uniform continuity of $g \circ \bar\gamma$ on $[0,T]$,
	the sets $A$ and $B$ have a positive distance.
	Consequently,
	we can choose a Lipschitz function $\varphi : [0,T] \to [0,1]$
	such that
	$\varphi = 0$ on $A$
	and
	$\varphi = 1$ on $B$.

	For an arbitrary $v \in H^1(0,T)$,
	we write
	\begin{equation*}%
		v(t)
		=
		v(t)\varphi(t)+v(t)(1-\varphi(t))
		=
		\nabla g(\bar{\gamma}(t)  )
		\cdot
		\frac{v(t)\varphi(t)\nabla g(\bar{\gamma}(t)  )}{|\nabla g(\bar{\gamma}(t))|^{2}}
		+
		v(t)(1-\varphi(t)),
	\end{equation*}
	where we interpret the fraction as $0$
	whenever $\nabla g(\bar\gamma(t)) = 0$ (which implies $\varphi(t) = 0$).
	It is clear that $1 - \varphi$ belongs to the right-most set in \eqref{18cq}.
	It is straightforward to check that
		$\frac{v(t)\varphi(t)\nabla g(\bar{\gamma}(t)  )}{|\nabla g(\bar{\gamma}(t))|^{2}}$
	belongs to $H^1(0,T);\R^2)$
	and this finishes the proof.
\end{proof}

\begin{lemma}
\label{ffbd} For each feasible $\bar{\gamma}\in H^{1}(0,T;\mathbb{R} ^{2})$, we have
\begin{equation}
\Big\{w\in H^{1}(0,T);w(t)=0,\forall t\in\lbrack0,T]:g(\bar\gamma(t))  >-\frac{\delta}{2}\Big\}\subset\mathcal{R}_{K}(G(\bar{\gamma
})). \label{23cq}%
\end{equation}
\end{lemma}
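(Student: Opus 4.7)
The plan is to show directly that any $w$ in the left-hand side set can be written as $r(x - G(\bar\gamma))$ for some $r > 0$ and some $x \in K$. Rearranging this identity, the natural candidate is $x := \frac{w}{r} + G(\bar\gamma)$, so the task reduces to choosing $r$ large enough so that (a) $x \in H^1(0,T)$ and (b) $x(t) \leq 0$ for every $t \in [0,T]$.

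The membership $x \in H^1(0,T)$ is immediate, since both $w$ and $G(\bar\gamma) = g \circ \bar\gamma$ lie in $H^1(0,T)$ (the latter by Proposition~\ref{GfD}). For the pointwise inequality, I would split $[0,T]$ into the two regions determined by $\delta$. On $\{t : g(\bar\gamma(t)) > -\delta/2\}$ the hypothesis on $w$ gives $w(t) = 0$, so $x(t) = g(\bar\gamma(t))$, and this is $\leq 0$ because $\bar\gamma$ is feasible, i.e., $G(\bar\gamma) \in K$. On the complementary set $\{t : g(\bar\gamma(t)) \leq -\delta/2\}$, I need $w(t)/r \leq -g(\bar\gamma(t))$, and since $-g(\bar\gamma(t)) \geq \delta/2$ there, it suffices to ensure $w(t)/r \leq \delta/2$ uniformly.

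Using the continuous embedding $H^1(0,T) \hookrightarrow C([0,T])$, the function $w$ is bounded, so one may simply fix
\begin{equation*}
    r := \max\bigl\{1,\ 2\|w\|_{C([0,T])}/\delta\bigr\},
\end{equation*}
which gives $|w(t)/r| \leq \delta/2$ on all of $[0,T]$ and in particular on the second region. With this choice, $x \in K$, and by construction $w = r(x - G(\bar\gamma)) \in \mathcal{R}_K(G(\bar\gamma))$, proving the inclusion~\eqref{23cq}.

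There is no real obstacle here: the argument is just a careful choice of scaling $r$ exploiting the ``safety margin'' $\delta/2$ on the inactive set together with boundedness of $w$. The only point worth mentioning is that the vanishing hypothesis on $w$ is precisely what is needed on the (potentially active) set $\{g(\bar\gamma(t)) > -\delta/2\}$, where $-g(\bar\gamma(t))$ can be arbitrarily small or even negative and no scaling of $w$ would produce feasibility by itself.
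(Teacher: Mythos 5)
Your proposal is correct and follows essentially the same argument as the paper: write $w = r(\psi - G(\bar\gamma))$ with $\psi = w/r + G(\bar\gamma)$, use the vanishing of $w$ on the near-active set $\{t : g(\bar\gamma(t)) > -\delta/2\}$, and absorb $w/r$ into the margin $\delta/2$ on the complement by taking $r \ge 2\norm{w}_\infty/\delta$. The only cosmetic difference is that the paper treats $w = 0$ separately while your $\max\{1,\cdot\}$ handles it in one stroke.
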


\begin{proof}
	We denote by $E$ the set on the left-hand side of \eqref{23cq}.
	We have to prove
that every $w\in E\setminus\{0\}$ can be written in the form $w=r(\psi-
g\circ\bar{\gamma})$ for some $\psi\in K = H^1(0,T)_-$ and some $r>0$. This equality is equivalent to $\psi
=\frac{w}{r}+g\circ\bar{\gamma}$.
Since $\bar{\gamma}$ is feasible, $g\circ
\bar{\gamma}\in K$. From $w\in H^{1}(0,T)\hookrightarrow C([0,T])$ we have
$\left\Vert w\right\Vert _{\infty}<+\infty$.
Moreover, $w(t)=0$ for those
$t\in\lbrack0,T]$ such that $g(\bar\gamma(t))
>-\frac{\delta}{2}$, so that in this case $\psi(t)=(g\circ\bar{\gamma}%
)(t)\leq0$. If $g(\bar\gamma(t))  \leq
-\frac{\delta}{2}$, then $\psi(t)=\frac{w(t)}{r}+(g\circ\bar{\gamma}%
)(t)\leq\frac{\left\Vert w\right\Vert _{\infty}}{r}-\frac{\delta}{2}%
\leq0$ if we choose $r=\frac{2\left\Vert w\right\Vert _{\infty}}{\delta
}.$
\end{proof}

By combining the previous two lemmas,
we obtain that the Robinson-Zowe-Kurcyusz constraint qualification
is satisfied in our setting.

\begin{corollary} \label{constqua}
For each feasible $\bar{\gamma}\in H^{1}(0,T;\mathbb{R} ^{2})$, we have:%
\begin{equation}
H^{1}(0,T)=DG(\bar{\gamma})H^{1}(0,T;\mathbb{R} ^{2})-\mathcal{R}_{K}(G(\bar{\gamma})). \label{25cq}%
\end{equation}

\end{corollary}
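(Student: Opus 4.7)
The statement is an essentially immediate consequence of Lemma~\ref{some_lemma} and Lemma~\ref{ffbd}, so the plan is simply to combine them. The inclusion $\supset$ in \eqref{25cq} is trivial since the right-hand side lives inside $H^1(0,T)$ (or equivalently, take $\delta\gamma = 0$ and use $0 \in \RR_K(G(\bar\gamma))$).

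For the nontrivial inclusion $\subset$, let $v \in H^1(0,T)$ be arbitrary. By Lemma~\ref{some_lemma}, there exist $\delta\gamma \in H^1(0,T;\R^2)$ and $w \in H^1(0,T)$ with $w(t)=0$ on $\{t\in[0,T]: g(\bar\gamma(t)) > -\delta/2\}$ such that
\begin{equation*}
v = DG(\bar\gamma)\delta\gamma - w.
\end{equation*}
By Lemma~\ref{ffbd}, this $w$ belongs to $\RR_K(G(\bar\gamma))$. Therefore $v \in DG(\bar\gamma)H^1(0,T;\R^2) - \RR_K(G(\bar\gamma))$, which is what we wanted.

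There is no real obstacle here: all the analytic work has already been done in the two preceding lemmas (the construction of the cutoff $\varphi$ and the scaling argument with $r = 2\lVert w\rVert_\infty/\delta$). The corollary is just the statement that the decomposition from Lemma~\ref{some_lemma} lies in the slightly larger set on the right-hand side of \eqref{25cq}, which is exactly the Robinson--Zowe--Kurcyusz condition \eqref{gcqc} at $\bar\gamma$ for the simplified constraint $G(\gamma)\in K$.
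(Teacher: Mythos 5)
Your proposal is correct and matches the paper exactly: the paper gives no separate proof of Corollary~\ref{constqua}, stating only that it follows ``by combining the previous two lemmas,'' which is precisely the decomposition-plus-inclusion argument you spell out. Nothing is missing.
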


Next, we check that also the strengthened condition
\eqref{sgcqc}
is valid.

\begin{lemma}
	\label{another_lemma}
For each feasible $\bar{\gamma}\in H^{1}(0,T;\mathbb{R} ^{2})$ and $\bar\lambda\in\mathcal{N}_{K}(G(\bar{\gamma}))$,
we have
\begin{equation}
\Big\{w\in H^{1}(0,T);w(t)=0,\forall t\in\lbrack0,T]:g(\bar\gamma(t))  >-\frac{\delta}{2}\Big\}\subset\mathcal{R}_{K}(G(\bar{\gamma
}))\cap \bar\lambda^\perp. \label{26cq}%
\end{equation}
Consequently, \eqref{sgcqc} is satisfied.
\end{lemma}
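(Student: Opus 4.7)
The plan is to split the proof into two parts. Lemma~\ref{ffbd} already delivers $E \subset \RR_K(G(\bar\gamma))$, where $E$ denotes the subspace on the left-hand side of \eqref{26cq}, so only the orthogonality $E \subset \bar\lambda^\perp$ remains. Once this is in hand, the ``Consequently'' clause is immediate: subtracting $E$ from both sides of the decomposition \eqref{18cq} from Lemma~\ref{some_lemma} writes every $v \in H^1(0,T)$ as $DG(\bar\gamma)\delta\gamma - w$ with $w \in E \subset \RR_K(G(\bar\gamma)) \cap \bar\lambda^\perp$, which is precisely \eqref{sgcqc}.

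To obtain the orthogonality I would first extract the standard complementarity consequences of $\bar\lambda \in \NN_K(G(\bar\gamma))$. Since $K = H^1(0,T)_-$ is a cone, testing the defining inequality of the normal cone at the admissible points $v = 0$ and $v = 2G(\bar\gamma)$ yields
\[
\dual{\bar\lambda}{G(\bar\gamma)}_{H^1(0,T)^\star,H^1(0,T)} = 0,
\]
and then, for every $\psi \in K$, the normal cone inequality reduces to $\dual{\bar\lambda}{\psi} \le 0$. In other words, $\bar\lambda$ is a non-negative functional that annihilates $G(\bar\gamma)$.

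The crucial observation is that $E$ is a linear subspace, hence stable under $w \mapsto -w$. Applying Lemma~\ref{ffbd} to both $w$ and $-w$ produces representations $w = r_1(\psi_1 - G(\bar\gamma))$ and $-w = r_2(\psi_2 - G(\bar\gamma))$ with $r_i > 0$ and $\psi_i \in K$. Using $\dual{\bar\lambda}{G(\bar\gamma)} = 0$ together with the sign $\dual{\bar\lambda}{\psi_i} \le 0$ yields $\dual{\bar\lambda}{w} = r_1 \dual{\bar\lambda}{\psi_1} \le 0$ and symmetrically $-\dual{\bar\lambda}{w} = r_2 \dual{\bar\lambda}{\psi_2} \le 0$, hence $\dual{\bar\lambda}{w} = 0$. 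This proves \eqref{26cq}. I do not anticipate any serious obstacle; the only trick worth flagging is this symmetry of $E$, which upgrades the one-sided sign information coming from the normal cone into the two-sided equality required by the annihilator condition.
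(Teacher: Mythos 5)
Your proof is correct and follows essentially the same route as the paper: both reduce the claim to showing $E\subset\bar\lambda^\perp$ via Lemma~\ref{ffbd}, and both exploit the fact that $E$ is a subspace (so $w$ and $-w$ both lie in $\mathcal{R}_K(G(\bar\gamma))$) to turn the one-sided normal-cone inequality into the equality $\dual{\bar\lambda}{w}=0$. The paper phrases this last step through $\mathcal{N}_K=\mathcal{T}_K^\circ$ while you spell it out via the complementarity $\dual{\bar\lambda}{G(\bar\gamma)}=0$ and the representations from Lemma~\ref{ffbd}; the difference is purely cosmetic.
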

\begin{proof}
In view of Lemma \ref{ffbd}, we have just to check that $\left\langle
\bar\lambda,w\right\rangle _{H^{1}(0,T)^{\ast},H^{1}(0,T)}=0$. By 
\eqref{23cq}, $w, -w \in \mathcal{R}_{K}(G(\bar{\gamma
}))\subset\mathcal{T}_{K}(G(\bar{\gamma}))$.
Consequently, $\bar\lambda%
\in\mathcal{N}_{K}(G(\bar{\gamma}))$
implies
$\left\langle \bar\lambda,w\right\rangle _{H^{1}(0,T)^{\ast
},H^{1}(0,T)}=0$.
\end{proof}

Consequently, the results from Section~\ref{subsec:soc}
can be applied to optimization problems
with the constraint \eqref{eq:constraint_simplified}.

\subsection{\textbf{The general case}}

To be able to apply the general theory recalled in Section~\ref{subsec:soc} to our \textbf{original optimal control problem \eqref{ROCP}}, we must previously extend our reduced cost functional (see Definition \ref{defrcf})
\begin{equation}
\hat{J}_r: H^1(0,T;\mathbb{R}^2)\times [3r,T] \to \mathbb{R}:(\gamma,\tau) \mapsto \hat{J}_r(\gamma,\tau)
\end{equation}
to a $C^2$-mapping from the entire Hilbert space $H^1(0,T;\mathbb{R}^2) \times \mathbb{R}$ into $\mathbb{R}$.
Since there is no source of confusion,
the extended functional is denoted by the same symbol.
\begin{proposition} \label{prop:exte}
The $C^2$-mapping $\hat{J}_r : H^1(0,T;\mathbb{R}^2)\times [3r,T] \to \mathbb{R}$ can be extended to a $C^2$-mapping
$\hat{J}_r : H^1(0,T;\mathbb{R}^2)\times \R \to \mathbb{R}$.
\end{proposition}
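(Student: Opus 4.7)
The plan is to build an explicit $C^2$-extension by Seeley-type reflection in the $\tau$-variable combined with smooth cut-offs. First I would decompose
\begin{equation*}
\hat{J}_r(\gamma,\tau) = J_0(\gamma) + \beta_T\tau + F(\gamma,\tau),
\end{equation*}
where $J_0$ collects the $\tau$-independent terms (the gradient penalty $\tfrac12\int_0^T\!\int_\Omega|\nabla S(\gamma)|^2\,\d x\,\d t$ and the squared $H^1$-seminorm of $\gamma$) and $F$ collects the two running-cost integrals in which $\tau$ appears only through the integration bounds. Then $J_0$ is already $C^2$ on all of $H^1(0,T;\R^2)$ (by the $C^2$-regularity of the state map) and $\beta_T\tau$ is $C^\infty$ on $\R$, so it remains to extend $F\colon H^1(0,T;\R^2)\times[3r,T]\to\R$ to a jointly $C^2$-map on $H^1(0,T;\R^2)\times\R$.

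For the extension across $\tau=T$, I would set, for $\tau\in[T,T+(T-3r)/3]$,
\begin{equation*}
F_+(\gamma,\tau) := 6\,F(\gamma,2T-\tau) - 8\,F(\gamma,3T-2\tau) + 3\,F(\gamma,4T-3\tau),
\end{equation*}
noting that the affine maps $\tau\mapsto 2T-\tau,\,3T-2\tau,\,4T-3\tau$ send the interval $[T,T+(T-3r)/3]$ into $[3r,T]$ (using $r<T/3$), so $F_+$ is well defined and jointly $C^2$ there. The Seeley coefficients $a=(6,-8,3)$ and shifts $b_k=-k$, $k=1,2,3$, satisfy $\sum_k a_k b_k^q = 1$ for $q=0,1,2$; by the chain rule this forces every mixed partial derivative $\partial_\gamma^p\partial_\tau^q F_+$ with $p+q\le 2$ to agree with $\partial_\gamma^p\partial_\tau^q F$ at $\tau=T$. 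A symmetric construction
\begin{equation*}
F_-(\gamma,\tau) := 6\,F(\gamma,6r-\tau) - 8\,F(\gamma,9r-2\tau) + 3\,F(\gamma,12r-3\tau)
\end{equation*}
extends $F$ across $\tau=3r$ on $[3r-(T-3r)/3,3r]$ with the analogous matching properties.

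Finally, choose $C^\infty$-cut-offs $\chi_\pm\colon\R\to[0,1]$ with $\chi_+\equiv 1$ in a neighborhood of $T$, $\mathrm{supp}\,\chi_+\subset[T,T+(T-3r)/3]$, and symmetrically for $\chi_-$, and set
\begin{equation*}
F^{\mathrm{ext}}(\gamma,\tau) := \begin{cases}
\chi_-(\tau)F_-(\gamma,\tau) + (1-\chi_-(\tau))F(\gamma,3r), & \tau\le 3r,\\
F(\gamma,\tau), & \tau\in[3r,T],\\
\chi_+(\tau)F_+(\gamma,\tau) + (1-\chi_+(\tau))F(\gamma,T), & \tau\ge T.
\end{cases}
\end{equation*}
On each open piece $F^{\mathrm{ext}}$ is jointly $C^2$ in $(\gamma,\tau)$ as a sum of products of a $C^\infty$-function of $\tau$ with a $C^2$-function of $(\gamma,\tau)$ or of $\gamma$ alone; at the junctions $\tau=3r$ and $\tau=T$ the Seeley matching identities, together with $\chi_\pm\equiv 1$ there, ensure that all partial derivatives of order at most two are continuous across the interface. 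The required extension is then $\hat{J}_r^{\mathrm{ext}}(\gamma,\tau):=J_0(\gamma)+\beta_T\tau+F^{\mathrm{ext}}(\gamma,\tau)$. The only non-routine step is the verification of \emph{joint} (and not merely separate) $C^2$-regularity at the two junction hyperplanes, but this is precisely what the Seeley coefficient identities $\sum_k a_k b_k^q=1$ for $q=0,1,2$ were designed to guarantee and follows from a direct application of the chain rule.
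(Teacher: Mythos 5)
Your proposal is correct and takes essentially the same route as the paper: both use the Seeley-type reflection with coefficients $(6,-8,3)$ and the reflected arguments $2T-\tau$, $3T-2\tau$, $4T-3\tau$ (resp.\ $6r-\tau$, $9r-2\tau$, $12r-3\tau$), glued to the original functional by smooth cut-offs, the only difference being that the paper reflects the whole of $\hat J_r$ and damps it to $0$ with a single cut-off $\theta$, whereas you split off the $\tau$-independent part and blend toward $F(\gamma,3r)$ and $F(\gamma,T)$. One cosmetic slip: the conditions $\operatorname{supp}\chi_+\subset[T,T+(T-3r)/3]$ and $\chi_+\equiv 1$ in a neighborhood of $T$ are mutually incompatible as stated; what you clearly intend is $\chi_+=1$ on $[T,T+\varepsilon]$ and $\chi_+=0$ near the right end of the strip, which does the job.
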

\begin{proof}
We consider a $C^\infty$ cut-off function
$\theta:\mathbb{R} \rightarrow [0,1] $ such that
\[
\theta(\tau)=
\begin{cases}
0 & \text{for }\tau\in]-\infty,3r-\frac{2}{3}\frac{T-3r}{3}%
]\cup\lbrack T+\frac{2}{3}\frac{T-3r}{3},+\infty\lbrack,\\
1 & \text{for }\tau\in\lbrack3r-\frac{1}{3}\frac{T-3r}{3}%
,T+\frac{1}{3}\frac{T-3r}{3}].
\end{cases}
\]
Then, we extend $\hat J_r$ via
\[
\hat{J}_{r}(\gamma,\tau):=
\begin{cases}
	0 & \text{if }
\tau\in ]-\infty,3r-\frac{2}{3}\frac{T-3r}{3}], \\
	\theta(\tau)\Big(6\hat{J}_{r}(\gamma,6r-\tau)-8\hat{J}_{r}(\gamma,9r-2\tau
	) \\\qquad\qquad +3\hat{J}_{r}(\gamma,12r-3\tau)\Big)
	&\text{if }\tau\in\lbrack3r-\frac{T-3r}%
{3},3r],\\
\theta(\tau)\Big(6\hat{J}_{r}(\gamma,2T-\tau)-8\hat{J}_{r}(\gamma,3T-2\tau
) \\\qquad\qquad +3\hat{J}_{r}(\gamma,4T-3\tau)\Big)
&\text{if }\tau\in\lbrack T,T+\frac{T-3r}{3}],
\\
	0 & \text{if }
\tau\in \lbrack T+\frac{2}{3} \frac{T-3r}{3},+\infty\lbrack.
	\end{cases}
\]
We already know that $\hat J_r$
is $C^2$ on
$H^1(0,T;\mathbb{R}^2)\times [3r,T]$,
see
Section~\ref{sec:prelim}.
It can be easily verified that the extended functional $\hat J_r$
is $C^2$ on
$H^{1}(0,T;\mathbb{R} ^{2})\times\mathbb{R} $.
\end{proof}

Next, we introduce the mapping
\begin{equation}\label{refopbeq1}
\mathcal{G}:H^1(0,T;\mathbb{R}^2) \times \mathbb{R} \rightarrow  H^1(0,T)\times \mathbb{R}:(\gamma,\tau) \mapsto (G(\gamma),\tau)\equiv(g\circ \gamma,\tau)
\end{equation}
and the closed convex subset $\mathcal{K}$ of the real Hilbert space $H^1(0,T)\times \mathbb{R}$ via
\begin{equation}\label{refopbeq2}
\mathcal{K}:=K \times [3r,T]=H^1(0,T)_{-}\times [3r,T]
.
\end{equation} 
By Examples \ref{exa:poly} and Proposition \ref{prop:pps}, $\mathcal{K}$ is a polyhedric set in $H^1(0,T)\times \mathbb{R}$.

The Problem \eqref{OCP}
can now be stated equivalently as
\begin{equation} \label{ourminpb}
	\text{minimize} \  \hat{J}_{r}(\gamma,\tau) \text{ subject to } \mathcal{G}(\gamma,\tau)\in \mathcal{K}.
\end{equation}
The Lagrangian function
associated to our minimization problem \eqref{ourminpb} is given by
\begin{equation}
	\begin{aligned}
		L((\gamma,\tau),(\lambda,\mu))&=\hat{J}_{r}(\gamma,\tau)+\left\langle (\lambda,\mu),\mathcal{G}(\gamma,\tau)\right\rangle=\hat{J}_{r}(\gamma,\tau)
		+\left\langle \lambda,G(\gamma)\right\rangle +\left\langle
		\mu,\tau\right\rangle \\ &=\hat{J}_{r}%
		(\gamma,\tau)+\left\langle \lambda,g\circ\gamma\right\rangle +\left\langle
		\mu,\tau\right\rangle ,
	\end{aligned}
\label{31hcq}%
\end{equation}
for all
$(\gamma,\tau) \in H^{1}(0,T;\mathbb{R}^2)\times \mathbb{R}$ and $(\lambda,\mu) \in (H^{1}(0,T)\times \mathbb{R})^{\star}=H^{1}(0,T)^{\star} \times \mathbb{R}^{\star}.$
 
Let $(\bar{\gamma},\bar{\tau}) \in H^1(0,T;\mathbb{R}^2) \times \mathbb{R}$ be feasible, i.e., $\mathcal{G}(\bar{\gamma},\bar{\tau})\in \mathcal{K}$.
By \eqref{25cq}, we have
\[
H^{1}(0,T)=DG(\bar{\gamma})H^{1}(0,T;\mathbb{R} ^{2})-\mathcal{R}_{K}(G(\bar{\gamma}))
.
\]
Taking into account that $D\mathcal{G}\left(  \bar{\gamma},\bar{\tau
}\right)  =DG(\bar{\gamma})\otimes I_{\mathbb{R} }$
and using
Proposition \ref{rcpr},
it follows
\begin{equation}
H^{1}(0,T)\times\mathbb{R} =D\mathcal{G}\left(  \bar{\gamma},\bar{\tau}\right)  (H^{1}(0,T;\mathbb{R} ^{2})\times\mathbb{R} )-\mathcal{R}_{\mathcal{K}}(\mathcal{G}\left(  \bar{\gamma},\bar{\tau}\right)
).\label{31cq}%
\end{equation}
This shows that a constraint qualification is also satisfied at any feasible
point $\left(  \bar{\gamma},\bar{\tau}\right)  $ for our optimal control
problem \eqref{ourminpb}. By Theorem \ref{exLagMult}, we thus get the existence of Lagrange
multipliers at any feasible point $\left(  \bar{\gamma},\bar{\tau}\right)  $ satisfying the first order conditions  \eqref{varineq}, \eqref{varineq2}.
Thus, there exists thus an ordered pair $\left(  \bar{\lambda},\bar{\mu}\right)
\in\mathcal{N}_{\mathcal{K}}\left(  \mathcal{G}\left(  \bar{\gamma},\bar{\tau
}\right)  \right)=\mathcal{N}_{K}(G(\bar{\gamma}))%
\times\mathcal{N}_{[3r,T]}(\bar{\tau})$ such that $
\hat{J}_{r}^{\prime}\left(  \bar{\gamma},\bar{\tau}\right)
+(G^{\prime}(\bar{\gamma})^{\ast}\bar{\lambda},\bar{\mu})=(0_{H^1(0,T;\mathbb{R}^2)^{\ast}},0_{\mathbb{R} ^{\ast}})$
and this is equivalent to
\begin{subequations}
	\label{31dcq}
	\begin{align}
		\label{31dcq:1}
		D_{\gamma}\hat{J}_{r}\left(  \bar{\gamma}%
		,\bar{\tau}\right)  +G^{\prime}(\bar{\gamma})^{\ast}\bar{\lambda}&=0_{H^1(0,T;\mathbb{R}^2)^{\ast}%
		}\ ,\\
		\label{31dcq:2}
		D_{\tau}\hat{J}_{r}\left(  \bar{\gamma},\bar
		{\tau}\right)  +\bar{\mu}&=0_{\mathbb{R} ^{\ast}}\ .
	\end{align}
\end{subequations}
The fact that $\bar\lambda\in\mathcal{N}_{K}\left(  G(\bar\gamma)  \right)$ implies the following.
\begin{proposition} \label{radmeas}
The multiplier $\bar\lambda$ is a positive Radon measure on the compact interval $[0,T]$.
\end{proposition}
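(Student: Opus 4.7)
The plan is to first show that $\bar\lambda$ is a positive linear form on $H^1(0,T)$, then use the fact that $H^1(0,T)$ contains the constant function $\mathbf{1}$ and is densely embedded in $C([0,T])$ to extend $\bar\lambda$ continuously to $C([0,T])$, and finally invoke the Riesz representation theorem.

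\textbf{Step 1 (positivity).} By definition of the normal cone, $\bar\lambda\in\mathcal{N}_K(G(\bar\gamma))$ means
\begin{equation*}
\dual{\bar\lambda}{v-G(\bar\gamma)}_{H^1(0,T)^\star,H^1(0,T)}\le 0\qquad \forall v\in K.
\end{equation*}
Given any $\varphi\in H^1(0,T)$ with $\varphi\ge 0$, the element $v:=G(\bar\gamma)-\varphi$ still belongs to $K=H^1(0,T)_-$ (since $G(\bar\gamma)\le 0$ and $\varphi\ge 0$). Plugging it into the above inequality yields $\dual{\bar\lambda}{-\varphi}\le 0$, i.e.\ $\dual{\bar\lambda}{\varphi}\ge 0$.

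\textbf{Step 2 (continuity for the sup-norm).} Let $\varphi\in H^1(0,T)$. Since $H^1(0,T)$ is a Banach lattice and contains the constant function $\mathbf{1}$, both $\|\varphi\|_\infty \mathbf{1}\pm\varphi$ belong to $H^1(0,T)$ and are non-negative (using the continuous embedding $H^1(0,T)\hookrightarrow C([0,T])$). Applying Step 1 to each gives
\begin{equation*}
\abs{\dual{\bar\lambda}{\varphi}}\le \|\varphi\|_\infty\,\dual{\bar\lambda}{\mathbf{1}}.
\end{equation*}
Hence $\bar\lambda$ is continuous on $H^1(0,T)$ with respect to the sup-norm induced by $C([0,T])$.

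\textbf{Step 3 (extension and representation).} Since $H^1(0,T)$ is dense in $C([0,T])$ (it contains, for instance, the polynomials), the bound in Step 2 allows us to extend $\bar\lambda$ in a unique way to a bounded linear form $\bar\lambda\in C([0,T])^\star$. Positivity is preserved by continuous extension from a dense subset. The Riesz representation theorem then provides a unique positive regular Borel (Radon) measure, still denoted $\bar\lambda$, on the compact interval $[0,T]$ such that
\begin{equation*}
\dual{\bar\lambda}{\varphi}_{C([0,T])^\star,C([0,T])}=\int_{[0,T]}\varphi\,\d\bar\lambda\qquad \forall \varphi\in C([0,T]).
\end{equation*}

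The main obstacle is really bookkeeping: carefully verifying that the test functions $\|\varphi\|_\infty\mathbf{1}\pm\varphi$ are admissible in the normal-cone inequality (which reduces to their membership in $H^1(0,T)$, ensured by the lattice structure), and then justifying the sup-norm density so that the extension of $\bar\lambda$ to $C([0,T])$ is unambiguous.
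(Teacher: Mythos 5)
Your proposal is correct and follows essentially the same route as the paper: positivity of $\bar\lambda$ on $H^1(0,T)_+$ from the normal-cone condition, sup-norm continuity via testing with $\norm{\varphi}_\infty\mathbf{1}\pm\varphi$ (this is precisely the ``standard argument'' the paper delegates to \cite[Lemma~6.53]{BoSh}), dense extension to $C([0,T])$, and the Riesz representation theorem. The only cosmetic remark is that you do not need any lattice structure on $H^1(0,T)$ for Step 2, since $\norm{\varphi}_\infty\mathbf{1}\pm\varphi$ is just a linear combination of elements of $H^1(0,T)$, with non-negativity read off from the embedding into $C([0,T])$.
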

\begin{proof}
	We have the continuous embedding $H^1(0,T) \hookrightarrow C([0,T])$
	and this embedding has a dense range.
	Due to $K \subset \TT_K(G(\bar\gamma))$,
	one can check that $\bar\lambda$ takes non-negative values on
	$H^{1}(0,T)_+$.
	Via a standard argument (see, e.g., \cite[Lemma~6.53]{BoSh})
	one verifies that $\bar\lambda$ is continuous
	w.r.t.\ the supremum norm.
	Consequently, it can be extended (uniquely)
	to a continuous and non-negative functional on $C([0,T])$.
	The Riesz representation theorem implies the claim,
	see \cite[Theo. 2.14 p.42]{WRud}.
\end{proof}

In conclusion, $\bar{\lambda}$ is a positive Radon measure satisfying $\langle \bar{\lambda},G(\bar{\gamma}\rangle =0$ i.e. $\langle \bar{\lambda},g \circ \bar{\gamma}\rangle =0$ and verifying equation \eqref{31dcq:1}.

Next,
we check that also the strict Robinson-Zowe-Kurcyusz condition is satisfied.
Indeed,
Lemma~\ref{some_lemma}
and Lemma~\ref{another_lemma}
imply
\[
	H^{1}(0,T)=DG(\bar{\gamma})H^{1}(0,T;\mathbb{R} ^{2})-\mathcal{R}_{K}(G(\bar{\gamma}))\cap \bar{\lambda}^\perp
	.
\]
On the other hand,
\begin{equation}\mathbb{R} =\mathbb{R} -\mathcal{R}_{[3r,T]}(\bar{\tau})\cap\bar{\mu}^\perp\label{31fcq}%
\end{equation}
is clear.
Consequently,
\begin{equation}
H^{1}(0,T)\times\mathbb{R} =D\mathcal{G}\left(  \bar{\gamma},\bar{\tau}\right)  (H^{1}(0,T;\mathbb{R} ^{2})\times\mathbb{R} )-\mathcal{R}_{\mathcal{K}}(\mathcal{G}\left(  \bar{\gamma},\bar{\tau}\right)
)\cap(\bar{\lambda},\bar{\mu})^\perp.\label{31gcq}%
\end{equation}
This
shows that the strict Robinson-Zowe-Kurcyusz condition is satisfied at
$\left(  \bar{\gamma},\bar{\tau}\right)  $ with the multiplier $(\bar\lambda, \bar\mu)$.
Consequently, the Lagrange multiplier at $(\bar{\gamma},\bar{\tau})$ is unique.

Finally,
we show that the multiplier $\bar\lambda$ enjoys better regularity.

\begin{proposition}\label{prop:lmsqi}
Let us assume that $(\bar{\gamma},\bar{\tau})$ is a local minimizer of our problem \eqref{ourminpb}
and let us denote by $(\bar{\lambda},\bar{\mu})$ the unique Lagrange multiplier at $(\bar{\gamma},\bar{\tau})$.
Then $\bar{\lambda} \in L^2(0,T).$  
\end{proposition}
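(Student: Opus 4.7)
The plan is to exploit the extra regularity provided by Theorem \ref{regoptraj} in order to rewrite the first-order condition \eqref{31dcq:1} in a form where $\bar\lambda$ acts against smooth functions that are linear images of $H^1$ test functions $\delta\gamma$, and then invert this image on the support of $\bar\lambda$ in an $L^2$-controlled way.

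First, I would plug \eqref{varineq} into \eqref{31dcq:1} to obtain, for every $\delta\gamma \in H^1(0,T;\R^2)$,
\begin{equation*}
    \int_0^T P(t)\cdot\delta\gamma(t)\,\d t
    +\lambda_\gamma\int_0^T \bar\gamma'(t)\cdot\delta\gamma'(t)\,\d t
    +\dual{\bar\lambda}{g'(\bar\gamma)\delta\gamma}_{H^1(0,T)^\star,H^1(0,T)}
    =0,
\end{equation*}
where $P(t):=c_R\int_{\Gamma_1} e^{w(\bar\gamma)(x,t)}(x-\bar\gamma(t))\,p(x,t)\,\d S(x)$ belongs to $L^2(0,T;\R^2)$ (as already argued for $P_\kappa$ in the fourth step of the proof of Theorem \ref{regoptraj}). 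Since Theorem \ref{regoptraj} gives $\bar\gamma\in H^2(0,T;\R^2)$ with $\bar\gamma'(0)=\bar\gamma'(T)=0$, I can integrate by parts in the middle term, yielding
\begin{equation*}
    \dual{\bar\lambda}{g'(\bar\gamma)\delta\gamma}_{H^1(0,T)^\star,H^1(0,T)}
    =\int_0^T Q(t)\cdot\delta\gamma(t)\,\d t,
    \qquad
    Q:=\lambda_\gamma\bar\gamma''-P\in L^2(0,T;\R^2).
\end{equation*}

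Next, I would construct a bounded right inverse of $\delta\gamma\mapsto g'(\bar\gamma)\delta\gamma$ localized around the active set. Complementary slackness (from $\bar\lambda\in \NN_K(G(\bar\gamma))$ with $\bar\lambda\ge 0$ and $G(\bar\gamma)\le 0$) gives $\operatorname{supp}(\bar\lambda)\subset\{t\in[0,T];\,g(\bar\gamma(t))=0\}$. By the standing assumption on $g$ and continuity of $g\circ\bar\gamma$, there is $\varepsilon>0$ with $|\nabla g(\bar\gamma(t))|\ge \varepsilon$ on $\{t;\,g(\bar\gamma(t))\ge -\delta\}$ for a small $\delta>0$, as in Lemma \ref{some_lemma}. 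Picking a Lipschitz cutoff $\varphi:[0,T]\to[0,1]$ with $\varphi\equiv 1$ on $\{g(\bar\gamma)\ge -\delta/2\}$ and $\varphi\equiv 0$ on $\{g(\bar\gamma)\le -\delta\}$, and using that $H^2(0,T)\hookrightarrow C^1([0,T])$ so that $\nabla g(\bar\gamma)\in C^1$ and hence $\varphi\,\nabla g(\bar\gamma)/|\nabla g(\bar\gamma)|^2\in W^{1,\infty}(0,T;\R^2)$, I define, for any $\psi\in H^1(0,T)$,
\begin{equation*}
    \delta\gamma(t):=\varphi(t)\,\psi(t)\,\frac{\nabla g(\bar\gamma(t))}{|\nabla g(\bar\gamma(t))|^2}\in H^1(0,T;\R^2),
\end{equation*}
with $\|\delta\gamma\|_{L^2(0,T;\R^2)}\le (2/\varepsilon)\,\|\psi\|_{L^2(0,T)}$. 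By construction $g'(\bar\gamma)\delta\gamma=\varphi\psi$, and on the support of $\bar\lambda$ one has $\varphi=1$, so
\begin{equation*}
    \dual{\bar\lambda}{\psi}_{H^1(0,T)^\star,H^1(0,T)}
    =\dual{\bar\lambda}{\varphi\psi}
    =\dual{\bar\lambda}{g'(\bar\gamma)\delta\gamma}
    =\int_0^T Q\cdot\delta\gamma\,\d t.
\end{equation*}

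Finally, Cauchy--Schwarz gives $|\dual{\bar\lambda}{\psi}|\le \|Q\|_{L^2}\,\|\delta\gamma\|_{L^2}\le C\,\|\psi\|_{L^2(0,T)}$ for every $\psi\in H^1(0,T)$. Since $H^1(0,T)$ is dense in $L^2(0,T)$, the measure $\bar\lambda$ extends to a continuous linear form on $L^2(0,T)$ and the Riesz representation theorem identifies it with an element of $L^2(0,T)$, as claimed. The main technical point, and the step that requires care, is the construction of the test function $\delta\gamma$ with the two properties that it is truly in $H^1(0,T;\R^2)$ (which is where the $H^2$-regularity of $\bar\gamma$ from Theorem \ref{regoptraj} is crucially used to guarantee $C^1$-regularity of $\nabla g(\bar\gamma)$) and that its $L^2$-norm is controlled by the $L^2$-norm of $\psi$ (which uses the uniform lower bound on $|\nabla g(\bar\gamma)|$ near the active set); every other step is then routine.
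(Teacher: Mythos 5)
Your proof is correct, but it takes a genuinely different route from the paper. The paper re-runs the penalization scheme of Theorem \ref{regoptraj}: the uniform $L^2$ bound on $2\kappa\left[g\circ\tilde\gamma_\kappa\right]_+$ obtained there yields a weak $L^2$ limit $\tilde\lambda$, one passes to the limit in the penalized ODE \eqref{add22}, verifies that $\tilde\lambda\in\mathcal{N}_K(G(\bar\gamma))$ and satisfies \eqref{eqmult2}, and then invokes the \emph{uniqueness} of the Lagrange multiplier (from the strict Robinson--Zowe--Kurcyusz condition) to conclude $\bar\lambda=\tilde\lambda\in L^2(0,T)$. You instead argue a posteriori: taking the conclusion of Theorem \ref{regoptraj} as input, you integrate by parts in the stationarity identity to represent $\langle\bar\lambda,g'(\bar\gamma)\delta\gamma\rangle$ by the $L^2$ density $Q=\lambda_\gamma\bar\gamma''-P$, then invert $\delta\gamma\mapsto g'(\bar\gamma)\delta\gamma$ near the active set with the explicit right inverse $\delta\gamma=\varphi\psi\,\nabla g(\bar\gamma)/|\nabla g(\bar\gamma)|^2$ (the same device as in Lemma \ref{some_lemma}), use $\operatorname{supp}\bar\lambda\subset\{g\circ\bar\gamma=0\}$ to replace $\varphi\psi$ by $\psi$, and conclude by the $L^2$ bound, density of $H^1(0,T)$ in $L^2(0,T)$, and Riesz. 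Your route avoids repeating the limit passage and, notably, does not use uniqueness of the multiplier at all --- it identifies $\bar\lambda$ directly --- so it cleanly reduces the regularity of $\bar\lambda$ to the regularity of $\bar\gamma$; this separation is exactly what is relevant to the open problem for stationary points in the appendix. The paper's route, in exchange, produces the extra information that $\bar\lambda$ is the weak $L^2$ limit of the penalization multipliers, which is recorded after the proof. Two small points: the $H^2$-regularity of $\bar\gamma$ is not actually needed to make your test function lie in $H^1(0,T;\mathbb{R}^2)$ ($\bar\gamma\in H^1$ and $g\in C^2$ already suffice, as Lemma \ref{some_lemma} shows); it is needed only to make $Q$ an $L^2$ function via the integration by parts. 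And in the last step you should say explicitly that the $L^2$ representative and the Radon measure $\bar\lambda$ coincide because both are continuous for the supremum norm and agree on the dense subspace $H^1(0,T)$ of $C([0,T])$.
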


\begin{proof}
	We follow the arguments in the proof of Theorem \ref{regoptraj}.
	We will see that we can pass to the limit in the optimality system
	\eqref{add22}-\eqref{add24} and this will imply the higher regularity of $\bar\lambda$.
By inequality \eqref{add35} in the fourth step of the proof
of Theorem \ref{regoptraj},
we have that for large $\kappa$
\begin{equation}
\left\Vert 2\kappa\left[  g\left(  \tilde{\gamma}_{\kappa}(\cdot)\right)
\right]  _{+}\right\Vert _{L^{2}(0,T)}\lesssim(1+\left\Vert \tilde{\gamma
}_{\kappa}^{\prime\prime}\right\Vert _{L^{2}(0,T;
\mathbb{R}^{2})})
\end{equation}
and by \eqref{add42} of the same proof, the norms  $\left\Vert \tilde{\gamma}_{\kappa}^{\prime\prime
}\right\Vert _{L^{2}(0,T;\mathbb{R} ^{2})}$ are uniformly bounded. This implies that the $L^{2}(0,T)$-norms of
$\left(  2\kappa\left[  g\left(  \tilde{\gamma}_{\kappa}(\cdot)\right)
\right]  _{+}\right)  _{\kappa\geq1}$ are uniformly bounded.
Thus, the sequence
$\left(  2\kappa\left[  g\left(  \tilde{\gamma }_{\kappa}(\cdot)\right)  \right]  _{+}\right)_{\kappa\geq 1}$
possesses  a subsequence
which converges weakly
to some element $\tilde{\lambda}\in L^{2}(0,T)$. We also have seen
at the end of the proof of Theorem \ref{regoptraj} that $\left(  \tilde{\gamma}_{\kappa
}^{\prime\prime}\right)_{\kappa\geq1}$ converges weakly to $\bar{\gamma
}^{\prime\prime}$ in $L^{2}(0,T;\mathbb{R}^{2})$.
Using the fact that $\left(\tilde{\gamma}_{\kappa}\right)_{\kappa\geq 1}$ tends to $\bar{\gamma}$ in $C([0,T];\mathbb{R}^2)$,
we are able to pass to the limit in the differential equation \eqref{add22}, and we obtain
\begin{equation}\label{eqtild}
\lambda_{\gamma}\bar{\gamma}^{\prime\prime}(t)=P_{(\bar{\gamma},\bar{\tau})}(t)+\tilde
{\lambda}(t)\nabla g(\bar\gamma(t)),
\end{equation}
 in $L^{2}(0,T;\mathbb{R}^{2})$, thus for a.e. $t \in [0,T]$, where by \eqref{add20}
\begin{equation} \label{add43}
P_{(\bar{\gamma},\bar{\tau})}(t)=\int_{\Gamma_{1}}c_{R}e^{-\frac{2}{R^{2}}\left\vert
x-\bar{\gamma}(t)\right\vert ^{2}}(x-\bar{\gamma}(t))\ p_{(\bar{\gamma},\bar{\tau})}%
(x,t) \d S(x),
\end{equation}
and
$p_{(\bar{\gamma},\bar{\tau})}$ denotes the adjoint state corresponding to $(\bar{\gamma},\bar{\tau})$ defined by \eqref{varformofAdjpb}.
By \eqref{varineq} and equation \eqref{eqtild}, we obtain by integrating by parts using
$\bar{\gamma}\in H^{2}(0,T;\mathbb{R} ^{2})$ and $\bar{\gamma}^{\prime}(T)=\bar{\gamma}^{\prime}(0)=0_{\mathbb{R} ^{2}}$
\begin{equation} \label{add44}
	\begin{aligned}
D_{\gamma}\hat{J}_{r}\left(  \bar{\gamma},\bar{\tau}\right)  \delta\bar
{\gamma}
&=\int_{0}^{T}P_{(\bar{\gamma},\bar{\tau})}(t) \cdot \delta\bar{\gamma}(t) \d t+\lambda
_{\gamma}\int_{0}^{T}\bar{\gamma}'(t) \cdot \delta\bar{\gamma}'(t) \d t\\
&=\lambda_{\gamma}\int_{0}^{T}\bar{\gamma}^{\prime\prime}(t) \cdot \delta\bar{\gamma
}(t) \d t-\int_{0}^{T}\tilde{\lambda}(t)\nabla g(\bar\gamma(t))
 \cdot \delta\bar{\gamma}(t) \d t
 \\&\qquad+\lambda_{\gamma}\int_{0}^{T}\bar{\gamma}'%
(t) \cdot \delta\bar{\gamma}'(t) \d t\\
&
=-\int_{0}^{T}\tilde{\lambda}(t)\nabla g(\bar\gamma(t))
 \cdot \delta\bar{\gamma}(t) \d t
=-\int_{0}^{T}\tilde{\lambda}(t) g'(\bar\gamma(t)) \delta\bar{\gamma}(t) \d t
 .
	\end{aligned}
\end{equation}
Now, let us identify $G^{\prime}(\bar\gamma)  ^{\ast}%
\tilde{\lambda}$. First, by \eqref{0ba} we have $DG(\bar{\gamma})\delta\bar{\gamma
}= g'(\bar\gamma) \delta\bar\gamma$.
Consequently,
\begin{equation} \label{add45}
\left\langle G^{\prime}(\bar\gamma)  ^{\ast}\tilde{\lambda
},\delta\bar{\gamma}\right\rangle =\left\langle \tilde{\lambda},DG(\bar{\gamma
})\delta\bar{\gamma}\right\rangle =\left\langle \tilde{\lambda},
g'(\bar{\gamma}) \delta\bar{\gamma}\right\rangle _{H^{1}%
(0,T)^{\ast}, H^{1}(0,T)}
\end{equation}
for all $\delta \bar{\gamma} \in H^1(0,T;\mathbb{R}^2).$
By combining equations \eqref{add44} and \eqref{add45}, we have
\begin{equation} \label{eqmult2}
D_{\gamma}\hat{J}_{r}\left(  \bar{\gamma},\bar{\tau}\right)  +G^{\prime
}(\bar\gamma)  ^{\ast}\tilde{\lambda}=0_{H^{1}(0,T;\mathbb{R} ^{2})^{\ast}}.
\end{equation}
On the other hand, the multiplier $\bar\lambda$
is the solution of
\begin{equation} \label{eqmult1}
	\bar\lambda \in \NN_K(G(\bar\gamma)),
	\qquad
D_{\gamma}\hat{J}_{r}\left(  \bar{\gamma},\bar{\tau}\right)  +G^{\prime
}(\bar\gamma)  ^{\ast}\bar{\lambda}=0_{H^{1}(0,T;\mathbb{R} ^{2})^{\ast}}.
\end{equation}
Thus, it remains to show
$\tilde\lambda \in \NN_K(G(\bar\gamma)) = K^\circ \cap G(\bar\gamma)^\perp$,
where the last identity follows from $K$ being a cone.

Since
$\tilde{\lambda}\in L^{2}(0,T)$ is the weak limit in $L^2(0,T)$ of a subsequence of the sequence $\left(  2\kappa\left[  g\left(  \tilde{\gamma
}_{\kappa}(\cdot)\right)  \right]  _{+}\right)  _{\kappa\geq1}$, $\tilde{\lambda}$ is non-negative almost everywhere and thus $\tilde{\lambda} \in K^{\circ}$ since $K=H^1(0,T)_{-}$.
It remains to prove that $\langle \tilde{\lambda},G(\bar{\gamma})\rangle =0$. 

As $G(\bar{\gamma}) \in K$, we know already that $\langle \tilde{\lambda},G(\bar{\gamma})\rangle \leq 0$ so that we must now prove that $\langle \tilde{\lambda},G(\bar{\gamma})\rangle \geq 0$.
We have that
$\tilde{\lambda}$ is the weak limit
of a subsequence
$\left(  2\kappa\left[  g\left(  \tilde{\gamma }_{\kappa_l}(\cdot)\right)  \right]  _{+}\right)_{l\geq 1}$
in
$L^{2}(0,T)\subset H^{1}(0,T)^{\ast}$.
Further,
$( G(\tilde\gamma_\kappa) )_{\kappa \ge 1}$
converges in $L^2(0,T)$
towards
$G(\bar\gamma)$.
Consequently,
\[%
\langle \tilde{\lambda},G(\bar\gamma)  \rangle
=\lim\limits_{l\rightarrow+\infty}\int_{0}^{T}2\kappa_{l}\left[  g(
\tilde{\gamma}_{\kappa_{l}}(t))\right]  _{+}\ g(\tilde\gamma_{\kappa_l}(t))\d t
\ge 0
.
\]%
In conclusion, $\langle \tilde{\lambda},G(\bar\gamma)  \rangle =0$, so that $\tilde{\lambda} \in \mathcal{T}_{K}(G(\bar{\gamma}))^{\circ}.$
With \eqref{eqmult2}, this implies that $(\tilde{\lambda},\bar{\mu})$ is also a Lagrange multiplier at $(\bar{\gamma},\bar{\tau})$ so that by uniqueness of the Lagrange multiplier we get $\bar{\lambda}=\tilde{\lambda}.$
Consequently $\bar{\lambda} \in L^2(0,T).$   
\end{proof}

The above proof shows that
$\bar{\lambda}$ is the weak limit of $( 2\kappa \left[ g\circ  \tilde{\gamma}_{\kappa}  \right]  _{+})_{\kappa \ge 1}$
in $L^{2}(0,T)$ where $(\tilde{\gamma}_{\kappa},\tilde{\tau}_{\kappa})$ is a minimizer
of the functional $\tilde{J}_{\kappa}$ \eqref{add2} on the set $C$ \eqref{add3}. 

\subsection{\textbf{The second derivative is a Legendre form}}
The next goal is to verify
that
if $(\bar \gamma,\bar\tau)$ is a stationary point (see Definition \ref{statpoint}) with Lagrange multiplier $(\bar\lambda,\bar\mu) $,
that the quadratic form associated to the second-derivative of the Lagrangian 
\begin{equation*}
D^2_{(\gamma,\tau)} L(\bar \gamma,\bar\tau,\bar\lambda,\bar\mu): H^1(0,T;\mathbb{R}^2) \times \mathbb{R} \rightarrow \mathbb{R}: (\delta\gamma,\delta\tau) \mapsto D^2_{(\gamma,\tau)} L(\bar \gamma,\bar\tau,\bar\lambda,\bar\mu)[\delta\gamma,\delta\tau]^2
\end{equation*}
is a Legendre form, see Definition \ref{def:legendre}.

\textbf{Notation}: In the following, if $f$ is a $C^{2}$ function from an open set $A$ of a
Banach space $X$ into a Banach space $Y$, for $x\in A$ and $h\in X$, we will
shortly write $D^{2}f(x)[h]^{2}$ for $D^{2}f(x)[h,h]$.

For convenience,
we recall from \eqref{31hcq}
that the Lagrangian is given by
\begin{equation}
L((\gamma,\tau),(\lambda,\mu))=\hat{J}_{r}%
(\gamma,\tau)+\left\langle \lambda,G(\gamma)\right\rangle +\left\langle
\mu,\tau\right\rangle ,\label{31hcq2}%
\end{equation}
where $G:H^1(0,T;\mathbb{R}^2) \to H^1(0,T): \gamma \mapsto g\circ\gamma$, so that by \cite[p.95]{HCar}:
\begin{equation} \label{31hcq2sec}
	D^2_{(\gamma,\tau)} L(\bar \gamma,\bar\tau,\bar\lambda,\bar\mu)[\delta\gamma,\delta\tau]^2=D^2_{(\gamma,\tau)} \hat{J}_{r}(\bar \gamma,\bar\tau)[\delta\gamma,\delta\tau]^2+\left\langle \bar\lambda,D^2 G(\bar\gamma)[\delta\gamma]^2\right\rangle.
\end{equation}
Here,
$(\bar\lambda,\bar\mu)$ denotes the Lagrange multiplier at $(\bar\gamma,\bar\tau)$.

First, we study the term involving $\bar\lambda$.
\begin{lemma} \label{LegForm1}
 The mapping
	\begin{align*}
H^1(0,T;\mathbb{R}^2) \rightarrow \mathbb{R}:\delta\gamma
&\mapsto  \langle \bar\lambda,D^2 G(\bar\gamma)[\delta\gamma]^2\rangle _{H^1(0,T)^{\star},H^1(0,T)}
\\
&\qquad=\int_{[0,T]} \big(D^2 G(\bar\gamma)[\delta\gamma]^2\big) (t)\d\bar \lambda(t) 
	\end{align*}
is sequentially weakly continuous, i.e., sequentially continuous when $H^1(0,T;\mathbb{R}^2)$ is endowed with the topology of the weak convergence.
\end{lemma}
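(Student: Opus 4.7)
The plan is to exploit two facts established earlier: first, by Proposition \ref{GfD}, the second derivative of $G$ has the pointwise representation
\begin{equation*}
\bigl(D^2 G(\bar\gamma)[\delta\gamma]^2\bigr)(t) = \delta\gamma(t)^\top \nabla^2 g(\bar\gamma(t)) \delta\gamma(t),
\end{equation*}
which is a continuous function of $t\in[0,T]$; and second, by Proposition \ref{radmeas}, $\bar\lambda$ is a (finite) positive Radon measure on the compact interval $[0,T]$, so that the duality pairing can indeed be written as an integral against $\d\bar\lambda$.

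Suppose $(\delta\gamma_n)_n$ is a sequence in $H^1(0,T;\mathbb{R}^2)$ with $\delta\gamma_n\weakly \delta\gamma$. By the compact embedding $H^1(0,T;\mathbb{R}^2)\hookrightarrow C([0,T];\mathbb{R}^2)$ already invoked repeatedly in the paper, the sequence converges to $\delta\gamma$ uniformly on $[0,T]$, and in particular is uniformly bounded in $C([0,T];\mathbb{R}^2)$. Since $\bar\gamma\in C([0,T];\mathbb{R}^2)$ has range contained in the compact set $\Gamma_S$, the matrix-valued map $t\mapsto \nabla^2 g(\bar\gamma(t))$ is continuous and bounded on $[0,T]$, call its sup-norm $M$. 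A direct estimate gives
\begin{equation*}
\bigl|\delta\gamma_n(t)^\top \nabla^2 g(\bar\gamma(t))\delta\gamma_n(t) - \delta\gamma(t)^\top \nabla^2 g(\bar\gamma(t))\delta\gamma(t)\bigr|
\le
M\,\|\delta\gamma_n-\delta\gamma\|_\infty\,\bigl(\|\delta\gamma_n\|_\infty+\|\delta\gamma\|_\infty\bigr),
\end{equation*}
so that $D^2 G(\bar\gamma)[\delta\gamma_n]^2 \to D^2 G(\bar\gamma)[\delta\gamma]^2$ uniformly on $[0,T]$.

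Since $\bar\lambda$ is a finite Radon measure on $[0,T]$, integration against $\d\bar\lambda$ is continuous with respect to the supremum norm on $C([0,T])$. Applying this to the uniformly convergent sequence above yields
\begin{equation*}
\int_{[0,T]} \bigl(D^2 G(\bar\gamma)[\delta\gamma_n]^2\bigr)(t)\,\d\bar\lambda(t)
\;\longrightarrow\;
\int_{[0,T]} \bigl(D^2 G(\bar\gamma)[\delta\gamma]^2\bigr)(t)\,\d\bar\lambda(t),
\end{equation*}
which is precisely the sequential weak continuity claimed. There is no real obstacle here beyond unwinding these identifications; the only point worth noting is that the argument uses only that $\bar\lambda$ is a finite measure (positivity is not essential), and that $\nabla^2 g$ restricted to a neighbourhood of the range of $\bar\gamma$ is bounded, both of which are granted by the standing assumptions.
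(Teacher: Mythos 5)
Your proposal is correct and follows essentially the same route as the paper: the pointwise formula for $D^2G(\bar\gamma)[\delta\gamma]^2$ from Proposition \ref{GfD}, the compact embedding $H^1(0,T;\mathbb{R}^2)\hookrightarrow C([0,T];\mathbb{R}^2)$ to upgrade weak convergence to uniform convergence of the quadratic expressions, and the continuity of $\bar\lambda$ as a Radon measure (hence a continuous linear form on $C([0,T])$, via Proposition \ref{radmeas}) to pass to the limit. The only differences are cosmetic: you make the uniform-convergence estimate explicit and observe that positivity of $\bar\lambda$ is not needed, neither of which changes the argument.
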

\begin{proof}
By \eqref{21ba}:
\[
\int_{[0,T]}\big(D^{2}G(\bar\gamma)  [\delta
\gamma]^{2}\big)(t)\d\bar{\lambda}(t)
=\int_{[0,T]}
\delta\gamma(t)^\top \nabla^2 g(\bar\gamma(t)) \delta\gamma(t)
\d\bar{\lambda}(t).
\]
If the sequence $\left(  \delta\gamma^{(n)}\right)  _{n\in\mathbb{N} }$ converges weakly to $\delta\gamma$ in $H^{1}(0,T;\mathbb{R} ^{2})$, then due to the compact embedding from $H^{1}(0,T;\mathbb{R} ^{2})$ into $C([0,T];\mathbb{R} ^{2})$, the sequence $\left(  \delta\gamma^{(n)}\right)  _{n\in\mathbb{N} }$ converges also strongly to $\delta\gamma$ in $C([0,T];\mathbb{R} ^{2})$.
The regularity $g\in C^{3}(\mathbb{R} ^{2})$
implies that
$\nabla^2 g(\bar\gamma) \in C([0,T]; \R^{2 \times 2})$.
Consequently,
$ (\delta\gamma^{(n)})^\top \nabla^2 g(\bar\gamma) \delta\gamma^{(n)} $
converges strongly in $C([0,T])$ towards
$ \delta\gamma^\top \nabla^2 g(\bar\gamma) \delta\gamma $.
Thus $\bar{\lambda}$ being a (positive) Radon measure on $[0,T]$ by Proposition \ref{radmeas}, i.e., a
continuous linear form on $C([0,T])$,
implies
\[
	\int_{[0,T]}\big(D^{2}G(\bar\gamma)  [\delta \gamma^{(n)}]^{2}\big)(t)\d\bar{\lambda}(t)
\to
\int_{[0,T]}\big(D^{2}G(\bar\gamma)  [\delta \gamma]^{2}\big)(t)\d\bar{\lambda}(t)
\]
as $n\rightarrow+\infty$,
where we used again \eqref{21ba}.
\end{proof} 

Next,
we study the second derivative of the reduced objective $\hat J_r$
which appears in \eqref{31hcq2sec}.
Owing to the structure of this second derivative,
see
\eqref{DgghatJr}, \eqref{DtghatJr} and \eqref{sectideriv},
the proof is divided into three lemmas.
\begin{lemma} \label{LegForm2}
	Let $(\bar\gamma,\bar\tau)\in H^1(0,T;\mathbb{R}^2) \times [3r,T]$ be given.
\begin{enumerate}
\item
The mapping
\begin{equation*}
	H^1(0,T;\mathbb{R}^2) \rightarrow \mathbb{R}: \delta\gamma \mapsto D_{\gamma\gamma}  \hat J_r(\bar\gamma,\bar\tau)\cdot[\delta\gamma]^2-\lambda_\gamma | \delta\gamma |_{H^1(0,T;\mathbb{R}^2)}^2
\end{equation*}
is weakly sequentially continuous.
\item
Consequently, the mapping $H^1(0,T;\mathbb{R}^2) \rightarrow \mathbb{R}: \delta\gamma \mapsto D_{\gamma\gamma}  \hat J_r(\bar\gamma,\bar\tau)\cdot[\delta\gamma]^2$ is weakly sequentially lower semicontinuous.
\end{enumerate}
\end{lemma}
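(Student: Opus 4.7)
The plan is to show that, of the four terms in the formula \eqref{DgghatJr} for $D_{\gamma\gamma}\hat J_r(\bar\gamma,\bar\tau)[\delta\gamma,\delta\gamma]$, the first three are weakly sequentially continuous in $\delta\gamma$, so that subtracting the fourth term $\lambda_\gamma|\delta\gamma|^2_{H^1(0,T;\R^2)}$ produces a weakly sequentially continuous functional. Part (2) then follows because the squared seminorm is continuous and convex on a Hilbert space, hence weakly sequentially lower semicontinuous, and a wsc functional plus a wlsc functional is wlsc.

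The key observation is that $\delta\gamma\mapsto v(\bar\gamma,\delta\gamma)=DS(\bar\gamma)\delta\gamma$ is a \emph{compact} linear operator from $H^1(0,T;\R^2)$ to $W(0,T)$. Indeed, by \eqref{diffOfG}, it factors as the composition of the linear map $\delta\gamma\mapsto e^{w(\bar\gamma)}\tilde\gamma(\bar\gamma)\cdot\delta\gamma$ from $H^1(0,T;\R^2)$ into $L^2(\Sigma_1)$ with the continuous operator $\sigma\colon L^2(\Sigma_1)\to W(0,T)$; the first factor is compact thanks to the compact embedding $H^1(0,T;\R^2)\hookrightarrow C([0,T];\R^2)$ and the boundedness of the kernel $e^{w(\bar\gamma)}\tilde\gamma(\bar\gamma)$ on $\bar\Gamma_1\times[0,T]$. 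A similar argument, applied to the boundary source in \eqref{addivbp2}, which depends continuously and quadratically on $\delta\gamma$ in $C([0,T];\R^2)$, shows that $\delta\gamma\mapsto D^2S(\bar\gamma)[\delta\gamma,\delta\gamma]$ is continuous from $H^1(0,T;\R^2)$ equipped with the weak topology into $W(0,T)$ equipped with the strong topology.

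Given $\delta\gamma_n\weakly\delta\gamma$ in $H^1(0,T;\R^2)$, these two facts yield
\begin{equation*}
	v(\bar\gamma,\delta\gamma_n)\to v(\bar\gamma,\delta\gamma)
	\quad\text{and}\quad
	D^2S(\bar\gamma)[\delta\gamma_n]^2\to D^2S(\bar\gamma)[\delta\gamma]^2
	\quad\text{strongly in }W(0,T),
\end{equation*}
and therefore also strongly in $L^2(Q)$ and in $L^2(0,T;H^1(\Omega))$. Passing to the limit in the first three groups of terms of \eqref{DgghatJr} is then routine: in the $L^2(Q)$-type integrals weighted by the fixed functions $S(\bar\gamma)-y_Q$ and $S(\bar\gamma)-y_\Omega$, one uses Cauchy--Schwarz together with the strong convergences above, and in the $\int_0^T\int_\Omega$-integral of $|\nabla v(\bar\gamma,\delta\gamma)|^2$ and $\nabla S(\bar\gamma)\cdot\nabla(D^2S(\bar\gamma)[\delta\gamma]^2)$, the strong $L^2$-convergence of the gradients suffices.

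The main technical point is the compactness of the quadratic map $\delta\gamma\mapsto D^2S(\bar\gamma)[\delta\gamma]^2$; but since it depends on $\delta\gamma$ only through pointwise quadratic expressions involving $\delta\gamma$ in $C([0,T];\R^2)$ and then through the linear continuous solution operator of a linear parabolic IBVP with Robin conditions, the needed compactness is a direct consequence of the compact embedding $H^1(0,T;\R^2)\hookrightarrow C([0,T];\R^2)$. Once (1) is established, (2) follows immediately by writing $D_{\gamma\gamma}\hat J_r(\bar\gamma,\bar\tau)[\delta\gamma]^2$ as the sum of a weakly sequentially continuous functional and $\lambda_\gamma|\delta\gamma|^2_{H^1(0,T;\R^2)}$, which is weakly sequentially lower semicontinuous.
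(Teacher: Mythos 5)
Your proposal is correct and follows essentially the same route as the paper: both isolate the first three groups of terms in \eqref{DgghatJr}, use the compact embedding $H^1(0,T;\R^2)\hookrightarrow C([0,T];\R^2)$ to upgrade weak convergence of $\delta\gamma^n$ to strong convergence of the boundary sources in $L^2(\Sigma_1)$, deduce strong convergence of $v(\bar\gamma,\delta\gamma^n)$ and $D_{\gamma\gamma}S(\bar\gamma)[\delta\gamma^n]^2$ in $W(0,T)$ via \eqref{ivbp} and \eqref{addivbp2}, and then pass to the limit in the integrals; part (2) is handled identically via convexity and continuity of the squared seminorm. Your phrasing in terms of compactness of the (linear, resp.\ quadratic) solution maps is just a repackaging of the paper's argument, not a different proof.
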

\begin{proof}
(1)
Using \eqref{DgghatJr},
we have
\begin{equation}
	\label{eq:second_derivative}
\begin{aligned}
	\MoveEqLeft
	D_{\gamma\gamma}  \hat J_r(\bar\gamma,\bar\tau)[\delta\gamma]^2-\lambda_\gamma | \delta\gamma |_{H^1(0,T;\mathbb{R}^2)}^2
	\\
	&=
	\textstyle
	\frac{\lambda_Q}{r}\int_{\frac{\bar\tau}{2}-\frac{r}{2}}^{\frac{\bar\tau}{2}+\frac{r}{2}}\int_\Omega\Big((S(\bar\gamma)-y_Q)(x,t)(D_{\gamma\gamma}S(\bar\gamma)[\delta\gamma]^2)(x,t)+v(\bar\gamma,\delta\gamma)^2(x,t) \Big) \d x \d t
	\\
	&\quad
	\textstyle
	+\frac{\lambda_\Omega}{r}\int_{\bar\tau -r}^{\bar\tau}\int_\Omega  \Big((S(\bar\gamma)-y_\Omega)(x,t)(D_{\gamma\gamma}S(\bar\gamma)[\delta\gamma]^2)(x,t)+v(\bar\gamma,\delta\gamma)^2(x,t) \Big)  \d x \d t
	\\
	&\quad
	\textstyle
	+\int_0^T\int_\Omega\Big(  |\nabla v(\bar\gamma,\delta\gamma)(x,t)|^2 + \nabla S(\bar\gamma)(x,t).\nabla (D_{\gamma\gamma}S(\bar\gamma)[\delta\gamma]^2)(x,t)\Big)  \d x \d t ,
\end{aligned}
\end{equation}
where
$v(\bar\gamma,\delta\gamma):=DS(\bar\gamma)\delta\gamma$.

Let us consider a sequence $(\delta\gamma^n)_{n \in \mathbb{N}}$ converging weakly to $\delta\gamma$ in $H^1(0,T;\mathbb{R}^2)$.
By the compact embedding from $H^1(0,T;\mathbb{R}^2)$ into $C([0,T];\mathbb{R}^2)$,
$(\delta\gamma^n)_{n \in \mathbb{N}}$ tends strongly to $\delta\gamma$ in $C([0,T];\mathbb{R}^2)$.
The sequence of mappings   
\begin{equation*}
\bar{\Sigma}_{1} \rightarrow \mathbb{R}: (x,t) \mapsto e^{-\frac{2}{R^{2}}\left\vert x-\bar{\gamma}(t)\right\vert ^{2}%
}(x-\bar{\gamma}(t))\cdot\delta\gamma^{\left(  n\right)  }(t)
\end{equation*}
 tends to
\begin{equation*}
\bar{\Sigma}_{1} \rightarrow \mathbb{R}: (x,t) \mapsto e^{-\frac{2}{R^{2}}\left\vert x-\bar{\gamma}(t)\right\vert ^{2}}%
(x-\bar{\gamma}(t))\cdot\delta\gamma(t)
\end{equation*}
in $C(\bar{\Sigma}_{1})$
and thus a fortiori in $L^{2}\left(
\Sigma_{1}\right)  $.
Now, \eqref{diffOfG} and \eqref{ivbp} imply
\begin{equation}
	\label{eq:conv_v}
	v(\bar{\gamma},\delta\gamma^{\left(  n\right)  })
	\to
	v(\bar{\gamma},\delta\gamma)
	\qquad\text{in }
	W(0,T)
	\hookrightarrow L^2(0,T; H^1(\Omega))
	.
\end{equation}

We also have that
the function
$e^{w(\bar\gamma)}(\frac{4}{R^2}\tilde\gamma(\bar\gamma).\delta\gamma^n\tilde\gamma(\bar\gamma).\delta\gamma^n-\delta\gamma^n.\delta\gamma^n)$
converges to
$e^{w(\bar\gamma)}(\frac{4}{R^2}\tilde\gamma(\bar\gamma).\delta\gamma\tilde\gamma(\bar\gamma).\delta\gamma-\delta\gamma.\delta\gamma)$
in $C(\bar{\Sigma}_{1})$
and thus a fortiori in $L^{2}\left( \Sigma_{1}\right).$
Thus by \eqref{addivbp2},
\begin{equation}
	\label{eq:conv_DD}
	D_{\gamma\gamma}S(\bar\gamma)[\delta\gamma^n]^2
	\to
	D_{\gamma\gamma}S(\bar\gamma)[\delta\gamma]^2
	\qquad\text{in }
	W(0,T)
	\hookrightarrow L^2(0,T; H^1(\Omega))
	.
\end{equation}
Using \eqref{eq:conv_v} and \eqref{eq:conv_DD}
is enough
to pass to the limit in the integrals appearing in \eqref{eq:second_derivative}.

(2)
The mapping
$H^1(0,T;\mathbb{R}^2) \rightarrow \mathbb{R}: \delta \gamma \mapsto \lvert \delta\gamma \rvert_{H^1(0,T;\mathbb{R}^2)}^2$
is continuous and convex and thus sequentially weakly lower semicontinuous.
Thus, (2) follows from (1).
\end{proof}
  
\begin{lemma} \label{LegForm3}
The map $H^{1}(0,T;%
\mathbb{R}^{2})\times \mathbb{R}
\ni \left(  \delta\gamma,\delta\tau\right)  \mapsto D_{\gamma\tau}\hat{J}%
_{r}\left(  \bar{\gamma},\bar{\tau}\right)  [\delta\gamma,\delta\tau]$ is
a weakly sequentially continuous function from $H^{1}(0,T;\mathbb{R} ^{2})\times\mathbb{R} $ into $\mathbb{R} $. 
\end{lemma}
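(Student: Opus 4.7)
The plan is to reduce weak sequential continuity on $H^1(0,T;\R^2)\times\R$ to the combination of (a) strong convergence of the real component and (b) strong convergence of the path component in $C([0,T];\R^2)$, which follows from the compact embedding $H^1(0,T;\R^2)\hookrightarrow C([0,T];\R^2)$. The product structure of the expression \eqref{DtghatJr} will then make the passage to the limit routine.

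Concretely, let $(\delta\gamma^{(n)},\delta\tau^{(n)})\weakly (\delta\gamma,\delta\tau)$ in $H^1(0,T;\R^2)\times\R$. Since weak convergence in $\R$ is just convergence, $\delta\tau^{(n)}\to\delta\tau$, and the sequence $(\delta\tau^{(n)})$ is bounded. Also, as already used in the proof of Lemma~\ref{LegForm2}, the compact embedding yields $\delta\gamma^{(n)}\to\delta\gamma$ strongly in $C([0,T];\R^2)$, and therefore by \eqref{eq:conv_v} we have $v(\bar\gamma,\delta\gamma^{(n)})\to v(\bar\gamma,\delta\gamma)$ in $W(0,T)$. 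Combining this with the continuous embedding $W(0,T)\hookrightarrow C([0,T];L^2(\Omega))$, it follows that for every fixed $s\in[0,T]$,
\begin{equation*}
v(\bar\gamma,\delta\gamma^{(n)})(\cdot,s)\to v(\bar\gamma,\delta\gamma)(\cdot,s)\quad\text{in }L^2(\Omega).
\end{equation*}

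Now $S(\bar\gamma)\in W(0,T)\hookrightarrow C([0,T];L^2(\Omega))$ and, by the standing assumptions, $y_Q,y_\Omega\in C([0,T];L^2(\Omega))$, so that $(S(\bar\gamma)-y_Q)(\cdot,s)$ and $(S(\bar\gamma)-y_\Omega)(\cdot,s)$ are well-defined elements of $L^2(\Omega)$ at $s=\tfrac{\bar\tau}{2}\pm\tfrac{r}{2}$ and $s=\bar\tau,\bar\tau-r$. Each of the four spatial $L^2(\Omega)$-inner products appearing in \eqref{DtghatJr} with $v(\bar\gamma,\delta\gamma^{(n)})$ in place of $v(\bar\gamma,\delta\gamma)$ therefore converges to the analogous inner product with $v(\bar\gamma,\delta\gamma)$. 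Multiplying these convergent scalar quantities by the convergent factor $\delta\tau^{(n)}\to\delta\tau$ and summing up yields
\begin{equation*}
D_{\gamma\tau}\hat J_r(\bar\gamma,\bar\tau)[\delta\gamma^{(n)},\delta\tau^{(n)}]\longrightarrow D_{\gamma\tau}\hat J_r(\bar\gamma,\bar\tau)[\delta\gamma,\delta\tau],
\end{equation*}
as required.

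There is really no main obstacle here; the argument is essentially the same compact-embedding plus continuous-dependence-of-$v$ mechanism used in Lemma~\ref{LegForm2}, combined with the point-in-time trace $W(0,T)\hookrightarrow C([0,T];L^2(\Omega))$. The only delicate point worth emphasising is that the time instants $\tfrac{\bar\tau}{2}\pm\tfrac{r}{2}$, $\bar\tau$, and $\bar\tau-r$ are fixed (they do not depend on $n$), so one only needs pointwise-in-$s$ continuity of the trace, which the embedding into $C([0,T];L^2(\Omega))$ supplies for free.
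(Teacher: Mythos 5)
Your proof is correct and follows essentially the same route as the paper's: both reduce the claim to the strong convergence $v(\bar\gamma,\delta\gamma^{(n)})\to v(\bar\gamma,\delta\gamma)$ in $W(0,T)$ established in Lemma~\ref{LegForm2}, then use the embedding $W(0,T)\hookrightarrow C([0,T];L^2(\Omega))$ to pass to the limit in the fixed-time $L^2(\Omega)$ inner products of \eqref{DtghatJr}, together with the trivial convergence of the $\delta\tau$-factor. No gaps.
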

\begin{proof}
	We argue similarly as in the proof of the previous lemma.
Let us consider a sequence $(\delta\gamma^n,\delta\tau^n)_{n \in \mathbb{N}}$
converging weakly to $(\delta\gamma,\delta\tau)$ in $H^1(0,T;\mathbb{R}^2)\times \mathbb{R}$.
As we have seen in the proof of the preceding lemma, this implies that $v(\bar{\gamma},\delta\gamma^{\left(  n\right)})$
converges strongly to $v(\bar{\gamma},\delta\gamma)$ in $W(0,T)$.
Now, we use that $W(0,T)$ is continuously embedded into
into $C([0,T];L^2(\Omega))$
and
this implies the convergence in $L^2(\Omega)$  of
$\big{(}v(\bar\gamma,\delta\gamma^n)(.,s)\big{)}_{n \in \mathbb{N}}$
to
$v(\bar\gamma,\delta\gamma)(.,s)$
for all fixed values of $s \in [0,T]$.
Note that these functions with
$s \in \{ \tau/2 + r/2, \tau/2 - r/2, \tau, \tau - r \}$
appear in \eqref{DtghatJr}.
This allows to pass to the limit with all the terms appearing in \eqref{DtghatJr}.
\end{proof}

\begin{lemma} \label{LegForm4}
The quadratic form $\mathbb{R} \times \mathbb{R}
\rightarrow \mathbb{R}
:\left(  \delta\tau_1,\delta\tau_2\right)  \mapsto D_{\tau\tau}\hat{J}%
_{r}\left(  \bar{\gamma},\bar{\tau}\right)  [\delta\tau_1,\delta\tau_2]$ is
a weakly sequentially continuous function from $\mathbb{R} \times \mathbb{R}$ to $\mathbb{R}$.
\end{lemma}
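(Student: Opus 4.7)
The plan is extremely short because the ambient space is finite-dimensional. Since $\mathbb{R}\times\mathbb{R}$ is finite-dimensional, weak and strong convergence coincide there, so it suffices to check that the map is (jointly) continuous in the ordinary sense.

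From the explicit expression \eqref{sectideriv} for $D_{\tau\tau}\hat J_r$, one reads off that
\[
D_{\tau\tau}\hat J_r(\bar\gamma,\bar\tau)[\delta\tau_1,\delta\tau_2]
= c(\bar\gamma,\bar\tau)\,\delta\tau_1\,\delta\tau_2,
\]
where the coefficient $c(\bar\gamma,\bar\tau)\in\mathbb{R}$ depends only on $\bar\gamma$ and $\bar\tau$, and is the sum of the four $L^2(\Omega)$-inner products of the time traces of $S(\bar\gamma)-y_Q$ and $S(\bar\gamma)-y_\Omega$ with their time derivatives, evaluated at the four instants $\bar\tau/2\pm r/2$, $\bar\tau$, $\bar\tau-r$. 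Thus the first step is simply to invoke \eqref{sectideriv} and isolate this scalar $c(\bar\gamma,\bar\tau)$; the only thing that needs to be noted is that $c(\bar\gamma,\bar\tau)$ is finite, which follows from the regularity recalled in Section~\ref{sec:prelim}, namely $S(\bar\gamma)\in C([0,T];L^2(\Omega))$, $D_tS(\bar\gamma)\in C([\varepsilon,T];L^2(\Omega))$ for every $\varepsilon>0$, together with $y_Q,y_\Omega\in C([0,T];L^2(\Omega))$ and $D_ty_Q, D_ty_\Omega$ continuous with values in $L^2(\Omega)$ on the relevant subintervals.

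The second (and final) step is the trivial observation that the scalar-valued map $(\delta\tau_1,\delta\tau_2)\mapsto c(\bar\gamma,\bar\tau)\,\delta\tau_1\,\delta\tau_2$ on $\mathbb{R}\times\mathbb{R}$ is a polynomial, hence continuous, hence sequentially continuous; and on the finite-dimensional normed space $\mathbb{R}\times\mathbb{R}$ every weakly convergent sequence is strongly convergent, so sequential continuity and sequential weak continuity are the same notion. There is no genuine obstacle here — the only mildly delicate point is to cite the correct regularity assumptions ensuring that each of the four integrals in \eqref{sectideriv} is finite, so that $c(\bar\gamma,\bar\tau)$ is a bona fide real number and the factorization $D_{\tau\tau}\hat J_r(\bar\gamma,\bar\tau)[\delta\tau_1,\delta\tau_2] = c(\bar\gamma,\bar\tau)\delta\tau_1\delta\tau_2$ is justified.
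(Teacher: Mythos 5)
Your proposal is correct and matches the paper's argument, which is simply the observation that weak and strong convergence coincide on $\mathbb{R}$, so the bilinear form $(\delta\tau_1,\delta\tau_2)\mapsto c(\bar\gamma,\bar\tau)\,\delta\tau_1\delta\tau_2$ is trivially weakly sequentially continuous. Your additional remarks on the finiteness of the coefficient via the regularity of $S(\bar\gamma)$ and $D_tS(\bar\gamma)$ are a harmless (and reasonable) elaboration of what the paper leaves implicit.
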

\begin{proof}
	This is clear since strong convergence and weak convergence are equivalent on $\R$.
\end{proof}

Now, we combine the previous four lemmas.
\begin{proposition} \label{itisLegfo}
$(\bar\lambda,\bar\mu)$ denoting the Lagrange multiplier at $(\bar\gamma,\bar\tau)$, the quadratic form 
\begin{equation*}
D^2_{(\gamma,\tau)} L(\bar \gamma,\bar\tau,\bar\lambda,\bar\mu): H^1(0,T;\mathbb{R}^2) \times \mathbb{R} \rightarrow \mathbb{R}: (\delta\gamma,\delta\tau) \mapsto D^2_{(\gamma,\tau)} L(\bar \gamma,\bar\tau,\bar\lambda,\bar\mu)[\delta\gamma,\delta\tau]^2
\end{equation*}
is a Legendre form. 
\end{proposition}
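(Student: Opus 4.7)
The plan is to decompose the quadratic form into a weakly sequentially continuous part and the squared seminorm $\lambda_\gamma |\delta\gamma|_{H^1(0,T;\mathbb{R}^2)}^2$, and then verify both defining properties of a Legendre form (Definition \ref{def:legendre}) by exploiting that the $H^1$-seminorm controls the distinction between weak and strong convergence modulo $L^2$.

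Concretely, using \eqref{31hcq2sec} together with the decomposition
\begin{equation*}
D^2_{(\gamma,\tau)} \hat J_r(\bar\gamma,\bar\tau)[\delta\gamma,\delta\tau]^2
=
D_{\gamma\gamma}\hat J_r(\bar\gamma,\bar\tau)[\delta\gamma]^2
+
2 D_{\gamma\tau}\hat J_r(\bar\gamma,\bar\tau)[\delta\gamma,\delta\tau]
+
D_{\tau\tau}\hat J_r(\bar\gamma,\bar\tau)[\delta\tau]^2,
\end{equation*}
I would write
\begin{equation*}
D^2_{(\gamma,\tau)} L(\bar\gamma,\bar\tau,\bar\lambda,\bar\mu)[\delta\gamma,\delta\tau]^2
=
\lambda_\gamma | \delta\gamma |_{H^1(0,T;\mathbb{R}^2)}^2
+
Q(\delta\gamma,\delta\tau),
\end{equation*}
where $Q$ gathers all the remaining terms. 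By Lemmas \ref{LegForm1}, \ref{LegForm2}(1), \ref{LegForm3}, and \ref{LegForm4}, each summand of $Q$ is a weakly sequentially continuous quadratic form on $H^1(0,T;\mathbb{R}^2)\times\mathbb{R}$, hence so is $Q$.

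For the first condition, weak sequential lower semicontinuity of the whole quadratic form is immediate: $Q$ is weakly sequentially continuous and $\delta\gamma\mapsto|\delta\gamma|_{H^1(0,T;\mathbb{R}^2)}^2$ is convex, continuous, and therefore weakly sequentially lower semicontinuous. Summing yields the required lower semicontinuity.

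For the second condition, suppose $(\delta\gamma_k,\delta\tau_k)\weakly(\delta\gamma,\delta\tau)$ in $H^1(0,T;\mathbb{R}^2)\times\mathbb{R}$ and
\begin{equation*}
D^2_{(\gamma,\tau)}L(\bar\gamma,\bar\tau,\bar\lambda,\bar\mu)[\delta\gamma_k,\delta\tau_k]^2
\to
D^2_{(\gamma,\tau)}L(\bar\gamma,\bar\tau,\bar\lambda,\bar\mu)[\delta\gamma,\delta\tau]^2.
\end{equation*}
Since $Q(\delta\gamma_k,\delta\tau_k)\to Q(\delta\gamma,\delta\tau)$ by weak continuity, subtracting gives $|\delta\gamma_k|_{H^1(0,T;\mathbb{R}^2)}\to|\delta\gamma|_{H^1(0,T;\mathbb{R}^2)}$, that is, $\|\dot\delta\gamma_k\|_{L^2(0,T;\mathbb{R}^2)}\to\|\dot\delta\gamma\|_{L^2(0,T;\mathbb{R}^2)}$. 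Combined with the weak convergence $\dot\delta\gamma_k\weakly\dot\delta\gamma$ in $L^2(0,T;\mathbb{R}^2)$ (inherited from weak convergence in $H^1$), the Radon-Riesz property of Hilbert spaces yields strong convergence $\dot\delta\gamma_k\to\dot\delta\gamma$ in $L^2(0,T;\mathbb{R}^2)$. The compact embedding $H^1(0,T;\mathbb{R}^2)\hookrightarrow L^2(0,T;\mathbb{R}^2)$ gives $\delta\gamma_k\to\delta\gamma$ in $L^2(0,T;\mathbb{R}^2)$. Together these imply $\delta\gamma_k\to\delta\gamma$ strongly in $H^1(0,T;\mathbb{R}^2)$. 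Finally, in $\mathbb{R}$ weak and strong convergence coincide, so $\delta\tau_k\to\delta\tau$ in $\mathbb{R}$. This establishes the strong convergence required by Definition \ref{def:legendre}, completing the proof.

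The routine nature of the argument hinges entirely on the four preceding lemmas, which isolate the only non-weakly-continuous piece of the second derivative, namely the $H^1$-seminorm coming from the Tikhonov term $\lambda_\gamma | \gamma |_{H^1(0,T;\mathbb{R}^2)}^2$ in the cost functional; the Radon-Riesz step is then standard. The only subtlety to double-check is that no hidden non-weakly-continuous contribution lurks in the $\bar\lambda$-term, which is handled by Lemma \ref{LegForm1} via the fact that $\bar\lambda$ acts as a Radon measure on $C([0,T])$ and $D^2G(\bar\gamma)[\delta\gamma]^2$ depends on $\delta\gamma$ through a continuous pointwise evaluation that is compact from $H^1$.
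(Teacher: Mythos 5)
Your proposal is correct and follows essentially the same route as the paper: isolate the coercive term $\lambda_\gamma\abs{\delta\gamma}_{H^1(0,T;\R^2)}^2$, invoke Lemmas \ref{LegForm1}, \ref{LegForm2}, \ref{LegForm3}, \ref{LegForm4} for weak sequential continuity of the remainder, and upgrade weak to strong convergence via norm convergence of the seminorm combined with the compact embedding into $L^2(0,T;\R^2)$. Your explicit appeal to the Radon--Riesz property for the derivatives is just a slightly more detailed phrasing of the paper's final step; there is no substantive difference.
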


\begin{proof}
By the Lemmas \ref{LegForm1}, \ref{LegForm2} (2), \ref{LegForm3}, \ref{LegForm4} and formula  \eqref{31hcq2sec}, it is clear that $D^2_{(\gamma,\tau)} L(\bar \gamma,\bar\tau,\bar\lambda,\bar\mu)$ is sequentially weakly lower semicontinuous. 

Now, still by the Lemmas \ref{LegForm1}, \ref{LegForm3}, \ref{LegForm4}, and Lemma \ref{LegForm2} (1), it follows that the mapping 
\begin{equation*}
H^1(0,T;\mathbb{R}^2) \times \mathbb{R} \rightarrow \mathbb{R}: (\delta\gamma,\delta\tau) \mapsto D^2_{(\gamma,\tau)} L(\bar \gamma,\bar\tau,\bar\lambda,\bar\mu)[\delta\gamma,\delta\tau]^2-\lambda_\gamma | \delta\gamma |_{H^1(0,T;\mathbb{R}^2)}^2
\end{equation*}
is weakly sequentially continuous.
Consider a sequence $([\delta\gamma^n,\delta\tau^n])_{n \in \mathbb{N}}$
which converges weakly to
$[\delta\gamma,\delta\tau]$
in
$H^1(0,T;\mathbb{R}^2) \times \mathbb{R}$
with
$D^2_{(\gamma,\tau)} L(\bar \gamma,\bar\tau,\bar\lambda,\bar\mu)[\delta\gamma^n,\delta\tau^n]^2 \to D^2_{(\gamma,\tau)} L(\bar \gamma,\bar\tau,\bar\lambda,\bar\mu)[\delta\gamma,\delta\tau]^2$ as $n \to +\infty$.
Due to $\lambda_\gamma > 0$,
this gives
$| \delta\gamma^n |_{H^1(0,T;\mathbb{R}^2)}^2 \to | \delta\gamma |_{H^1(0,T;\mathbb{R}^2)}^2$ as $n \to +\infty$.
By the compact embedding from $H^1(0,T;\mathbb{R}^2)$ into $L^2(0,T;\mathbb{R}^2)$,
one has also that $\delta\gamma^n$ tends to $\delta\gamma$ in $L^2(0,T;\mathbb{R}^2)$.
Thus, $\delta\gamma^n$ converges strongly to $\delta\gamma$ in $H^1(0,T;\mathbb{R}^2)$ and $([\delta\gamma^n,\delta\tau^n])_{n \in \mathbb{N}}$ strongly to $[\delta\gamma,\delta\tau]$ in $H^1(0,T;\mathbb{R}^2) \times \mathbb{R}$.
This shows that $D^2_{(\gamma,\tau)} L(\bar \gamma,\bar\tau,\bar\lambda,\bar\mu)$ is a Legendre form. 
\end{proof}

\section{\textbf{No-gap second order optimality conditions in additive manufacturing}}
\label{sec:sec_deriv_Leg_form}

In this section,
we apply the abstract Theorem~\ref{theo:CSN}
to
the optimization problem \eqref{ourminpb},
which is a reformulation of Problem \eqref{OCP}.
Let us briefly check that the assumptions of Theorem~\ref{theo:CSN}
are satisfied.
Clearly,
$H^{1}(0,T;\mathbb{R} ^{2})\times \R $
is a Hilbert space.
The polyhedricity of the set $\mathcal{K}$ given in \eqref{refopbeq2}
follows from Example \ref{exa:poly} and Proposition \ref{prop:pps}.
The strict Robin-Zowe-Kurcyusz condition \eqref{sgcqc}
was verified in \eqref{31gcq}.
Finally,
Proposition~\ref{itisLegfo}
shows that the second derivative of the Lagrangian
is a Legendre form.

First, we formulate the second-order sufficient conditions,
which follow directly form Theorem~\ref{theo:CSN}.

\begin{theorem} \label{condsuff}
Let $\left(  \bar{\gamma},\bar{\tau}\right)  \in H^{1}(0,T;\mathbb{R} ^{2})\times\lbrack3r,T]$ be a feasible point,
i.e., $\bar{\gamma}  \in \Uad$ or, equivalently, $G(\bar\gamma)  \in K$,
see \eqref{eq:setadmcdtsb}, \eqref{defopG}, \eqref{introdeK}.
Let us suppose that $\left(  \bar{\gamma
},\bar{\tau}\right)  $ satisfies the first order necessary conditions \eqref{varineq}
and \eqref{varineq2}. Let us denote by $\left(  \bar{\lambda},\bar{\mu}\right)  $ the
unique Lagrange multiplier at $\left(  \bar{\gamma},\bar{\tau}\right)  $, and
let us assume that
\begin{equation} \label{prdumin}
D^2_{(\gamma,\tau)} L(\bar \gamma,\bar\tau,\bar\lambda,\bar\mu)[\delta\gamma,\delta\tau]^2
=
D^2_{(\gamma,\tau)} \hat{J}_{r}(\bar \gamma,\bar\tau)[\delta\gamma,\delta\tau]^2
+
\left\langle \bar\lambda,D^2 G(\bar\gamma)[\delta\gamma]^2\right\rangle
>0
\end{equation}
for every $\left(  \delta\gamma,\delta\tau\right)  \in H^{1}(0,T;\mathbb{R} ^{2})\times\mathbb{R} $, $\left(  \delta\gamma,\delta\tau\right)  \neq(0,0)$, belonging to the
critical cone
\begin{equation*}
	C(\bar{\gamma},\bar{\tau})
	=
	\set*{
		\begin{aligned}
		(\delta\gamma, \delta\tau) \in H^1(0,T;\R^2) \times \TT_{[3 r, T]}(\bar\tau);\;
		&
		g'(\bar\gamma)\delta\gamma \in \TT_K(g(\bar\gamma)),
		\\
		&
		\hat J_r'(\bar\gamma,\bar\tau)(\delta\gamma,\delta\tau) = 0
		\end{aligned}
	}
\end{equation*}
Then, $\hat{J}_{r}$ satisfies the quadratic growth condition
\begin{equation}
	\label{eq:qgcd}
	\hat J_r(\gamma, \tau)
	\ge
	\hat J_r(\bar\gamma, \bar\tau)
	+
	\frac\alpha2 \parens*{
		\norm*{ \gamma - \bar\gamma}_{H^1(0,T;\R^2)}^2
		+
		\abs*{ \tau - \bar\tau }^2
	}
	\qquad
\end{equation}
for all
$(\gamma,\tau) \in ( \Uad \times [3 r, T]) \cap \bar B( (\bar\gamma, \bar\tau); \varepsilon)$
with some constants $\alpha,\varepsilon > 0$.
In particular, $\hat{J}_{r}$ has a strict
local minimum at $\left(  \bar{\gamma},\bar{\tau}\right)  $.
\end{theorem}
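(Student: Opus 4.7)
The plan is to directly invoke the abstract sufficient optimality condition Theorem~\ref{theo:CSN}(ii) applied to the reformulation \eqref{ourminpb} of our optimal control problem, with $X = H^1(0,T;\mathbb{R}^2) \times \mathbb{R}$, $Y = H^1(0,T) \times \mathbb{R}$, $f = \hat J_r$, $g = \mathcal{G}$ and $K = \mathcal{K}$. The work has already been done: every hypothesis of Theorem~\ref{theo:CSN} has been verified in preceding sections, so the proof reduces to collecting references and matching notations.

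First, I will recall that $X = H^1(0,T;\mathbb{R}^2) \times \mathbb{R}$ is a Hilbert space; that $\mathcal{K} = H^1(0,T)_- \times [3r,T]$ is polyhedric by Example~\ref{exa:poly} together with Proposition~\ref{prop:pps}; that the strict Robinson-Zowe-Kurcyusz condition holds at $(\bar\gamma,\bar\tau)$ by \eqref{31gcq}, yielding in particular that the Lagrange multiplier $(\bar\lambda,\bar\mu)$ at $(\bar\gamma,\bar\tau)$ is unique; and that the quadratic form $D^2_{(\gamma,\tau)} L(\bar\gamma,\bar\tau,\bar\lambda,\bar\mu)$ is a Legendre form by Proposition~\ref{itisLegfo}. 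The hypothesis that $(\bar\gamma,\bar\tau)$ satisfies the first order necessary conditions \eqref{varineq}--\eqref{varineq2}, combined with the existence and uniqueness of a Lagrange multiplier under the (strict) Robinson-Zowe-Kurcyusz condition, guarantees that $(\bar\gamma,\bar\tau,\bar\lambda,\bar\mu)$ is a KKT point in the sense of Definition~\ref{statpoint}.

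Next, I will identify the critical cone from \eqref{ccindmul} for our problem. Using $\mathcal{G}(\gamma,\tau) = (g\circ\gamma,\tau)$ and Proposition~\ref{rcpr} (the tangent cone of a Cartesian product is the product of tangent cones), one finds
\begin{equation*}
    \mathcal{T}_{\mathcal{K}}(\mathcal{G}(\bar\gamma,\bar\tau))
    = \mathcal{T}_K(g\circ\bar\gamma) \times \mathcal{T}_{[3r,T]}(\bar\tau),
\end{equation*}
and by Proposition~\ref{GfD}, $D\mathcal{G}(\bar\gamma,\bar\tau)[\delta\gamma,\delta\tau] = (g'(\bar\gamma)\delta\gamma,\delta\tau)$. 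Combining, the critical cone $C(\bar\gamma,\bar\tau)$ defined via \eqref{ccindmul} coincides exactly with the one stated in the theorem, since $(\bar\gamma,\bar\tau)$ is stationary.

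Finally, assumption \eqref{prdumin} is precisely the hypothesis \eqref{wgcqc2} of Theorem~\ref{theo:CSN}(ii), where the explicit form of $D^2_{(\gamma,\tau)} L(\bar\gamma,\bar\tau,\bar\lambda,\bar\mu)$ was computed in \eqref{31hcq2sec}. Applying Theorem~\ref{theo:CSN}(ii) thus yields constants $\alpha,\varepsilon > 0$ such that the quadratic growth
\begin{equation*}
    \hat J_r(\gamma,\tau) \ge \hat J_r(\bar\gamma,\bar\tau) + \alpha \, \norm{(\gamma,\tau) - (\bar\gamma,\bar\tau)}_{X}^{2}
\end{equation*}
holds for all feasible $(\gamma,\tau)$ in $\bar B((\bar\gamma,\bar\tau);\varepsilon)$. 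Since the norm on $X$ satisfies $\|(\gamma,\tau)\|_X^2 = \|\gamma\|_{H^1(0,T;\mathbb{R}^2)}^2 + |\tau|^2$, redefining $\alpha$ (multiplying by $2$) produces exactly the form \eqref{eq:qgcd}, and the strict local minimality follows immediately. There is no real obstacle: the entire theorem is a direct corollary, and the only thing worth being careful about is the identification of the critical cone through the product structure of $\mathcal{G}$ and $\mathcal{K}$.
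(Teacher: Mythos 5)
Your proof is correct and follows essentially the same route as the paper: the paper states Theorem \ref{condsuff} as a direct consequence of Theorem \ref{theo:CSN}(ii) after verifying, exactly as you do, that $H^1(0,T;\mathbb{R}^2)\times\mathbb{R}$ is a Hilbert space, that $\mathcal{K}$ is polyhedric (Example \ref{exa:poly} and Proposition \ref{prop:pps}), that the strict Robinson--Zowe--Kurcyusz condition holds by \eqref{31gcq}, and that the second derivative of the Lagrangian is a Legendre form by Proposition \ref{itisLegfo}. Your additional care in identifying the critical cone through the product structure of $\mathcal{G}$ and $\mathcal{K}$ and in rescaling the constant $\alpha$ is consistent with what the paper leaves implicit.
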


\begin{remark}
\begin{itemize}
	\item The condition $g'(\bar\gamma)\delta\gamma \in\mathcal{T}_{K}(g(\bar{\gamma}))$
		in the statement of Theorem \ref{condsuff} is equivalent to $\delta\gamma \in \TT_{\Uad}(\bar\gamma)$,
		by \eqref{acq} and Corollary \ref{constqua}. 
\item
	We consider
	the particular case that $\Gamma_S$ is a closed convex domain of class $C^3$ in $\mathbb{R}^2$.
	W.l.o.g.\ we assume that $0 \in \Gamma_S$.
	Then, the Minkowski functional $M$ of is of class $C^3$ on $\mathbb{R}^2 \setminus \set{0}$,
	see \cite[Cor. 6.3.13, p.216]{KraPar}.
	Consequently,
	the function $g : x \mapsto \max(M(x) - 1/2, 0)^{4} - 1/2^{4}$
	is convex and of class $C^3$ on $\R^2$.
	Further, the properties of the Minkowski functional imply $\Gamma_S=\{x \in \mathbb{R}^2; g(x) \leq 0\}$
and $\nabla g(x) \not= 0$ for $x \in \partial\Gamma_S$.
In that case, the convexity of $g$ gives
\begin{equation}
\left\langle \bar\lambda,D^2 G(\bar\gamma)[\delta\gamma]^2\right\rangle
=
\int_{[0,T]}
\delta\gamma(t)^\top \nabla^2 g(\bar\gamma(t)) \delta\gamma(t)
\d\bar{\lambda}(t)\geq 0.
\end{equation}
Consequently,
if
$D^2_{(\gamma,\tau)} \hat{J}_{r}(\bar \gamma,\bar\tau)[\delta\gamma,\delta\tau]^2 > 0$  for every $(\delta\gamma,\delta\tau) \not= (0,0)$
belonging to the critical cone $C(  \bar{\gamma},\bar{\tau})$
then \eqref{prdumin}
is automatically satisfied.
Hence,
Theorem~\ref{condsuff}
improves
Corollary 8.12 in \cite{HNPW}
in
the case that $\Gamma_S$ is a closed regular convex domain of class $C^3$ in $\mathbb{R}^2$.
\end{itemize}
\end{remark}

By Theorem \ref{theo:CSN}, we also have the following necessary condition
with no-gap with respect to the sufficient condition.
\begin{theorem}
Let $(\bar\gamma,\bar\tau)$ be a local minimizer of our minimization problem \ref{ourminpb}. 
Then,
there exists a unique Lagrange multiplier $(\bar\lambda, \bar\mu) \in L^2(0,T) \times \R$
such that
$\bar\lambda \in \NN_K(g(\bar\gamma))$, $\bar\mu \in \NN_{[3 r, T]}(\bar \tau)$
and
\begin{equation}
	\label{eq:FONC}
	D_\gamma \hat J_r(\bar\gamma, \bar\tau)
	+
	\bar\lambda g'(\bar\gamma)
	=
	0,
	\quad\text{and}\quad
	D_\tau \hat J_r(\bar\gamma, \bar\tau)
	+ \bar\mu
	= 0
	.
\end{equation}
Further,
\begin{equation}
D^2_{(\gamma,\tau)} L(\bar \gamma,\bar\tau,\bar\lambda,\bar\mu)[\delta\gamma,\delta\tau]^2 \geq 0
\end{equation}
for every $\left(  \delta\gamma,\delta\tau\right)  \in H^{1}(0,T;\mathbb{R}^{2})\times\mathbb{R}
$ belonging to the critical cone $C\left(  \bar{\gamma},\bar{\tau}\right)$.
\end{theorem}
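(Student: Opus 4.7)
The plan is to assemble the statement directly from Theorem~\ref{theo:CSN} applied to the reformulation \eqref{ourminpb}, using the pieces already established in Sections \ref{subsec:soc} and \ref{sec:soc}. First I would record that all four hypotheses of Theorem~\ref{theo:CSN} are in place: $H^{1}(0,T;\R^{2})\times\R$ is a Hilbert space; the set $\mathcal{K}=H^1(0,T)_-\times[3r,T]$ is polyhedric by Example~\ref{exa:poly} together with Proposition~\ref{prop:pps}; the strict Robinson-Zowe-Kurcyusz condition \eqref{31gcq} at $(\bar\gamma,\bar\tau)$ holds for the unique multiplier; and the second derivative of the Lagrangian is a Legendre form by Proposition~\ref{itisLegfo}. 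This is just a bookkeeping step, but it has to be done first because all subsequent claims rely on it.

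Next I would derive existence and uniqueness of the multiplier. Since $(\bar\gamma,\bar\tau)$ is a local minimizer of $\hat J_r$ on $\mathcal{F}=\Uad\times[3r,T]$, Proposition~\ref{implfoc} gives $\langle \hat J_r'(\bar\gamma,\bar\tau),d\rangle \geq 0$ for all $d\in\TT_{\mathcal{F}}(\bar\gamma,\bar\tau)$. Combining the Robinson-Zowe-Kurcyusz condition \eqref{31cq} with Theorem~\ref{exLagMult} produces a Lagrange multiplier, and the strict condition \eqref{31gcq} together with \cite[Theorem 2.2]{SHA} gives its uniqueness. Proposition~\ref{rcpr} decomposes $\mathcal{N}_{\mathcal{K}}(\mathcal{G}(\bar\gamma,\bar\tau))=\NN_K(G(\bar\gamma))\times\NN_{[3r,T]}(\bar\tau)$, producing the required membership conditions, and the stationarity identity \eqref{31dcq} reproduces \eqref{eq:FONC}. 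The $L^{2}$-regularity $\bar\lambda\in L^{2}(0,T)$ is exactly Proposition~\ref{prop:lmsqi}, so this part of the statement is already in hand.

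Finally, I would invoke part (i) of Theorem~\ref{theo:CSN}, which directly yields
\begin{equation*}
D^2_{(\gamma,\tau)} L(\bar\gamma,\bar\tau,\bar\lambda,\bar\mu)[\delta\gamma,\delta\tau]^2 \geq 0
\end{equation*}
for every $(\delta\gamma,\delta\tau)$ in the critical cone $C(\bar\gamma,\bar\tau)$. The only small check to carry out is that the critical cone described in the theorem's statement coincides with the one defined abstractly in \eqref{genccindmul}: by \eqref{acq} and Corollary~\ref{constqua}, the condition $\mathcal{G}'(\bar\gamma,\bar\tau)(\delta\gamma,\delta\tau)\in\TT_{\mathcal{K}}(\mathcal{G}(\bar\gamma,\bar\tau))$ translates via Proposition~\ref{rcpr} into $g'(\bar\gamma)\delta\gamma\in\TT_K(G(\bar\gamma))$ and $\delta\tau\in\TT_{[3r,T]}(\bar\tau)$, and the equality $\hat J_r'(\bar\gamma,\bar\tau)(\delta\gamma,\delta\tau)=0$ is \eqref{ccindmul} for a stationary point.

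I do not anticipate any genuine obstacle here, since the heavy lifting—verifying polyhedricity, the strict constraint qualification, the Legendre form property, and the $L^{2}$-regularity of $\bar\lambda$—has been done earlier. The proof is essentially a three-line invocation of Theorem~\ref{theo:CSN}(i), the only care needed being a clean identification of the critical cone and a consistent statement of the first-order system.
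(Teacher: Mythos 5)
Your proposal is correct and follows essentially the same route as the paper: verify the hypotheses of Theorem~\ref{theo:CSN} (Hilbert space, polyhedricity of $\mathcal{K}$, strict Robinson--Zowe--Kurcyusz condition \eqref{31gcq}, Legendre form via Proposition~\ref{itisLegfo}), obtain existence and uniqueness of the multiplier from Theorem~\ref{exLagMult}, Corollary~\ref{constqua} and \eqref{31dcq}, get $\bar\lambda\in L^2(0,T)$ from Proposition~\ref{prop:lmsqi}, and conclude the second-order inequality from Theorem~\ref{theo:CSN}(i). Your additional check that the concrete critical cone matches the abstract one via \eqref{acq} and Proposition~\ref{rcpr} is a sensible explicit step that the paper leaves implicit.
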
 
\begin{proof}
	The first-order condition \eqref{eq:FONC}
	follows from Theorem~\ref{exLagMult}
	and Corollary~\ref{constqua},
	see also \eqref{31dcq}.
	The uniqueness of the Lagrange multiplier $(\bar\lambda,\bar\mu)$ follows from \eqref{31gcq} and $\bar\lambda \in L^2(0,T)$ 
	was shown in Proposition \ref{prop:lmsqi}.
	The second-order condition is a consequence of
	Theorem \ref{theo:CSN}.
\end{proof}

\section{\textbf{Appendix}}
\label{sec:regularity_stationary}

We have proved in section \ref{sec:regularity}, that if $(\bar\gamma, \bar\tau)$ is a local minimizer of our minimization Problem \ref{ourminpb}
that $\bar{\gamma} \in H^2(0,T;\mathbb{R}^2)$ and $\bar{\gamma}'(0)=\bar{\gamma}'(T)=0_{\mathbb{R}^2}$.
Is this still true for stationary points $(\bar\gamma, \bar\tau)$ in the sense of Definition \ref{statpoint} of our minimization Problem \ref{ourminpb}?

The following lemma shows
that
every stationary point $(\bar\gamma, \bar\tau)$ of our minimization Problem \ref{ourminpb}
is
a local minimizer of
the modified reduced cost functional
\begin{equation} \label{MRF}
	\tilde J_M(\gamma,\tau)
	:=
	\hat J_r(\gamma,\tau)
	+
	\frac{M}{2} \parens*{
		\norm{\gamma - \bar\gamma}_{H^1(0,T; \R^2)}^2
		+
		\abs{\tau - \bar\tau}^2
	}
\end{equation}
if $M > 0$ is chosen large enough.

\begin{lemma}
	\label{lem:stat_mod}
	Let $(\bar\gamma,\bar\tau)$ be a stationary point in the sense of Definition \ref{statpoint} of our minimization Problem \ref{ourminpb}.
	Then, for $M>0$ large enough, there exist constants $\varepsilon,\alpha > 0$
	such that the quadratic growth condition
	\begin{equation*}
		\tilde J_M(\gamma, \tau)
		-
		\tilde J_M(\bar\gamma, \bar\tau)
		\ge
		\frac{\alpha}{2} \parens*{
			\norm{\gamma - \bar\gamma}_{H^1(0,T; \R^2)}^2
			+
			\abs{\tau - \bar\tau}^2
		}
	\end{equation*}
	for all
	$(\gamma,\tau) \in \parens{\Uad\times [3r,T]} \cap B_\varepsilon \big((\bar\gamma,\bar\tau)\big)$
	is satisfied.
\end{lemma}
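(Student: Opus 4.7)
The plan is to reduce Lemma~\ref{lem:stat_mod} to a direct application of the sufficient condition of Theorem~\ref{theo:CSN}(ii) to the modified optimization problem obtained by replacing $\hat J_r$ with $\tilde J_M$ (while keeping the same constraint $\mathcal{G}(\gamma,\tau)\in\mathcal{K}$). The first step is to observe that the perturbation $\frac{M}{2}\bigl(\|\gamma-\bar\gamma\|_{H^1(0,T;\R^2)}^2+|\tau-\bar\tau|^2\bigr)$ vanishes together with its first derivative at $(\bar\gamma,\bar\tau)$. Consequently, the first-order KKT conditions for the modified problem are satisfied at $(\bar\gamma,\bar\tau)$ with the \emph{same} Lagrange multiplier $(\bar\lambda,\bar\mu)$ as for the original problem, and the critical cone $C(\bar\gamma,\bar\tau)$ is identical.

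The second step is to verify that the hypotheses of Theorem~\ref{theo:CSN} remain valid for the modified problem. The ambient space, the set $\mathcal{K}$, and the constraint mapping $\mathcal{G}$ are unchanged, so the Hilbert space structure, the polyhedricity of $\mathcal{K}$ and the strict Robinson--Zowe--Kurcyusz condition \eqref{31gcq} are still in force. The second derivative of the modified Lagrangian at $(\bar\gamma,\bar\tau,\bar\lambda,\bar\mu)$ is
\begin{equation*}
    D^2 L_M[\delta\gamma,\delta\tau]^2
    =
    D^2_{(\gamma,\tau)} L(\bar\gamma,\bar\tau,\bar\lambda,\bar\mu)[\delta\gamma,\delta\tau]^2
    +
    M\bigl(\|\delta\gamma\|_{H^1(0,T;\R^2)}^2+|\delta\tau|^2\bigr).
\end{equation*}
By Proposition~\ref{itisLegfo}, the first summand is a Legendre form; the second is trivially one on the Hilbert space $H^1(0,T;\R^2)\times\R$, and a quick subsequence argument shows that the sum of a Legendre form and a positive multiple of the squared inner-product norm is again a Legendre form.

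The third step is to arrange that the strict sufficient condition \eqref{wgcqc2} holds for $M$ large. Since $D^2_{(\gamma,\tau)} L(\bar\gamma,\bar\tau,\bar\lambda,\bar\mu)$ is a bounded symmetric bilinear form on the Hilbert space $H^1(0,T;\R^2)\times\R$, there exists a constant $C>0$ with
\begin{equation*}
    \bigl|D^2_{(\gamma,\tau)} L(\bar\gamma,\bar\tau,\bar\lambda,\bar\mu)[h,h]\bigr|
    \leq C\,\|h\|_{H^1(0,T;\R^2)\times\R}^2
\end{equation*}
for every $h=(\delta\gamma,\delta\tau)$. Choosing any $M>C$ yields
\begin{equation*}
    D^2 L_M[h,h]\geq (M-C)\,\|h\|_{H^1(0,T;\R^2)\times\R}^2>0
\end{equation*}
for all $h\ne 0$, and in particular on $C(\bar\gamma,\bar\tau)\setminus\{0\}$. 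Applying Theorem~\ref{theo:CSN}(ii) to the modified problem now delivers exactly the quadratic growth property claimed in Lemma~\ref{lem:stat_mod}.

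The argument is entirely conceptual; no delicate estimate enters. The only point that requires a brief verification rather than a one-line citation is the stability of the Legendre-form property under addition of a positive multiple of the inner product, which follows from a standard subsequence argument using the weak sequential lower semicontinuity of both summands and the fact that weak convergence together with convergence of norms implies strong convergence in a Hilbert space.
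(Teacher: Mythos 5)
Your proposal is correct and follows exactly the route the paper takes: the paper disposes of Lemma~\ref{lem:stat_mod} in one sentence as ``a straightforward application of Theorem~\ref{condsuff}, since the additional term in \eqref{MRF} implies that the Hessian of the associated Lagrangian is coercive for $M>0$ large enough,'' and your write-up simply supplies the details of that application (same multiplier and critical cone since the perturbation has vanishing first derivative at $(\bar\gamma,\bar\tau)$, unchanged constraint qualification and polyhedricity, Legendre-form stability under adding $M\norm{\cdot}^2$, and coercivity for $M$ exceeding the norm of the original Hessian). No discrepancies to report.
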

This lemma is a straightforward application of Theorem \ref{condsuff},
since the additional term in \eqref{MRF}
implies that the Hessian of the associated Lagrangian is coercive for $M > 0$ large enough.

Now, we can try to repeat the arguments from the proof of Theorem \ref{regoptraj}.
For a regularization parameter $\kappa > 0$, we define the regularized and modified reduced cost functional
\begin{equation*}
	\tilde J_{M,\kappa}(\gamma, \tau)
	:=
	\tilde J_{M}(\gamma, \tau)
	+
	\kappa \int_0^T [g\circ\gamma]_{+}^2(t)  \d t.
\end{equation*}
From $D_{\gamma}\tilde{J}_{M,\kappa}(\tilde\gamma_{\kappa},\tilde\tau_{\kappa})\delta\gamma=0$, for every $\delta\gamma \in H^1(0,T;\mathbb{R}^2)$,
we arrive by \eqref{add17}, \eqref{add20}, \eqref{add21} at the weak formulation of the ODE
\begin{equation} \label{ODEwithM}
	P_\kappa
	- (\lambda_\gamma \tilde\gamma_\kappa + M (\tilde\gamma_\kappa - \bar\gamma))''
	+ M (\tilde\gamma_\kappa - \bar\gamma)
	+ 2\kappa\left[  g\circ\tilde{\gamma}_{\kappa}\right]  _{+}(\nabla g)\circ\tilde{\gamma}_{\kappa} = 0
	.
\end{equation}
Thus
$\lambda_\gamma \tilde\gamma_\kappa + M (\tilde\gamma_\kappa - \bar\gamma) \in H^2(0,T;\R^2).$

However,
we were not able
to deduce from 
$(\lambda_\gamma \gamma_\kappa + M (\gamma_\kappa - \bar\gamma))'' \in L^2(0,T;\R^2)$ that $\bar\gamma$ itself belongs to $H^2(0,T;\R^2)$ by using the weak formulation of the ODE \eqref{ODEwithM} like in the proof of Theorem \ref{regoptraj}.

The important difference between the ODE \eqref{ODEwithM} and the ODE \eqref{add22}
is the additional term $-M(\tilde{\gamma}_{\kappa} - \bar\gamma)''$.
This term is due to the fact that in the modified reduced cost functional \eqref{MRF},
we have added $\frac{M}{2}\lVert \gamma - \bar\gamma \rVert _{H^1(0,T;\mathbb{R}^2)}^2$
instead of $\frac{M}{2}\lVert \gamma - \bar\gamma \rVert _{L^2(0,T;\mathbb{R}^2)}^2$ like in \eqref{add2}.
However,
but this difference is essential in order to get the coercivity of the Hessian of the Lagrangian
in the proof of Lemma~\ref{lem:stat_mod} for $M$ large enough.  

Thus, the strategy of the modified reduced cost functional does not seem to be able to give a positive answer to our question on the regularity of the trajectory in the case of stationary points for our minimization Problem  \ref{ourminpb}.

This regularity question remains an open problem.

\end{document}